\definecolor{Alecolor}{rgb}{0,0.5,0.5}
\numberwithin{equation}{section} \pagestyle{plain}
\newtheorem{theorem}{Theorem}[section]
\newtheorem{corollary}{Corollary}[section]
\newtheorem{lemma}{Lemma}[section]
\newtheorem{proposition}{Proposition}[section]
\newtheorem{definition}{Definition}[section]
\newtheorem{remark}{Remark}
\newtheorem{example}{Example}
\def\xx{\boldsymbol{x}}
\def\ee{\boldsymbol{e}}
\def\aa{\boldsymbol{a}}
\def\KK{\boldsymbol{K}}
\def\RRR{\boldsymbol{R}}
\def\PP{\mathbb{P}}
\def\ww{\boldsymbol{w}}
\def\ii{\boldsymbol{i}}
\def\II{\boldsymbol{I}}
\def\HHH{\mathcal{H}}
\def\EEE{\mathcal{E}}
\def\PPP{\mathcal{P}}
\def\AAA{\mathcal{A}}
\def\pp{\boldsymbol{p}}
\def\ppi{\boldsymbol{\pi}}
\def\kk{\boldsymbol{k}}
\def\RR{\mathbb{R}}
\def\BBB{\mathcal{B}}
\def\DDD{\mathcal{D}}
\def\zz{\boldsymbol{z}}
\def\JJ{\boldsymbol{J}}
\def\DDD{\mathcal{D}_d}
\newcommand{\subscript}[2]{$#1 _ #2$}
\begin{document}

	\title{Extremal negative dependence and the strongly Rayleigh property}
	\author[1]{Hélène Cossette}
	\author[1]{Etienne Marceau}
	\author[2]{Alessandro Mutti}
	\author[2]{Patrizia Semeraro}
	
	\affil[1]{\textit{\'Ecole d'actuariat, Université Laval}}
	\affil[2]{\textit{Department of Mathematical Sciences, Politecnico di Torino}}
	
	\maketitle

	\begin{abstract}
		We provide a geometrical characterization of extremal negative dependence as a convex polytope in the simplex of multidimensional Bernoulli distributions, and we prove that it is an antichain that satisfies some minimality conditions with respect to the strongest negative dependence orders.  
		We study the strongly Rayleigh property within this class and explicitly find a distribution that satisfies this property by maximizing the entropy. 
		Furthermore, we construct a chain for the supermodular order starting from extremal negative dependence to independence by mixing the maximum entropy strongly Rayleigh distribution with independence.
	\end{abstract}

	\noindent \textbf{Keywords}: multidimensional Bernoulli distributions; Negative Dependence; Negative Association; Strongly Rayleigh; Conditional Bernoulli distributions; Joint mixability; $\Sigma$-countermonotonicity. 
	
	\noindent \textbf{MSC2020 subject classification}: Primary 60E05, 60E15; secondary 62H05. 
	
	\section{Introduction}
	
	Negative dependence properties of multidimensional Bernoulli distributions are important in many areas of probability (e.g. \cite{boutsikas2000bound}, \cite{hermon2023modified}), statistics (e.g. \cite{branden2012negative}, \cite{gerber2019negative}), combinatorics, stochastic processes, statistical physics \cite{duminil2016quantitative} as well as in many applications, see \cite{borcea2009negative}, \cite{pemantle2000towards}, and references therein. 
	However,  the theory of negative dependence is more challenging than that of positive dependence, and there are still open problems. A key concept that is agreed upon for positive dependence and gives rise to different definitions for negative dependence is extremal dependence.
	This problem is useful to solve many optimization problems,  as discussed in \cite{puccetti2015extremal} from different fields, such as  applied mathematics \cite{lee2014multidimensional},   mathematical finance \cite{bernard2017robust}, \cite{fontana2021model}, and actuarial science \cite{cheung2013characterization}, \cite{lauzier2023pairwise}. 
	
	The strength of  dependence is compared through dependence orders $\preceq_*$, which are partial orders defined on Fréchet classes of probability distributions with the same one-dimensional marginals, satisfying some conditions (see \cite{joe1997multivariate}, \cite{kimeldorf1978monotone}, \cite{kimeldorf1989framework}, and \cite{tchen1980inequalities} as standard references).
	In each Fréchet class, dependence orders have an upper bound, the upper Fréchet bound, and a lower bound, the lower Fréchet bound. 
	The upper Fréchet bound is the joint distribution of a vector with components that are increasing transformations of a common random variable and it is, therefore, usually named the distribution of comonotonic random variables \cite{kimeldorf1978monotone}; it corresponds to the extremal positive dependence. 
	Extremal negative dependence is an intuitive concept in dimension two. A two-dimensional random vector is countermonotonic if its components are oppositely ordered, and in this case, its distribution is the lower Fréchet bound. In dimensions higher than two, the lower Fréchet bound is not always a distribution function, see \cite{dall2012advances}.
	Therefore, the generalization of countermonotonicity to higher dimensions is still an open issue, although various notions are proposed in the literature. 
	In \cite{puccetti2015extremal}, the authors review the most popular extremal dependence properties and they introduce a new one: $\Sigma$-countermonotonicity, the notion of extremal dependence we focus on.
	
	Whenever the lower Fréchet bound belongs to a Fréchet class, it is the unique $\Sigma$-countermonotonic distribution within the class. 
	Otherwise, when the lower Fréchet bound is not attainable, there are many $\Sigma$-countermonotonic vectors, and it is more difficult to interpret their role in the theory of negative dependence. 
	
	In this paper, we characterize extremal negative dependence in the class of multidimensional Bernoulli variables by completing the theory of negative dependence in \cite{borcea2009negative} with the theory of dependence orders  (see \cite{joe1997multivariate} and \cite{shaked2007stochastic}). Specifically, we prove that in the class of multidimensional Bernoulli distributions, extremal negative dependence is properly represented by the notion of $\Sigma$-countermonotonicity introduced in \cite{puccetti2015extremal}. 
	To this aim, we geometrically characterize this class, we study the main negative dependence properties across this class, and we prove that it satisfies a weak minimality condition with respect to the main dependence orders.
	
	Formally, let $\mathcal{B}_d$ be the class of $d$-dimensional Bernoulli probability mass functions $f_{\II}: \{0,1\}^d \rightarrow [0,1]$ and let $\BBB_d(\pp)= \BBB_d(p_1,\ldots, p_d)$ be the Fréchet class of multidimensional Bernoulli distributions with Bernoulli marginal distributions with means $p_j$, $j \in \{1,\ldots, d\}$.
	Negative dependence properties can be defined as subsets of $\BBB_d(\pp)$, see \cite{kimeldorf1989framework}, and dependence orders are partial orders $\preceq_*$ on $\BBB_d(\pp)$, allowing to compare the strength of a negative dependence property. 
	We focus on three negative dependence properties: negative association (\cite{joag1983negative} and \cite{shaked2007stochastic}), the weaker negative supermodular property (\cite{hu2000negatively}), and strongly Rayleigh property (as a negative dependence property) that has been introduced in \cite{borcea2009negative} for the Bernoulli variables using geometrical properties of the zero sets of probability generating polynomials. 
	The strongly Rayleigh property is the strongest negative dependence notion, since it implies negative association. 
	However, the strength of the strongly Rayleigh property cannot be compared in the Fréchet class $\mathcal{B}_d(\pp)$. Indeed, $\mathcal{B}_d(\pp)$ is an antichain of the partial order introduced in \cite{borcea2009negative} to compare the strength of the strongly Rayleigh property in the class $\mathcal{B}_d$.
	This order, indeed, is not a dependence order according to the classical definition \cite{kimeldorf1989framework}.
	We compare the strength of dependence using the dependence orders associated to negative association, i.e., weak association order $\preceq_{w-assoc}$, and to the negative supermodular property, that is the supermodular order $\preceq_{sm}$. It is known that $\preceq_{w-assoc}$ implies $\preceq_{sm}$, see \cite{shaked2007stochastic}.
	
	Our first significant result is the geometrical characterization of $\Sigma$-countermonotonicity in Section~\ref{sec:characterization}, where we prove that the class $\mathcal{B}_d^{\Sigma}(\pp)$ of $\Sigma$-countermonotonic distributions in $\mathcal{B}_d(\pp)$ is a convex polytope and we provide its extremal points.  
	
	We study the strongly Rayleigh property in the class $\mathcal{B}_d^{\Sigma}(\pp)$. 
	We provide counterexamples that show that $\Sigma$-countermonotonicity does not imply the strongly Rayleigh property and that it is not implied by it. 
	Our second important result,  Theorem \ref{thm:maxentr}, is to prove that there exists at least one strongly Rayleigh distribution in $\mathcal{B}_d^{\Sigma}(\pp)$, and that it maximizes the entropy in the class.
	We also provide a construction to find another strongly Rayleigh pmf in the class $\mathcal{B}_d^{\Sigma}(\pp)$, showing that in general the maximum entropy pmf in $\BBB_d^{\Sigma}(\pp)$ is not the unique pmf in $\BBB_d^{\Sigma}(\pp)$ with the strongly Rayleigh property.

	Our third significant result is Theorem \ref{thm:notorder} which proves that the class $\mathcal{B}_d^{\Sigma}(\pp)$ is an antichain for the weak-association order, and it satisfies a weak minimality property with respect to $\preceq_{w-assoc}$. 
	Indeed, we prove that if $f_{\II} \in \mathcal{B}_d(\pp) \setminus \mathcal{B}_d^{\Sigma}(\pp)$, then either there exists $f^{\Sigma} \in \mathcal{B}_d^{\Sigma}(\pp)$ such that $f^{\Sigma}\preceq_{sm} f_{\II}$ or $f_{\II}$ and $f^{\Sigma}$ are not comparable.
	Last, we construct a chain of multivariate Bernoulli distributions for $\preceq_{sm}$ starting from extremal negative dependence to independence.
	A counterexample shows that this chain for $\preceq_{sm}$ is not a chain for $\preceq_{w-assoc}$.
	
	The paper is organized as follows. 
	Section~\ref{sec:neg-dep} and \ref{sec:Etremal} are devoted to providing the essential tools of negative dependence and extremal negative dependence in a common language and in the framework of multidimensional Bernoulli distributions.
	In Section~\ref{sec:main results}, we characterize the class of $\Sigma$-countermonotonicity and study the strongly Rayleigh property within this class.
	We conclude in Section~\ref{sect:WeakAssociationOrder} by proving the minimality properties of the $\Sigma$-countermonotonic class of distributions with respect to the $\preceq_{w-assoc}$ and $\preceq_{sm}$ orders.
	
	\section{Negative dependence}\label{sec:neg-dep}
	
	Negative dependence properties and dependence orders are the main ingredients to study negative dependence.
	Dependence orders are used to compare the strength of negative dependence between vectors in the same Fréchet class of joint distributions with identical one-dimensional marginal distributions according to a specific property.
	The strongest negative dependence property is the strongly Rayleigh property introduced in \cite{borcea2009negative}, and we present it separately in Section~\ref{sec:SR}.
	The strongly Rayleigh property is defined in the class of multidimensional Bernoulli distributions, which is  our object of study. We, therefore, introduce all the properties and orderings in the setting of multidimensional Bernoulli distributions. 
	
	Depending on the context, that is, whether working with random vectors $\II$, their cumulative distribution functions (cdfs) $F_{\II}$, or their probability mass functions (pmfs) $f_{\II}$, we write $\II \in \BBB_d$, $F_{\II} \in \BBB_d$, or $f_{\II} \in \BBB_d$, meaning that $\II$ has pmf $f_{\II}\in \BBB_d$. 
	Since to compare the strength of dependence of two random vectors they must have the same one dimensional distributions; we consider the Fréchet class  $\BBB_d(\pp)$.
	The authors of \cite{fontana2018representation} proved that the Fréchet class $\BBB_d(\pp)$ is a convex polytope, that is a convex hull of a finite set of points called extremal points; see \cite{de1997computational} for a standard reference on polytopes. Therefore, a pmf $f_{\II}$ belongs to $\BBB_d(\pp)$ if and only if there exists $\lambda_1,\dots,\lambda_{n_{\pp}} \geq 0$ summing up to one such that
	\begin{equation}
		f_{\II}(\ii) = \sum_{k=1}^{n_{\pp}} \lambda_k f_{\RRR_k}(\ii),
		\label{eq:PmfExtremalPoints}
	\end{equation}
	for every $\ii \in \{0,1\}^d$, where $f_{\RRR_k} \in \BBB_d(\pp)$, $k = 1,\dots,n_{\pp}$, are the extremal points of $\BBB_d(\pp)$, also called extremal pmfs. 
	Note that $f_{\RRR_k}$ is the pmf of a Bernoulli random vector $\RRR_k$, for $k = 1,\dots,n_{\pp}$, that we also refer to as an extremal point.

	\subsection{Negative dependence notions and orders}\label{Sec:Neg-dep}
	This section introduces the building blocks of the theory of negative dependence developed in the probability literature: dependence properties and orders.
	
	Both dependence properties and orders have to satisfy some conditions, see (P0)--(P9) in Definition 3.1 in \cite{kimeldorf1989framework} and (P1)--(P9) in Section 2.2.3 in \cite{joe1997multivariate}. 
	We only recall the conditions that are necessary in what follows, and we state them in our framework, that is, for negative dependence and Bernoulli vectors.
	A dependence order $\preceq_*$ has to satisfy the following conditions:
	
	\begin{enumerate}[label=(\subscript{O}{{\arabic*}})]
		\item \label{cond:partial} A dependence order is a partial order $\preceq_*$  on $\mathcal{B}_d(\pp)$.
		\item \label{cond:FC}  If $\II\preceq_*\II'$, then $\II$ and $\II'$ belong to the same Fréchet class $\mathcal{B}_d(\pp)$.
		\item \label{cond:bound} For any $\II\in \mathcal{B}_d(\pp)$, $F_{L}(\ii)\preceq_*F_{\II}(\ii)\preceq_* F_U(\ii)$ holds for every $\ii \in \{0,1\}^d$, where $F_{\II}$ is the joint cdf of $\II$, $F_L$ is the lower Fréchet bound of $\mathcal{B}_d(\pp)$, defined by
		\begin{equation*}
			F_L(i_1,\ldots, i_d) = \max(F_1(i_1) + \dots + F_d(i_d) - d+1, \, 0),
			\quad 
			(i_1,\dots, i_d) \in \{0,1\}^d,
		\end{equation*}
		and
		$F_U(\ii)$ is the upper Fréchet bound of $\mathcal{B}_d(\pp)$, defined by
		\begin{equation*}
			F_U(i_1,\ldots, i_d) = \min(F_1(i_1), \dots, F_d(i_d)),
			\quad 
			(i_1, \dots, i_d) \in \{0,1\}^d.
		\end{equation*}
	\end{enumerate}
	Note that \ref{cond:partial} is a consequence of (P0) in \cite{kimeldorf1989framework}.
	It is important to stress that while $F_U \in \mathcal{B}_d(\pp)$ is a distribution function in any dimension, $F_L$ is not always a distribution function in dimensions higher than two. This means that we always have a vector with maximal strength of dependence. Still, in dimensions higher than two, we do not have a vector corresponding to extremal negative dependence when the lower bound of the Fréchet class is not a distribution.
	
	A negative dependence property is a subset $\mathcal{N} \subseteq \mathcal{B}_d(\pp)$ of multidimensional distributions in $\mathcal{B}_d(\pp)$ such that the following conditions are satisfied:  
	\begin{enumerate}[label=(\subscript{P}{{\arabic*}})]
		\item \label{P_NC} If $f_{\II} \in \mathcal{N}$, then $F_{j_1,j_2}(i_{1}, i_{2})\leq F^{\perp}_{j_1,j_2}(i_{1}, i_{2})$, for all $(i_{1}, i_{2})\in \{0,1\}^2$ and $1\leq j_1 < j_2 \leq d$ where $F_{j_1,j_2}$ and $F^{\perp}_{j_1,j_2}$ are the $({j_1}, {j_2})$-bivariate marginal cdfs of $\II$ and $\II^{\perp} \in \mathcal{B}_d(\pp)$, respectively, and $\II^{\perp}$ has independent components.
		\item \label{cond:P2} Any random vector $\II^{\perp}$ with independent components has pmf belonging to $\mathcal{N}$.
		\item \label{cond:P3} The lower Fréchet bound $F_{\boldsymbol{L}}(\ii):=\max(F_1(i_1)+F_2(i_2)-1, 0) $, where $\ii=(i_1,i_2)\in\{0,1\}^2$ of $\mathcal{B}_2(\pp)$, belongs to $\mathcal{N}$ for any $\pp\in [0,1]^2$.
	\end{enumerate}
	
	Conditions \ref{P_NC}, \ref{cond:P2}, and \ref{cond:P3} are fulfilled by classes of functions defined as 
	\begin{equation}\label{eq:NDP-Ord}
		\mathcal{N}^*=\{f_{\II}\in \mathcal{B}_d(\pp): \II \preceq_* \II^{\perp}\},
	\end{equation}
	where $\preceq_*$ is a dependence order satisfying conditions \ref{cond:partial}, \ref{cond:FC}, and \ref{cond:bound}.
	A weaker but intuitive dependence notion is pairwise negative correlation (p-NC), that is, all pairs have non-positive correlation; this is implied by Condition \ref{P_NC} of a negative dependence property.
	The most famous notion of negative dependence is negative association (NA). For its properties and applications, see \cite{joag1983negative}.
	
	\begin{definition} \label{def:NA}
		A $d$-dimensional random vector $\II\in \mathcal{B}_d(\pp)$ is said to be negatively associated (NA) if for any two disjoint set $\Lambda_1, \Lambda_2 \subseteq \{1,\dots,d\}$ and two monotone increasing functions $h_1, h_2$, the following inequality holds
		\begin{equation*}
			E[h_1(I_j: j \in \Lambda_1)h_2(I_j: j \in \Lambda_2)] \leq E[h_1(I_j: j \in \Lambda_1)] E[h_2(I_j: j \in \Lambda_2)],
		\end{equation*}
		provided that the expectations are finite.
	\end{definition}
	
	NA property can be defined starting from the weak association order as follows, see Section 9.E of \cite{shaked2007stochastic} on pages 419--420.
	\begin{definition} \label{def:WeakAssOrder}
		Let $\boldsymbol{I}, \boldsymbol{K}\in \mathcal{B}_d(\pp)$. We say that $\boldsymbol{I}$ is smaller than $\boldsymbol{K}$ in the weak association order, denoted $\boldsymbol{I} \preceq_{w-assoc} \boldsymbol{K}$, if 
		\begin{equation}
			Cov(h_1(I_j: j \in \Lambda_1),h_2(I_j: j \in \Lambda_2))
			\leq
			Cov(h_1(K_j: j \in \Lambda_1),h_2(K_j: j \in \Lambda_2)),
			\label{eq:WeakAssociatedOrder}
		\end{equation}
		for all disjoint subsets $\Lambda_1,\Lambda_2 \subseteq \{1,\dots,d\}$ and for all monotone increasing functions $h_1$ and $h_2$, for which the covariances in \eqref{eq:WeakAssociatedOrder} are defined. 
	\end{definition}
	
	If $\II\in\mathcal{B}_d(\pp)$ is NA, and if $\II^{\perp}\in \mathcal{B}_d(\pp)$ is a vector with independent components, then $\II \preceq_{w-assoc}\II^{\perp}$ (see Remark 9.E.9 in \cite{shaked2007stochastic}), thus we can write
	$$\text{NA}=\{f_{\II}\in \mathcal{B}_d(\pp): \II \preceq_{w-assoc} \II^{\perp}\}.$$
	In the class of Bernoulli vectors, p-NC  is equivalent to requiring that all pairs are negatively associated, corresponding to pairwise negative association. p-NC is, therefore, the minimal requirement for negative dependence.
	A dependence notion weaker than NA is negative supermodular dependence defined through the supermodular order, see \cite{Bauerle2006stochastic}. A supermodular function is a function $\phi \colon \{0,1\}^d \to \RR$ such that $\phi(\ii)+\phi(\kk) \leq \phi(\ii \wedge \kk)+\phi(\ii \vee \kk)$, where $\wedge$ and $\vee$ respectively denotes the componentwise minimum and maximum operators.
	
	\begin{definition}
		Given two random vectors $\II=(I_1,\dots,I_d)$ and $\boldsymbol{K}=(K_1,\dots,K_d)$ with pmfs in the same Fréchet class, $\II$ is said to be smaller than $\KK$ in the supermodular order, denoted by \(\boldsymbol{I} \preceq_{sm} \boldsymbol{K}\), if
		\begin{equation*}
			E[\phi(\boldsymbol{I})]\le E[\phi(\boldsymbol{K})],
		\end{equation*}
		for all supermodular functions $\phi \colon \{0,1\}^d \to \RR$ such that the expectations are finite.
	\end{definition}
	Theorem 9.E.8 of \cite{shaked2007stochastic} proves that $\II \preceq_{w-assoc} \KK$ implies $\II \preceq_{sm} \KK$.
	The property of negative supermodular dependence is defined by comparing $\II$ in supermodular order with $\II^{\perp}$, see \cite{hu2000negatively}.

	\begin{definition} 
		A vector $\II \in \mathcal{B}_d(\pp)$ has the negative supermodular dependence property (NSD) if $\II \preceq_{sm}\II^{\perp}$, where $\II^{\perp}\in \mathcal{B}_d(\pp)$ has independent components, i.e.
		\begin{equation*}
			\text{NSD} = \{ f_{\II}\in \mathcal{B}_d(\pp): \II \preceq_{sm} \II^{\perp}\}.
		\end{equation*}
	\end{definition}
	In \cite{christofides2004connection}, the authors prove that NA implies NSD, that is also a consequence of the fact that $\II \preceq_{w-assoc} \KK$ implies $\II \preceq_{sm} \KK$.
	We summarize the main properties of NA and NSD, and the related orders they satisfy.
	The proof for $\preceq_{w-assoc}$ is in Theorem  9.E.7 and the proof for $\preceq_{sm}$ in Theorem 9.A.9 of \cite{shaked2007stochastic}.
	\begin{proposition} 
		Let $\boldsymbol{I}, \boldsymbol{K}\in \mathcal{B}_d(\pp)$ and $\preceq_{*}$ be either the weak association or supermodular order.
		\begin{enumerate}
			\item Closure under increasing transformation. If $\boldsymbol{I} \preceq_{*} \boldsymbol{K}$, then
			\begin{equation*}
				(g_1(I_1), \dots, g_d(I_d)) \preceq_{*} (g_1(K_1), \dots, g_d(K_d)),
			\end{equation*}
			whenever $g_j: \mathbb{R} \rightarrow \mathbb{R}$, $j = 1, \dots, d$, are increasing. 
			
			\item Closure under marginalization. If $\boldsymbol{I} \preceq_{*} \boldsymbol{K}$, then $ (I_j, j \in A) \preceq_{*} (K_j, j \in A)$,
			for any subset $A \subseteq \{1,\dots,d\}$. 
			
		\end{enumerate}
	\end{proposition}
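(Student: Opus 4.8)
The plan is to reduce both closure statements, for each of the two orders, to two elementary facts about the test functions appearing in the definitions: first, that composing a supermodular (resp.\ coordinatewise increasing) function with coordinatewise increasing maps yields again a supermodular (resp.\ increasing) function; second, that a function on a sub-product $\{0,1\}^{A}$ lifts to a supermodular (resp.\ increasing) function on $\{0,1\}^{d}$ by ignoring the coordinates outside $A$. Granting these, the conclusions follow by substituting the transformed test functions into Definition~\ref{def:WeakAssOrder} and into the definition of $\preceq_{sm}$. I would first record that the transformed vectors lie in a common Fréchet class: since $\II,\KK\in\mathcal{B}_d(\pp)$ we have $I_j\stackrel{d}{=}K_j$ for each $j$, hence $g_j(I_j)\stackrel{d}{=}g_j(K_j)$ and $(I_j:j\in A)$, $(K_j:j\in A)$ have the same one-dimensional marginals, so the comparisons on the right-hand sides are well posed.

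For part~1 under $\preceq_{w-assoc}$, I would fix disjoint $\Lambda_1,\Lambda_2\subseteq\{1,\dots,d\}$ and increasing $h_1,h_2$, set $\tilde h_\ell(x_j:j\in\Lambda_\ell)=h_\ell(g_j(x_j):j\in\Lambda_\ell)$, observe that each $\tilde h_\ell$ is increasing in every argument as a composition of increasing maps, and apply $\II\preceq_{w-assoc}\KK$ to the pair $(\tilde h_1,\tilde h_2)$; the resulting inequality is exactly the one defining $(g_1(I_1),\dots,g_d(I_d))\preceq_{w-assoc}(g_1(K_1),\dots,g_d(K_d))$. For part~1 under $\preceq_{sm}$, I would fix a supermodular $\phi$ and put $\psi(\xx)=\phi(g_1(x_1),\dots,g_d(x_d))$; since each $g_j$ restricted to $\{0,1\}$ is monotone between two-element chains, it commutes componentwise with $\wedge$ and $\vee$, so $\psi$ is supermodular, and $E[\psi(\II)]\le E[\psi(\KK)]$ is precisely $E[\phi(g_1(I_1),\dots,g_d(I_d))]\le E[\phi(g_1(K_1),\dots,g_d(K_d))]$.

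For part~2, fix $A\subseteq\{1,\dots,d\}$. Under $\preceq_{w-assoc}$, disjoint subsets of $A$ are disjoint subsets of $\{1,\dots,d\}$ and increasing functions of the coordinates in them are unchanged, so the defining covariance inequalities for $(I_j:j\in A)\preceq_{w-assoc}(K_j:j\in A)$ form a subfamily of those guaranteed by $\II\preceq_{w-assoc}\KK$. Under $\preceq_{sm}$, given a supermodular $\phi$ on $\{0,1\}^{A}$ I would define $\tilde\phi(\ii)=\phi(i_j:j\in A)$ on $\{0,1\}^{d}$; since $(\ii\wedge\kk)|_A=\ii|_A\wedge\kk|_A$ and $(\ii\vee\kk)|_A=\ii|_A\vee\kk|_A$, the function $\tilde\phi$ is supermodular, and $E[\tilde\phi(\II)]\le E[\tilde\phi(\KK)]$ reads $E[\phi(I_j:j\in A)]\le E[\phi(K_j:j\in A)]$.

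The only step that takes genuine care is the stability of supermodularity under coordinatewise increasing substitution used in part~1; in the general, non-binary formulation this is part of Theorem~9.A.9 of \cite{shaked2007stochastic}, but in our setting it is immediate because each $g_j$ acts on the two-point chain $\{0,1\}$ and therefore trivially preserves the lattice operations. Consequently the whole proposition is a specialization of Theorems~9.E.7 and 9.A.9 of \cite{shaked2007stochastic} to $\mathcal{B}_d(\pp)$, and no new difficulty arises; the write-up would be mostly the bookkeeping of the substitutions above.
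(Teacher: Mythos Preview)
Your proposal is correct and aligns with the paper's treatment: the paper does not supply its own argument for this proposition but simply cites Theorems~9.E.7 and~9.A.9 of \cite{shaked2007stochastic}, exactly the references you invoke at the end. The added value of your write-up is that you spell out, in the binary setting, why the substitutions preserve the relevant class of test functions---in particular the observation that an increasing map on the two-point chain $\{0,1\}$ commutes with $\wedge$ and $\vee$, which makes the supermodularity step transparent without appealing to the full strength of Theorem~9.A.9. This is a harmless elaboration of the cited results rather than a different route.
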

	
	The following proposition for NSD property is provided in \cite{hu2000negatively} and for NA in \cite{joag1983negative}. 
	\begin{proposition} 
		Let $\boldsymbol{I} \in\mathcal{B}_d(\pp)$. The following statements hold:
		\begin{enumerate}
			\item If $\II$ is NA (NSD), then  $(g_1(I_1), \dots, g_d(I_d))$ is NA (NSD).
			\item If $\II$ is NA (NSD), then  $(I_j, j \in A)$ is NA (NSD).        
		\end{enumerate}
	\end{proposition}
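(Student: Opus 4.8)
The plan is to deduce both statements from the closure properties of the orders $\preceq_{w-assoc}$ and $\preceq_{sm}$ recorded in the previous proposition, via the identifications $\text{NA}=\{f_{\II}\in\mathcal{B}_d(\pp):\II\preceq_{w-assoc}\II^{\perp}\}$ and $\text{NSD}=\{f_{\II}\in\mathcal{B}_d(\pp):\II\preceq_{sm}\II^{\perp}\}$ established above. Write $\preceq_*$ for $\preceq_{w-assoc}$ when proving the NA statements and for $\preceq_{sm}$ when proving the NSD statements, so that in both cases the property under consideration is $\{f_{\II}:\II\preceq_*\II^{\perp}\}$, with $\II^{\perp}$ the vector with independent components in the Fréchet class of $\II$.

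For part 1, assume $\II$ is NA (resp. NSD), i.e.\ $\II\preceq_*\II^{\perp}$, and let $g_1,\dots,g_d$ be increasing (as in the previous proposition). Applying closure of $\preceq_*$ under coordinatewise increasing transformations (first item of the previous proposition) gives $(g_1(I_1),\dots,g_d(I_d))\preceq_*(g_1(I_1^{\perp}),\dots,g_d(I_d^{\perp}))$. It then remains to check that the right-hand side is the independent reference vector of the left-hand side: this holds because functions of independent random variables are independent, and because $g_j(I_j)$ and $g_j(I_j^{\perp})$ share the same law (as $I_j$ and $I_j^{\perp}$ do), so both vectors belong to the same Fréchet class while the right-hand one has independent components. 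By the identifications above, $(g_1(I_1),\dots,g_d(I_d))$ is then NA (resp. NSD). Part 2 is proved the same way, invoking instead closure of $\preceq_*$ under marginalization (second item of the previous proposition): from $\II\preceq_*\II^{\perp}$ one gets $(I_j,j\in A)\preceq_*(I_j^{\perp},j\in A)$, and $(I_j^{\perp},j\in A)$ is the independent reference vector of $(I_j,j\in A)$.

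For the NA statements one can alternatively argue straight from Definition~\ref{def:NA}: in part 1 the compositions $h_k(g_j(\cdot):j\in\Lambda_k)$ are still increasing, so the NA inequality for $\II$ applied to this pair yields the NA inequality for $(g_1(I_1),\dots,g_d(I_d))$; in part 2 disjoint subsets of $A$ are disjoint subsets of $\{1,\dots,d\}$, so the NA inequality for $\II$ restricts directly. The only point deserving attention — and it is routine — is the verification in part 1 that the transformed independent vector is the correct reference: since the $g_j$ may take the Bernoulli coordinates outside $\{0,1\}$, this is really the general (not necessarily Bernoulli) version of the statement, exactly as in \cite{joag1983negative} and \cite{hu2000negatively}, and no difficulty arises because all variables are bounded and hence all relevant expectations and covariances are finite.
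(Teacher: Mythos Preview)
The paper does not give its own proof of this proposition; it simply cites \cite{hu2000negatively} for the NSD case and \cite{joag1983negative} for the NA case. Your argument via the closure properties of $\preceq_{w-assoc}$ and $\preceq_{sm}$ stated in the preceding proposition, combined with the identifications $\text{NA}=\{\II:\II\preceq_{w-assoc}\II^{\perp}\}$ and $\text{NSD}=\{\II:\II\preceq_{sm}\II^{\perp}\}$, is correct and is exactly the natural way to derive the result from the material the paper has already set up; the direct verification you sketch for NA is essentially the original argument in \cite{joag1983negative}. The caveat you raise about $(g_1(I_1),\dots,g_d(I_d))$ possibly leaving the Bernoulli class is well taken and correctly handled: the definitions of NA, NSD and the two orders are stated for general random vectors in the cited references, so nothing is lost.
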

	
	To complete the picture, we conclude this section with the definition of the negative lattice condition introduced in \cite{fortuin1971correlation}, also called the multivariate reverse rule of order 2 in \cite{karlin1980classes}. 
	\begin{definition}
		A random vector $\II \in \BBB_d(\pp)$ satisfies the negative lattice condition (NLC) if $\II$ has a pmf $f_{\II}$ satisfying
		\begin{equation}\label{eq:mtp2}
			f_{\II}(\ii) f_{\II}(\kk) \geq f_{\II}(\ii \wedge \kk)f_{\II}(\ii \vee \kk),
		\end{equation}
		for every $\ii, \kk \in \{0,1\}^d$.
	\end{definition}
	
	By reversing the inequality in \eqref{eq:mtp2}, we define the corresponding positive dependence property, called the positive lattice condition. 
	Positive lattice condition is a local condition easier to check than positive association, which implies positive association. 
	Unfortunately, the NLC does not imply negative association, and it is not implied by it. 
	The NLC does not imply p-NC and, therefore, is not a proper negative dependence property. 
	We mention this property because of its link with the strongly Rayleigh property, which we introduce in Section~\ref{sec:SR}, and with extremal negative dependence, which we introduce in Section~\ref{sec:Etremal}.

	\subsection{Strongly Rayleigh property}\label{sec:SR}
	
	In \cite{borcea2009negative}, the authors introduce a strong notion of negative dependence for Bernoulli vectors through a property of their probability generating function (pgf). Let $\II\in\BBB_d(\pp)$, its pgf is defined as
	\begin{equation*}
		\mathcal{P}_{\II}(\zz) 
		=
		E[\zz^{\II}], 
	\end{equation*}
	where $\zz = (z_1,\dots,z_d) \in \mathbb{C}^d$ and $\zz^{\II} := \prod_{j=1}^d z_j^{I_j}$.
	The pgf $\PPP_{\II}$ of a Bernoulli random vector $\II$ is a multi-affine polynomial with positive real coefficients. 
	A multi-affine polynomial $\mathcal{P}$ is a polynomial in which each variable has a degree at most one.
	We need a preliminary notion on polynomials to introduce the strongly Rayleigh property.
	
	\begin{definition}\label{def:stab}
		A non-zero polynomial $\mathcal{P} \in \mathbb{C}[z_1,\dots,z_d]$ is stable if $\mathcal{P}(z_1,\dots,z_d) \neq 0$ whenever $\zz = (z_1,\dots,z_d) \in \HHH^d$, where $\HHH = \{ z \in \mathbb{C} : \Im(z)>0 \}$, where $\Im(z)$ denotes the imaginary part of the complex number $z$. If the coefficients of $\mathcal{P}$ are real, then $\mathcal{P}$ is real stable.
	\end{definition}
	In Theorem 5.6 of \cite{branden2007polynomials}, it is shown that a multi-affine polynomial $\PPP(\zz)$ with real coefficients is real stable if and only if, for every $1 \leq j_1, j_2 \leq d$,
	\begin{equation} \label{eq:CondStrongRayleigh}
		\frac{\partial \PPP}{\partial z_{j_1}} (\xx) \frac{\partial \PPP}{\partial z_{j_2}} (\xx) \geq \frac{\partial^2 \PPP}{\partial z_{j_1} \partial z_{j_2}} (\xx) \PPP(\xx), \quad \text{for all } \xx \in \RR^d.
	\end{equation}
	See \cite{wagner2011multivariate} for more details on stable and multi-affine polynomials.
	
	We can now define the strongly Rayleigh property.
	\begin{definition}
		A vector $\II\in\mathcal{B}_d(\pp)$ --- or its pmf $f_{\II} \in \BBB_d(\pp)$ --- satisfies the strongly Rayleigh property if its pgf function $\mathcal{P}_{\II}$ is real stable.
	\end{definition}
	
	It is easy to check that p-NC is equivalent to requiring that \eqref{eq:CondStrongRayleigh} holds for $\xx=\boldsymbol{1}$, where $\boldsymbol{1}$ is a vector of all ones; this immediately clarifies that the strongly Rayleigh property implies p-NC. 
	In Section 4.2 of \cite{borcea2009negative}, it is proven that the strongly Rayleigh property implies NA and that NA does not imply it. Therefore, it is the strongest negative dependence property. In the same paper, it is shown that the strongly Rayleigh property is stronger than the NLC.
	
	The strongly Rayleigh property is a negative dependence property that is not defined as in \eqref{eq:NDP-Ord} through a dependence order.
	Nevertheless, in \cite{borcea2009negative} (Definition 4.1, pages 545--546), the authors introduce a partial order $\unlhd$ on the set of strongly Rayleigh pmfs. 
	Their Proposition 4.12 proves that $\unlhd$ implies the usual stochastic order $\preceq_{st}$. 
	See $\cite{shaked2007stochastic}$ for the definition of the usual stochastic order and as a standard reference on stochastic orders. 
	It is well known that, see Theorem 6.B.19 of \cite{shaked2007stochastic}, any two random vectors $\II$ and $\II'$ with the same means and such that $\II\preceq_{st}\II'$  have the same joint distribution. 
	This means that two vectors $\II$ and $\II'$ in the same Fréchet class $\mathcal{B}_d(\pp)$ have the same distribution or are not comparable in $\unlhd$. 
	This implies that $\unlhd$ is not a dependence order as defined in \cite{joe1997multivariate} since it does not satisfy Conditions \ref{cond:FC} and \ref{cond:bound}. 
	Consequently, within a Fréchet class, it is impossible to compare the strength of the strongly Rayleigh property. 
	This means that $\unlhd$ is not the right partial order to define a class of extremal (meaning minimal with respect to the corresponding partial order) negative dependence vectors. 
	The best we can do is to define a class of extremal negative dependence distributions with respect to the traditional dependence (partial) orders, such as $\preceq_{w-assoc}$ or $\preceq_{sm}$, and look for the distribution within the extremal dependence class that satisfies the strongly Rayleigh property.

	\begin{remark}\label{ex:lin}
		Some examples of real stable polynomials are the monomials $\prod_{i=1}^d z_i$, for $d \in \{1,2,\dots\}$, and linear polynomials $\sum_{i=1}^d a_i z_i$ with coefficients $a_i \geq 0$. The polynomial $1 - z_1z_2$ is stable, in fact we observe that if $(z_1,z_2) \in \mathcal{H}^2$, then $z_1z_2<0$ or $z_1z_2\in \mathbb{C}$. For the same reason, we notice that  $1+z_1z_2$ is not stable.    
		For every $d \in \{1,2,\dots\}$ and $m \in \{0,\dots,d\}$, the elementary symmetric polynomial $\EEE_{d,m}$ is real stable, where
		\begin{equation*}
			\mathcal{E}_{d,m}(z_1,\dots,z_d) = 
			\sum_{1 \leq j_1 < \dots < j_m \leq d} z_{j_1} \dots z_{j_m}, \quad \text{for } m \in \{1,\dots,d\},
		\end{equation*}
		and $\EEE_{d,0} \equiv 1$.
	\end{remark}
	
	Lemma \ref{lemma:StabilityPreservers} directly follows from points (a), (b), (c), and (d) of Lemma 2.4 in \cite{wagner2011multivariate}.
	\begin{lemma} \label{lemma:StabilityPreservers}
		Let $\II \in \BBB_d(\pp)$ be a $d$-dimensional Bernoulli random vector with pmf $f_{\II}$ and pgf $\PPP_{\II}$.
		If $\II$ satisfies the strongly Rayleigh property, then:
		\begin{enumerate}[label=(\alph*)]
			\item \label{point:permutation} $(I_{\sigma(1)}, \dots, I_{\sigma(d)})$ satisfies the strongly Rayleigh property, for every permutation $\sigma$.
			\item \label{point:scaling} $c\PPP(a_1z_1,\dots,a_dz_d)$ is stable, for every $c \in \mathbb{C}$, $c \neq 0$, and $a_1,\dots,a_d > 0$.
			\item \label{point:diagonalization} The $(d-1)$-dimensional random vector $(I_1+I_2,I_3,\dots,I_d)$ satisfies the strongly Rayleigh property.
			\item \label{point:specialization} For any $k \in \{1,\dots,d\}$ and for every $1 \leq j_1 < \dots < j_k \leq d$, the Bernoulli random vector $(I_{j_1}, \dots, I_{j_k}) \in \BBB_k$ satisfies the strongly Rayleigh property, i.e., the strongly Rayleigh property is closed under marginalization.
		\end{enumerate}
	\end{lemma}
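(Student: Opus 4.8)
The plan is to obtain each of the four statements by recognising the indicated operation on $\II$ as one of the stability-preserving transformations of the pgf $\PPP_{\II}$ collected in Lemma~2.4 of \cite{wagner2011multivariate}, and then reinterpreting the transformed polynomial as the pgf of the asserted random vector. One observation applies throughout: several of those transformations only guarantee that the output is stable \emph{or identically zero}, but since $\PPP_{\II}(\boldsymbol{1}) = E[\boldsymbol{1}^{\II}] = 1$, every polynomial produced below still takes the value $1$ at an all-ones argument, hence is not the zero polynomial, so the alternative ``stable'' always applies. Moreover, the transformations used for \ref{point:permutation}, \ref{point:diagonalization} and \ref{point:specialization} keep the coefficients real and therefore upgrade ``stable'' to ``real stable'', that is, to the strongly Rayleigh property; in \ref{point:scaling} the scalar $c$ is allowed to be complex, which is exactly why only plain stability is claimed there.

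For part \ref{point:permutation}, writing $\JJ = (I_{\sigma(1)},\dots,I_{\sigma(d)})$, a one-line computation with $\zz^{\JJ}$ shows that $\PPP_{\JJ}(\zz) = \PPP_{\II}(z_{\sigma^{-1}(1)},\dots,z_{\sigma^{-1}(d)})$, so $\PPP_{\JJ}$ is obtained from $\PPP_{\II}$ by permuting its variables; since $\HHH^d$ is invariant under coordinate permutations, real stability is preserved. Statement \ref{point:scaling} is immediate and does not even require the cited lemma: if $a_j>0$ then $z_j\mapsto a_jz_j$ maps $\HHH$ into $\HHH$, so $\PPP(a_1z_1,\dots,a_dz_d)$ does not vanish on $\HHH^d$, and multiplying by a non-zero $c$ introduces no zeros.

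For part \ref{point:diagonalization}, note that
\[
\PPP_{(I_1+I_2,\,I_3,\dots,I_d)}(w,z_3,\dots,z_d)
= E\!\left[w^{I_1}w^{I_2}\textstyle\prod_{j\geq 3}z_j^{I_j}\right]
= \PPP_{\II}(w,w,z_3,\dots,z_d),
\]
so the required pgf is the diagonalisation of $\PPP_{\II}$ in its first two variables, and diagonalisation preserves stability (up to the zero polynomial) by the corresponding point of Lemma~2.4 in \cite{wagner2011multivariate}. For part \ref{point:specialization}, the pgf of $(I_{j_1},\dots,I_{j_k})$ equals $\PPP_{\II}$ with $z_j$ set to $1$ for every $j\notin\{j_1,\dots,j_k\}$; since $1\in\RR\subseteq\overline{\HHH}$, each such substitution is an instance of the specialisation point of Lemma~2.4 in \cite{wagner2011multivariate}, and performing the $d-k$ substitutions one at a time (each step producing a non-zero, hence stable, polynomial) yields the claim, together with the closure under marginalisation.

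There is no genuine obstacle in this lemma; the work is pure bookkeeping --- matching each of the four probabilistic manipulations to the correct analytic operation on $\PPP_{\II}$ and checking that the transformed polynomial really is the pgf of the asserted vector. The single delicate point, which is already internal to the cited Lemma~2.4 and which we therefore do not reprove, is that specialising a variable to a \emph{real} number (rather than to a point of the open upper half-plane) needs a Hurwitz-type limiting argument to conclude stability of the limit.
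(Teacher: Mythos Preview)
Your proposal is correct and follows essentially the same route as the paper, which simply states that the lemma ``directly follows from points (a), (b), (c), and (d) of Lemma~2.4 in \cite{wagner2011multivariate}'' and remarks that Point~\ref{point:specialization} uses specialisation with $a=1$. You have supplied the bookkeeping that the paper omits --- in particular the observation that $\PPP_{\II}(\boldsymbol{1})=1$ rules out the zero polynomial and the explicit identification of each transformed polynomial as the pgf of the asserted vector --- but the underlying argument is identical.
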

	Notice that the proof of Point \ref{point:specialization} follows from the specialization property of Lemma 2.4 in \cite{wagner2011multivariate} with $a=1$.
	Obviously, from the permutation property in Point \ref{point:permutation}, Point \ref{point:diagonalization} holds for the sum of $I_{j_1}$ and $I_{j_2}$ for any index $j_1$ and $j_2$, and not only for $j_1=1$ and $j_2=2$.
	
	The following Theorem \ref{thm:LinearComb_StrongRayleigh} relies on Theorem 1.6 in \cite{borcea2010multivariate}  and Theorem 4.11 in \cite{borcea2009negative} and provides conditions for linear combinations of polynomials to be real stable.
	\begin{theorem} \label{thm:LinearComb_StrongRayleigh}
		Let $\mathcal{P}_1, \mathcal{P}_2 \in \RR[z_1,\ldots, z_d]$ such that $\PPP_1 \not\equiv 0$ and $\PPP_2 \not\equiv 0$. The following are equivalent:
		\begin{enumerate}
			\item All non-zero polynomials in the set $\{\gamma_1 \PPP_1 + \gamma_2 \PPP_2: \gamma_1,\gamma_2 \in \RR\}$ are real stable.
			\item Either $\mathcal{P}_1+z_{d+1}\mathcal{P}_2 \in \RR[z_1,\ldots, z_d,z_{d+1}]$ or $\mathcal{P}_2+z_{d+1}\mathcal{P}_1 \in \RR[z_1,\ldots, z_d,z_{d+1}]$ is real stable, where $z_{d+1}$ is a new indeterminate.
		\end{enumerate}
	\end{theorem}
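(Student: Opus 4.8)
The plan is to establish both implications by elementary complex analysis on $\HHH^{d}$ and $\HHH^{d+1}$, using that under either hypothesis $\PPP_1$ and $\PPP_2$ are individually real stable, hence non-vanishing on $\HHH^{d}$. This is the multi-affine analogue of the classical Obreschkoff (Hermite--Biehler) theorem on real pencils, and the statement is the combination of Theorem~1.6 in \cite{borcea2010multivariate} with Theorem~4.11 in \cite{borcea2009negative}; I sketch a self-contained route. Note first that every polynomial involved (the members $\gamma_1\PPP_1+\gamma_2\PPP_2$ of the pencil and the two candidate extensions) has real coefficients, so for them ``stable'' and ``real stable'' are the same.

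For $(2)\Rightarrow(1)$, assume without loss of generality that $\PPP_1+z_{d+1}\PPP_2$ is real stable. For $\gamma_1\neq 0$ I would write $\gamma_1\PPP_1+\gamma_2\PPP_2=\gamma_1\,\bigl(\PPP_1+z_{d+1}\PPP_2\bigr)\big|_{z_{d+1}=\gamma_2/\gamma_1}$ and argue that specialising $z_{d+1}$ to the real value $\gamma_2/\gamma_1$ preserves non-vanishing on $\HHH^{d}$, up to the identically-zero case: for every $\varepsilon>0$ the polynomial $\PPP_1+(\tfrac{\gamma_2}{\gamma_1}+i\varepsilon)\PPP_2$ is non-vanishing on $\HHH^{d}$ because $\tfrac{\gamma_2}{\gamma_1}+i\varepsilon\in\HHH$ and $\PPP_1+z_{d+1}\PPP_2$ is non-vanishing on $\HHH^{d+1}$, and letting $\varepsilon\downarrow 0$ the several-variable Hurwitz theorem forces $\PPP_1+\tfrac{\gamma_2}{\gamma_1}\PPP_2$ to be non-vanishing on $\HHH^{d}$ or $\equiv 0$; scaling by $\gamma_1$ then yields real stability whenever the pencil member is not the zero polynomial. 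The remaining case $\gamma_1=0$, $\gamma_2\neq 0$ follows from a second Hurwitz limit, $\gamma_1\PPP_1+\PPP_2\to\PPP_2$ as $\gamma_1\to 0$, which shows $\PPP_2$ (and hence $\gamma_2\PPP_2$) is real stable since $\PPP_2\not\equiv 0$.

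For $(1)\Rightarrow(2)$, taking $(\gamma_1,\gamma_2)=(1,0)$ and $(0,1)$ in (1) shows $\PPP_1$ and $\PPP_2$ are real stable, so both are nowhere zero on the connected open set $\HHH^{d}$ and $\varphi:=\PPP_1/\PPP_2$ is holomorphic there. If $\PPP_1=c\PPP_2$ for some $c\in\RR$, I would conclude directly, since then $\PPP_1+z_{d+1}\PPP_2=(c+z_{d+1})\PPP_2$ and $\PPP_2+z_{d+1}\PPP_1=(1+cz_{d+1})\PPP_2$ are both real stable. Otherwise every $\gamma_1\PPP_1+\gamma_2\PPP_2$ with $(\gamma_1,\gamma_2)\neq(0,0)$ is a non-zero polynomial, hence real stable by (1), hence non-vanishing on $\HHH^{d}$; dividing by $\PPP_2(z^{*})\neq 0$ this says $\varphi$ omits every real value on $\HHH^{d}$, so $\varphi(\HHH^{d})\subseteq\mathbb{C}\setminus\RR$ and, by connectedness, either $\Im\varphi>0$ throughout $\HHH^{d}$ or $\Im\varphi<0$ throughout. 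In the first case, for any $z^{*}\in\HHH^{d}$ the unique root in $z_{d+1}$ of $\PPP_1(z^{*})+z_{d+1}\PPP_2(z^{*})$ is $-\varphi(z^{*})$, which lies in the lower half-plane, so $\PPP_1+z_{d+1}\PPP_2$ has no zero in $\HHH^{d+1}$ and is real stable; the second case is symmetric, with $1/\varphi=\PPP_2/\PPP_1$ showing $\PPP_2+z_{d+1}\PPP_1$ real stable.

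The genuinely delicate points are bookkeeping: one must track the members of the pencil that are excluded from (1) because they vanish identically --- this is exactly the proportional case, and is why $\varphi$ is a priori allowed to be a real constant --- and one must fix the orientation conventions so that ``$\Im\varphi>0$'' is paired with $\PPP_1+z_{d+1}\PPP_2$ rather than with $\PPP_2+z_{d+1}\PPP_1$. The analytic input (Hurwitz's theorem in several variables; non-vanishing of real stable polynomials on $\HHH^{d}$) is standard, so the substantive content is the multivariate Obreschkoff-type equivalence of \cite{borcea2009negative,borcea2010multivariate}, and the only new work here is to package it in the $(d+1)$-variable form convenient for the rest of the paper.
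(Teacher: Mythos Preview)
Your argument is correct. Note, however, that the paper does not actually supply a proof of this theorem: it is stated as a direct citation of Theorem~1.6 in \cite{borcea2010multivariate} and Theorem~4.11 in \cite{borcea2009negative}, with no argument given. What you have written is therefore not a comparison case but a self-contained reconstruction of the Obreschkoff--Hermite--Biehler argument underlying those references. Both directions are handled cleanly: the $(2)\Rightarrow(1)$ implication via real specialisation of $z_{d+1}$ and Hurwitz, and the $(1)\Rightarrow(2)$ implication via the observation that the ratio $\varphi=\PPP_1/\PPP_2$ omits $\RR$ on the connected set $\HHH^d$ and hence maps into a single half-plane, which pins down which of the two extensions is stable. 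The bookkeeping around the proportional case $\PPP_1=c\PPP_2$ is also correctly isolated. One small quibble: you call this the ``multi-affine analogue'', but neither the statement nor your proof uses multi-affinity anywhere; the result holds for arbitrary real polynomials.
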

	It is easy to check, for example, that $\EEE_{d,m+1} + z_{d+1}\EEE_{d,m} = \EEE_{d+1,m+1}$.
	Therefore, two consecutive elementary symmetric polynomials verify the conditions of Theorem \ref{thm:LinearComb_StrongRayleigh}.
	
	As discussed in Section 4.3 of \cite{borcea2009negative}, if the pgfs of two Bernoulli random vectors $\II_1, \II_2 \in \BBB_d$ satisfy the conditions in Theorem \ref{thm:LinearComb_StrongRayleigh}, then $\II_1 \unlhd \II_2$ or $\II_2 \unlhd \II_1$. 
	We have already mentioned that this implies that $\II_1$ and $\II_2$ have the same distribution or that their pmfs belong to different Fréchet classes. 
	Hence, two different Bernoulli pmfs of the same Fréchet class cannot be such that their pgfs satisfy the conditions of Theorem \ref{thm:LinearComb_StrongRayleigh}.
	
	In Figure \ref{fig:DefScheme}, we summarize the many links between dependence properties and orders and the strongly Rayleigh property.

	\section{Extremal Negative dependence}\label{sec:Etremal}

	The extremal negative dependence in dimension two corresponds to the lower Fréchet bound, that is, the distribution of a bivariate vector with countermonotonic components.
	
	\begin{definition} \label{def:countermonotonicity}
		The random vector $\II=(I_1, I_2)$ defined on a probability space $(\Omega, \mathcal{F}, \mathbb{P})$ is countermonotonic if
		\begin{equation*}
			(I_1(\omega)-I_1(\omega') )
			(I_2(\omega)-I_2(\omega')) \leq 0,
			\quad \text{for } (\mathbb{P}\times\mathbb{P}) \text{-almost every } (\omega,\omega') \in \Omega^2.
		\end{equation*}
	\end{definition}
	See, for example, \cite{dhaene1999safest} and \cite{puccetti2015extremal} for details about the concept of countermonotonicity. The simple extension of countermonotonicity is pairwise countermonotonicity, which requires each pair of variables within a vector to be countermonotonic. Pairwise countermonotonicity was introduced in \cite{dall1972frechet} and recently studied in \cite{lauzier2023pairwise}. We do not examine this property because we prove in Section~\ref{sec:main results} that in the class $\mathcal{B}_d(\pp)$, a pairwise countermonotonic vector exists if and only if the lower Fréchet bound is a distribution and hence that it is the distribution of the pairwise countermonotonic vector.
	
	We consider here three notions of extremal negative dependence introduced in the literature to generalize countermonotonicity in dimensions higher than two: minimality in convex sums, joint mixability, and $\Sigma$-countermonotonicity; see \cite{puccetti2015extremal} and references therein for a discussion on these notions in a general framework. 
	In our setting, minimality in convex sums consists, more specifically, of searching for vectors $\II=(I_1,\ldots, I_d)$ whose sums $S_{\II}=\sum_{j=1}^dI_j$ are minimal in convex order, that we recall in the following definition.
	
	\begin{definition}
		Given two random variables $X$ and $Y$, we say that $X$ is smaller than $Y$ under the convex order, denoted by $X \preceq_{cx} Y$, if $E[\phi(X)] \leq E[\phi(Y)]$ for every convex function $\phi:\RR \to \RR$, for which the expectations are finite.
	\end{definition}
	
	We recall Definition~3.3 of extremal negative dependence from \cite{puccetti2015extremal} . 
	
	\begin{definition} [$\Sigma_{cx}$-smallest element]
		\label{def:SigmaConvex}
		A random vector $\boldsymbol{I} = (I_1,\dots,I_d)$ with pmf $f_{\II}$ in the class $\mathcal{B}_d(\pp)$ is a {$\Sigma_{cx}$-smallest element} of $\mathcal{B}_d(\pp)$ if 
		\begin{equation*}
			\sum_{j=1}^d I_j \preceq_{cx} \sum_{j=1}^d I_j', 
		\end{equation*}
		for all $\boldsymbol{I}' = (I_1',\dots,I_d')$ with pmf in the class $\mathcal{B}_d(\pp)$.
	\end{definition}
	
	\begin{remark}
		Consider a vector $\II$ that is minimal in $\preceq_{sm}$ in a given Fréchet class $\mathcal{B}_d(\pp)$, whenever it exists. 
		Then, it is also a $\Sigma_{cx}$-smallest element of $\mathcal{B}_d(\pp)$ since any function $\phi(\xx) = g(\sum_{i=1}^dx_i)$ is supermodular, if $g$ is convex. 
	\end{remark}
	A $\Sigma_{cx}$-smallest element in general does not always exist. However, as hinted in Lemma 3.1 of \cite{bernard2017robust}, it does in the class of Bernoulli random variables. Since any discrete distribution with support on $\{0,\ldots, d\}$ can be constructed as the distribution of a sum of Bernoulli variables in many ways (\cite{fontana2024bernoulli}), usually several $\Sigma_{cx}$-smallest elements exist.
	
	Given any vector $\xx=(x_1,\dots,x_d) \in \RR^d$, we denote by $x_{\bullet} = \sum_{j=1}^d x_j$ the sum of its components.
	Let $\II\in \mathcal{B}_d$, then the sum $S_{\II}=\sum_{i=1}^dI_i$ has a discrete distribution with support on $\{0,\ldots, d\}$.  
	The vector $\II$ is a $\Sigma_{cx}$-smallest element in $\mathcal{B}_d(\pp)$ if $S_{\II}$ is minimal in convex order in $\mathcal{D}_d(p_{\bullet})$, where $\mathcal{D}_d(p_{\bullet})$ is the class of discrete distributions with support on $\{0,\ldots, d\}$ and mean $p_{\bullet}=\sum_{j=1}^dp_j.$
	
	For any $p_{\bullet} \in (0,d)$, the minimal distribution in convex order $s_{p_{\bullet}}$ in $\mathcal{D}_d(p_{\bullet})$  is known (see  e.g. \cite{bernard2017robust} and \cite{fontana2024high}), and it is given by
	\begin{equation} \label{eq:ExtrPoints_D}
		s_{p_{\bullet}}(y)
		=
		\begin{cases}
			{m+1-p_{\bullet}}, &y = m
			\\
			p_{\bullet}-m, &y = m+1
			\\
			0 &\text{otherwise}
		\end{cases},
	\end{equation}
	where $m$ is the integer smaller than or equal to $p_{\bullet}$.
	The class of $\Sigma_{cx}$-smallest elements is the class of Bernoulli vectors $\II$ with pmf $f_{\II}$ such that the pmf of $S_{\II} = \sum_{j=1}^dI_j$ is equal to $s_{p_{\bullet}}$, for some $p_{\bullet} \in (0,1)$. 
	
	From Theorem 2.1 of \cite{fontana2024bernoulli}, it follows that the class of $\Sigma_{cx}$-smallest elements is a convex polytope, and hence any $\Sigma_{cx}$-smallest element has pmf that is a convex combination of extremal points, that are known analytically. 
	Notice that if $p_{\bullet}\leq1$ or $p_{\bullet}\geq d-1$, then $m=0$ or $m=d-1$, and $s_{p_{\bullet}}$ has support on $\{0,1\}$ or $\{d-1,d\}$ and the polytope of $\Sigma_{cx}$-smallest elements has only one point, the lower Fréchet bound, that is proven to be a distribution in these cases in \cite{dhaene1999safest}.
	
	For $m \in \{0,\dots,d \}$, let the subset $\mathcal{A}_{d,m}$ of $\{0,1\}^d$ be defined by
	\begin{equation*}
		\mathcal{A}_{d,m} 
		=  \{\ii = (i_1,\dots,i_d) \in \{0,1\}^d : i_1 + \dots + i_d = m \}.
	\end{equation*}
	From \eqref{eq:ExtrPoints_D}, $\II \in \mathcal{B}_d(\pp)$ is a $\Sigma_{cx}$-minimal element if and only if it has support on $\mathcal{A}_{d,m} \cup \mathcal{A}_{d,m+1}$ when $p_{\bullet} \in (m, m+1)$. 
	If $p_{\bullet}$ is integer, any minimal $\Sigma_{cx}$-smallest vector has support on $\mathcal{A}_{d,p_{\bullet}}$ and it is a joint mix, according to the following definition.
	
	\begin{definition}\label{def:j-mix}
		A Bernoulli random vector $\II \in \BBB_d(\pp)$ is a joint mix if there exists $m \in \mathbb{R}$ such that $\mathbb{P}\left(S_{\II} = m \right) = 1$.
	\end{definition}
	From Definition \ref{def:j-mix}, it is clear that if $p_{\bullet}$ is not integer the Fréchet class does not admit any joint mix element.
	The last notion of extremal negative dependence is a generalization of countermonotonicity weaker than minimality in convex sums, but which exists in each Fréchet class --- not only for Bernoulli random variables.
	
	\begin{definition}
		A Bernoulli random vector $\II \in \BBB_d(\pp)$ is {$\Sigma$-countermonotonic} if for every subset $\Lambda \subseteq \{1,\dots,d\}$, the random variables $\sum_{j \in \Lambda} I_j$ and $\sum_{j \notin \Lambda} I_j$ are countermonotonic.
	\end{definition}
	We use the convention $\sum_{j \in \varnothing} I_j = 0$.
	The authors of \cite{puccetti2015extremal} proved in Theorem 3.8 that $\Sigma_{cx}$-smallest vectors are $\Sigma$-countermonotonic. 
	
	In Figure \ref{fig:DefScheme}, together with the links among dependence properties and orders, we also provide a summary of links among negative dependence notions and extremal dependence.
	For completeness, we conclude this section with the following remark about the connection between strongly Rayleigh vectors $\II$ and their sum $S_{\II}$.
	
	\begin{remark}
		A univariate distribution is strongly Rayleigh if and only if all the zeros of its pgf are real, see \cite{borcea2010multivariate}.  
		We can easily prove that if a univariate discrete distribution defined on the set $\{0,1,\dots,d\}$ is minimal under the convex order, it satisfies the strongly Rayleigh property.
		Indeed, for $p_{\bullet} \in [m,m+1)$, the pmf of $\mathcal{D}_d(p_{\bullet})$ that is minimal under the convex order is $s_{p_{\bullet}}$ and its pgf is $\mathcal{P}(z) = s_{p_{\bullet}}(m) z^m + s_{p_{\bullet}}(m+1) z^{m+1}$. 
		$\mathcal{P}(z)$ has only real zeros, and thus it is real stable.
		Notice that, from Point \ref{point:diagonalization} of Lemma \ref{lemma:StabilityPreservers}, the fact that $s_{p_{\bullet}}$ is a univariate strongly Rayleigh pmf is a necessary condition for a multidimensional $\Sigma_{cx}$-smallest pmf to be strongly Rayleigh.
	\end{remark}
	
	
	\begin{figure} [tb]
		\centering 
		
		\begin{tikzpicture}[scale=0.8, transform shape,
			roundnode/.style={circle, draw=green!60, fill=green!5, very thick, minimum size=7mm},
			squarednode/.style={rectangle, draw=black, very thick, minimum size=5mm},
			]
			\node[squarednode] (O) at (0,9) {Orders};
			\node[squarednode] (ExtrMin) at (5,9) {Extremal Minimality};
			\node[squarednode] (N) at (10,9) {Properties};
			
			\node (W-NA) at (0,6) {$\II \preceq_{w-assoc} \II'$};
			\node (SM) at (0,3) {$\II \preceq_{sm} \II'$};
			\node (Cov) at (0,0) {$Cov(I_{j_1},I_{j_2}) \leq Cov(I'_{j_1},I'_{j_2}), \forall j_1,j_2$};
			\node (SigmaCX) at (2,1.5) {$\sum I_j \preceq_{cx} \sum I'_j$};
			\draw[->] (W-NA) -- (SM);
			\draw[->] (SM) -- (Cov);
			\draw[->] (SM) -- (SigmaCX);
			\draw[->] (SigmaCX) -- (Cov) node[midway, sloped] {$\not\not$};

			\node (CTM) at (5,5) {min $\Sigma_{cx}$ $=$ $\Sigma$-ctm};

			\node (SR) at (10,8) {Strongly Rayleigh};
			\node (NA) at (10,6) {NA};
			\node (NSM) at (10,3) {NSD};
			\node (p-NC) at (10,0) {p-NC};
			\draw[->] (SR) -- (NA);
			\draw[->] (NA) -- (NSM);
			\draw[->] (NSM) -- (p-NC);
			\draw[->] (SR) -- (CTM) node[midway, sloped]{$\not\not$};
			\draw[->] (CTM) -- (p-NC) node[midway, sloped]{$\not\not$};
			\draw[-,dash pattern=on 3pt off 2pt] (SigmaCX) -- (CTM);

			\node (N_LC) at (7,0) {NLC}; 
			\draw[->] (CTM) -- (N_LC); 
			\draw[->] (SR) -- (N_LC);
			
		\end{tikzpicture}

		\caption{\emph{Links among dependence notions and extremal dependence. The arrow $\to$ means strictly implies, and $\not\to$ means does not imply.  The dashed line means that there is a link but not an implication.
		}}
		\label{fig:DefScheme}    
	\end{figure}
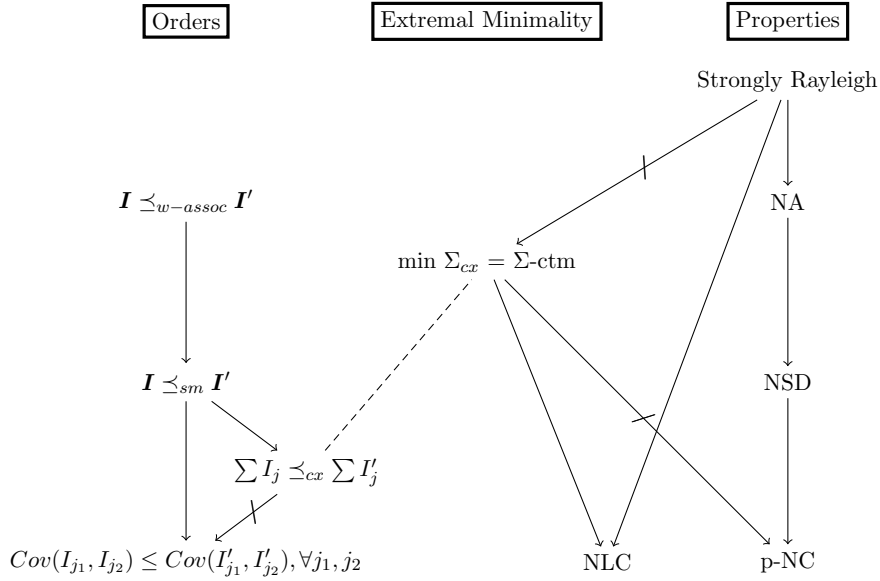


	\section{Main results}\label{sec:main results}
	This section characterizes the class of $\Sigma$-countermonotonic Bernoulli random vectors and their relationship with the strongly~Rayleigh property.
	
	\subsection{Characterization of $\Sigma$-countermonotonicity}\label{sec:characterization}
	
	In this section, Theorem \ref{thm:EquivalentDef} proves that $\Sigma$-countermonotonic Bernoulli random vectors are $\Sigma_{cx}$-smallest. This result is significant for two reasons. 
	Firstly, the two extensions of the lower Fréchet bound in dimensions higher than two lead to the same class of multidimensional Bernoulli distributions; secondly, this class of multidimensional Bernoulli distributions is a convex polytope.
	Then, we prove that $\Sigma$-countermonotonic Bernoulli pmfs are not always strongly~Rayleigh. 
	
	\begin{theorem} \label{thm:EquivalentDef}
		Let $\II \in \BBB_d(\pp)$.
		Then, the following statements are equivalent:
		\begin{enumerate}
			\item \label{Point1} $\II$ is $\Sigma_{cx}$-smallest.
			\item $\II$ is $\Sigma$-countermonotonic.
			\item $I_h$ and $\sum_{\underset{j \neq h}{j=1}}^d I_j$ are countermonotonic, for every $h \in \{1,\dots,d\}$.
		\end{enumerate}
	\end{theorem}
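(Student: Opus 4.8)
The plan is to prove the cycle of implications $\ref{Point1}\Rightarrow(2)\Rightarrow(3)\Rightarrow\ref{Point1}$, using throughout the characterization recalled just before the statement: $\II$ is $\Sigma_{cx}$-smallest if and only if the pmf of $S_{\II}=\sum_{j=1}^{d}I_j$ equals $s_{p_{\bullet}}$ from \eqref{eq:ExtrPoints_D}, equivalently if and only if $S_{\II}$ is almost surely supported on two consecutive integers $\{m,m+1\}$. The degenerate cases $p_j\in\{0,1\}$ merely lower the effective dimension and can be treated separately, so I would assume $0<p_j<1$ for every $j$.

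For $\ref{Point1}\Rightarrow(2)$: assuming $S_{\II}\in\{m,m+1\}$ almost surely, fix $\Lambda\subseteq\{1,\dots,d\}$ and set $A=\sum_{j\in\Lambda}I_j$, $B=\sum_{j\notin\Lambda}I_j$, so that $A+B=S_{\II}$. For $(\PP\times\PP)$-almost every pair $(\omega,\omega')$ with $A(\omega)>A(\omega')$ one has $A(\omega)\ge A(\omega')+1$, whence $B(\omega)=S_{\II}(\omega)-A(\omega)\le m-A(\omega')$ and $B(\omega')=S_{\II}(\omega')-A(\omega')\ge m-A(\omega')$, so $B(\omega)\le B(\omega')$. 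Since the expression $(A(\omega)-A(\omega'))(B(\omega)-B(\omega'))$ is symmetric in $(\omega,\omega')$ and vanishes when $A(\omega)=A(\omega')$, this yields countermonotonicity of $A$ and $B$. (This implication also follows from Theorem~3.8 of \cite{puccetti2015extremal}.) The implication $(2)\Rightarrow(3)$ is immediate on taking $\Lambda=\{h\}$.

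The implication $(3)\Rightarrow\ref{Point1}$ is the crux and the step I expect to require the most care. The key observation is that, $I_h$ being $\{0,1\}$-valued, countermonotonicity of $I_h$ with $T_h:=\sum_{j\neq h}I_j=S_{\II}-I_h$ is so restrictive that it can be tested directly on the atoms of $f_{\II}$. Suppose, for contradiction, that $\mathrm{supp}(S_{\II})$ is not contained in two consecutive integers; then there are $\ii,\kk\in\{0,1\}^{d}$ with $f_{\II}(\ii)>0$, $f_{\II}(\kk)>0$ and, after possibly swapping them, $k_{\bullet}\ge i_{\bullet}+2$. Since $\sum_{j}(k_j-i_j)=k_{\bullet}-i_{\bullet}>0$, some index $h$ satisfies $k_h=1$ and $i_h=0$. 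The product event $\{\II=\kk\}\times\{\II=\ii\}$ has positive $(\PP\times\PP)$-measure, and on it $(I_h(\omega)-I_h(\omega'))(T_h(\omega)-T_h(\omega'))=(1-0)((k_{\bullet}-1)-i_{\bullet})=k_{\bullet}-i_{\bullet}-1\ge 1>0$, contradicting countermonotonicity of $I_h$ and $T_h$. Hence $S_{\II}$ is supported on two consecutive integers, and combining this with $\E[S_{\II}]=\sum_{j}p_j=p_{\bullet}$ pins those integers down to $m$ and $m+1$ with $m\le p_{\bullet}\le m+1$ and the masses to $s_{p_{\bullet}}(m)$ and $s_{p_{\bullet}}(m+1)$; thus the pmf of $S_{\II}$ is $s_{p_{\bullet}}$ and $\II$ is $\Sigma_{cx}$-smallest. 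The only remaining subtleties are the reduction to the case $0<p_j<1$ and the standard passage from the almost-sure countermonotonicity condition to a statement about individual atoms of $f_{\II}$.
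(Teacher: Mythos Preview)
Your proposal is correct and follows essentially the same route as the paper: both cite (or, in your case, also reprove) Theorem~3.8 of \cite{puccetti2015extremal} for $1\Rightarrow 2$, note that $2\Rightarrow 3$ is immediate, and then argue $3\Rightarrow 1$ by contradiction via two atoms $\ii,\kk$ with $k_\bullet\ge i_\bullet+2$, a coordinate $h$ with $k_h=1$, $i_h=0$, and the resulting violation of countermonotonicity of $I_h$ and $S_{\II}-I_h$. The only difference is cosmetic: you supply a short self-contained argument for $1\Rightarrow 2$, whereas the paper simply invokes the reference.
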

	
	\begin{proof}
		Theorem 3.8 of \cite{puccetti2015extremal} states that 1$\implies$2, and it is clear that 2$\implies$3.
		We will prove that if $\II = (I_1,\dots,I_d)$ is a Bernoulli random vector, then 3$\implies$1.
		Let $\II \in \BBB_d(\pp)$ be a Bernoulli random vector such that $I_h$ and $\sum_{j \neq h} I_j$ are countermonotonic, for every $h \in \{1,\dots,d\}$, and let $f_{\II}$ be its pmf.
		Let $p_j=E[I_j]$, $j=1,\dots,d$, and $p_{\bullet} = \sum_{j=1}^d p_j$.
		Suppose that the distribution of $\II$ is not a $\Sigma_{cx}$-smallest element, or, in other terms, that the distribution $s \in \DDD(p_{\bullet})$ of the sum $S_{\II} = \sum_{j=1}^d I_j$ is different from $s_{p_{\bullet}}$ given in \eqref{eq:ExtrPoints_D}.
		Therefore, there exist $m_1,m_2 \in \{0,1,\dots,d\}$, with $m_2 - m_1 \geq 2$, such that $s(m_1)>0$ and $s(m_2)>0$.   
		This means that there exist $\ii_1 \in \AAA_{d,m_1}$ and $\ii_2 \in \AAA_{d,m_2}$ such that $f_{\II}(\ii_1 ) > 0$ and $f_{\II}(\ii_2 ) > 0$.
		Since $m_2 \geq m_1 + 2$, there exist $h^* \in \{1,\dots,d\}$ such that $i_{1,h^*} = 0$ and $i_{2,h^*} = 1$.
		In the context of Definition \ref{def:countermonotonicity}, since $I_{h^*}$ and $\sum_{j \neq h^*} I_j$ are countermonotonic, we have
		\begin{equation*}
			\bigg( \sum_{j \neq h^*} I_j(\omega)-\sum_{j \neq h^*} I_j(\omega') \bigg)
			\bigg(I_{h^*}(\omega)-I_{h^*}(\omega') \bigg) \leq 0,
			\quad \text{for } (\mathbb{P}\times\mathbb{P}) \text{-almost every } (\omega,\omega') \in \Omega^2.
		\end{equation*}
		By considering $(\omega,\omega') \in \Omega^2$ such that $\II(\omega) = \ii_1$ and $\II(\omega') = \ii_2$, we have $(m_1 - (m_2-1) )(0-1) \leq 0$, that implies $m_2 - m_1 \leq 1$.
		However, the latter contradicts the initial assumption that $m_2 - m_1 \geq 2$. This is absurd and therefore $\II$ is $\Sigma_{cx}$-smallest.
	\end{proof}
	
	\begin{remark}
		The third statement of Theorem \ref{thm:EquivalentDef} is the stopping condition of the rearrangement algorithm in \cite{puccetti2012computation}. 
		Since the three definitions are equivalent, in the Bernoulli framework, the rearrangement algorithm always returns a $\Sigma_{cx}$-smallest pmf.
	\end{remark}
	
	\begin{remark}\label{remark:astar}
		Let us denote by $\BBB_d^{\Sigma} \subseteq \BBB_d$ the class of $\Sigma$-countermonotonic multivariate Bernoulli distributions of $\mathcal{B}_d$.
		Since  $f\in \mathcal{B}_d^{\Sigma}(\pp)$ is $\Sigma_{cx}$-minimal in a Fr\'echet class $\mathcal{B}_d(\pp)$ for a $\pp\in(0,1)^d$, it has support on  $\AAA^* =\mathcal{A}_m\cup\mathcal{A}_{m+1}$, where $m$ is the integer smaller than or equal to $p_{\bullet}$. 
	\end{remark}
	If $\II \in \BBB_d^{\Sigma}$, then there exists $m \in \{0,1,\dots,d-1\}$ such that the corresponding pgf is of the form
	\begin{equation}\label{pgfS}
		\mathcal{P}_{\II}(\zz) = \sum_{\ii \in \mathcal{A}_{d,m}} a_{\ii} \boldsymbol{z}^{\ii}
		+ 
		\sum_{\ii \in \mathcal{A}_{d,m+1}} a_{\ii} \boldsymbol{z}^{\ii},
	\end{equation}
	where $\zz^{\ii} = \prod_{j=1}^d z_j^{i_j}$. 
	If $\II$ is joint mix, then \eqref{pgfS} simplifies to
	\begin{equation}\label{pgfS2}
		\mathcal{P}_{\II}(\zz) = \sum_{\ii \in \mathcal{A}_{d,m}} a_{\ii} \boldsymbol{z}^{\ii},
	\end{equation}
	for $m\in \{0,\ldots, d\}$.
	
	We now characterize the geometrical structure of $\mathcal{B}_d^{\Sigma}$ and its intersection with the Fr\'echet class $\mathcal{B}_d(\pp)$, with $\pp\in (0,1)^d$.
	Proposition \ref{prop:SigmaCTM} proves that the class $\mathcal{B}_d^{\Sigma}$ of $\Sigma$-countermonotonic Bernoulli random vectors is the union of standard simplexes. 
	
	\begin{proposition} \label{prop:SigmaCTM}
		There is a one to one map between the class $\BBB_d^{\Sigma}$ and the union of the standard simplexes, as follows 
		\begin{equation*}
			\BBB_d^{\Sigma}\longleftrightarrow \cup_{m=0}^{d-1}\Delta_{n_m, 1},
		\end{equation*}
		where $\Delta_{n_m, 1}$ is the standard $n_m$-simplex where $n_m=\binom{d+1}{m+1}$.
	\end{proposition}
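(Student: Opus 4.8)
The plan is to exhibit the one-to-one correspondence explicitly, decomposing $\BBB_d^{\Sigma}$ according to the unique integer $m$ that controls the support of each $\Sigma$-countermonotonic pmf. By Remark~\ref{remark:astar} and Theorem~\ref{thm:EquivalentDef}, every $f_{\II} \in \BBB_d^{\Sigma}$ is $\Sigma_{cx}$-smallest, hence its support lies in $\AAA_{d,m} \cup \AAA_{d,m+1}$ for some $m \in \{0,1,\dots,d-1\}$; moreover, since the sum $S_{\II}$ must have the minimal-convex-order law $s_{p_\bullet}$ of \eqref{eq:ExtrPoints_D}, this $m$ is uniquely determined (it is $\lfloor p_\bullet \rfloor$, or, in the joint-mix case, we may absorb $\AAA_{d,d}$ into the $m=d-1$ slice with $a_{\ii}=0$ on $\AAA_{d,d}$). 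So $\BBB_d^{\Sigma}$ partitions as a disjoint union over $m = 0,\dots,d-1$ of the sets of pmfs supported on $\AAA_{d,m} \cup \AAA_{d,m+1}$.

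Next I would fix $m$ and identify the $m$-th slice. A pmf supported on $\AAA_{d,m}\cup\AAA_{d,m+1}$ is nothing but an assignment of nonnegative weights $(a_{\ii})_{\ii \in \AAA_{d,m}\cup\AAA_{d,m+1}}$ summing to one; conversely any such weight vector defines a $\Sigma_{cx}$-smallest pmf (its sum is automatically supported on $\{m,m+1\}$, hence equals $s_{p_\bullet}$ for the induced $p_\bullet$, so it is $\Sigma_{cx}$-smallest and therefore $\Sigma$-countermonotonic by Theorem~\ref{thm:EquivalentDef}). Thus the $m$-th slice is in bijection with the standard simplex on the index set $\AAA_{d,m}\cup\AAA_{d,m+1}$. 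It remains to count: $|\AAA_{d,m}| = \binom{d}{m}$ and $|\AAA_{d,m+1}| = \binom{d}{m+1}$, so by Pascal's identity the index set has cardinality $\binom{d}{m}+\binom{d}{m+1} = \binom{d+1}{m+1} =: n_m$. Hence the $m$-th slice is (one-to-one with) the standard $n_m$-simplex $\Delta_{n_m,1}$, and taking the union over $m=0,\dots,d-1$ gives the claim.

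The main obstacle, and the only point requiring care, is the joint-mix / boundary bookkeeping: when $p_\bullet$ is an integer, the support is a single layer $\AAA_{d,p_\bullet}$ as in \eqref{pgfS2}, and one must check this is not double-counted across adjacent slices. The clean way is to declare, in the parametrization, that the $m$-th slice consists of pmfs supported on $\AAA_{d,m}\cup\AAA_{d,m+1}$ with the representation \eqref{pgfS} understood to allow zero coefficients; then a joint mix with all mass on $\AAA_{d,m+1}$ sits in the $m$-th slice (as a face of $\Delta_{n_m,1}$) and a joint mix with all mass on $\AAA_{d,0}$ — impossible since $p_\bullet \in (0,1)^d$ forces $p_\bullet>0$ — need not be considered; one should simply note that any given $f_{\II}\in\BBB_d^{\Sigma}$ has a well-defined $m$ (the smaller of the two consecutive values in the support of $S_{\II}$, or $p_\bullet-1$ if $p_\bullet=d$ is the only possibility with a single top layer), so the map to $\cup_m \Delta_{n_m,1}$ is genuinely well-defined and injective, not merely surjective. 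I would also remark that the union on the right is a disjoint union, matching the partition of $\BBB_d^{\Sigma}$ into slices, so "one to one map" should be read as a bijection slice-by-slice.
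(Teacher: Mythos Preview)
Your proof is correct and follows essentially the same route as the paper: invoke Theorem~\ref{thm:EquivalentDef} to place each $\Sigma$-countermonotonic pmf on a slice $\AAA_{d,m}\cup\AAA_{d,m+1}$, identify pmfs on that finite set with a standard simplex, and count via Pascal's identity $\binom{d}{m}+\binom{d}{m+1}=\binom{d+1}{m+1}$. Your extra care about the joint-mix boundary (avoiding double-counting between adjacent slices) is in fact more thorough than the paper's brief argument, which simply asserts the correspondence without discussing this overlap.
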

	
	\begin{proof}
		From Theorem \ref{thm:EquivalentDef}, if $\II$ is a Bernoulli random vector with pmf $f_{\II} \in \BBB_d^{\Sigma}$, then the pmf $s$ of the sum of its components $S_{\II}$ has support on two consecutive points. 
		Therefore, there exists $m$ such that $f_{\II}$ has support on $\mathcal{A}_{d,m} \cup \mathcal{A}_{d,m+1}$. 
		The space of pmfs on $\mathcal{A}_{d,m} \cup \mathcal{A}_{d,m+1}$ is in one to one correspondence with the standard simplex in dimension $\binom{d}{m}+\binom{d}{m+1}=\binom{d+1}{m+1}$.
	\end{proof}

	The strength of dependence of two vectors can be compared if they have the same one-dimensional marginal distributions, that is, if they belong to the same Fréchet class $\mathcal{B}_d(\pp)$. 
	From Theorem \ref{thm:EquivalentDef}, we can  prove that the class $\mathcal{B}_d^{\Sigma}(\pp):=\mathcal{B}_d^{\Sigma}\cap \mathcal{B}_d(\pp)$ is a convex polytope.
	\begin{proposition}\label{prop:polcx}
		The class $\mathcal{B}_d^{\Sigma}(\pp)$ is a convex polytope. Its extremal points are the extremal points of $\mathcal{B}_d(\pp)$ that are $\Sigma$-countermonotonic.
	\end{proposition}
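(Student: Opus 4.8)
The plan is to identify $\mathcal{B}_d^{\Sigma}(\pp)$ with the slice of the already-known polytope $\mathcal{B}_d(\pp)$ obtained by forcing the coordinates outside $\AAA^{*}$ to vanish, and then to read off the vertices. By Theorem~\ref{thm:EquivalentDef} together with Remark~\ref{remark:astar}, a pmf $f_{\II}\in\mathcal{B}_d(\pp)$ is $\Sigma$-countermonotonic if and only if it is supported on $\AAA^{*}=\AAA_{d,m}\cup\AAA_{d,m+1}$, with $m$ the integer less than or equal to $p_{\bullet}$ (only $\AAA_{d,m}$ when $p_{\bullet}$ is an integer, the joint-mix case; the argument is the same in both cases). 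Hence
\[
\mathcal{B}_d^{\Sigma}(\pp)=\bigl\{\, f\in\mathcal{B}_d(\pp):\ f(\ii)=0\ \text{for every}\ \ii\in\{0,1\}^d\setminus\AAA^{*}\,\bigr\}.
\]
Since $\mathcal{B}_d(\pp)$ is a convex polytope by \cite{fontana2018representation} and we are intersecting it with the finitely many hyperplanes $\{f(\ii)=0\}$, $\ii\notin\AAA^{*}$, the set $\mathcal{B}_d^{\Sigma}(\pp)$ is again a bounded polyhedron, i.e.\ a convex polytope; it is nonempty because $\Sigma_{cx}$-smallest Bernoulli vectors always exist in $\mathcal{B}_d(\pp)$.

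For the extremal points, let $f_{\RRR_1},\dots,f_{\RRR_{n_{\pp}}}$ be the extremal points of $\mathcal{B}_d(\pp)$ appearing in \eqref{eq:PmfExtremalPoints}. If $f_{\RRR_k}$ is $\Sigma$-countermonotonic, then $f_{\RRR_k}\in\mathcal{B}_d^{\Sigma}(\pp)\subseteq\mathcal{B}_d(\pp)$, and a point that cannot be written as a nontrivial convex combination of elements of the larger set $\mathcal{B}_d(\pp)$ certainly cannot be so written using elements of the subset $\mathcal{B}_d^{\Sigma}(\pp)$; hence $f_{\RRR_k}$ is extremal in $\mathcal{B}_d^{\Sigma}(\pp)$. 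Conversely, let $f$ be an extremal point of $\mathcal{B}_d^{\Sigma}(\pp)$. Writing $f=\sum_{k}\lambda_k f_{\RRR_k}$ with $\lambda_k\ge 0$ and $\sum_k\lambda_k=1$, for any $\ii\notin\AAA^{*}$ we get $0=f(\ii)=\sum_k\lambda_k f_{\RRR_k}(\ii)$ with all summands nonnegative, so $f_{\RRR_k}(\ii)=0$ whenever $\lambda_k>0$. Thus every $f_{\RRR_k}$ with $\lambda_k>0$ is supported on $\AAA^{*}$ and therefore lies in $\mathcal{B}_d^{\Sigma}(\pp)$; since $f$ is extremal in $\mathcal{B}_d^{\Sigma}(\pp)$, it must coincide with each such $f_{\RRR_k}$, so $f$ is one of the extremal points of $\mathcal{B}_d(\pp)$, and a $\Sigma$-countermonotonic one. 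This gives exactly the claimed description of the extremal points.

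The whole argument is essentially routine once the support characterization of Theorem~\ref{thm:EquivalentDef} is in hand; the only mildly delicate point is the identification of the vertices, which one can alternatively phrase structurally by observing that $f\mapsto\sum_{\ii\notin\AAA^{*}}f(\ii)$ is a linear functional that is nonnegative on $\mathcal{B}_d(\pp)$ and vanishes precisely on $\mathcal{B}_d^{\Sigma}(\pp)$, so that $\mathcal{B}_d^{\Sigma}(\pp)$ is a face of $\mathcal{B}_d(\pp)$ and its vertices are exactly the vertices of $\mathcal{B}_d(\pp)$ contained in it. I do not anticipate any genuine obstacle beyond this bookkeeping.
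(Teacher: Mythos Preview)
Your proof is correct and follows essentially the same route as the paper: both use the support characterization from Theorem~\ref{thm:EquivalentDef} to show that $f\in\mathcal{B}_d^{\Sigma}(\pp)$ forces every extremal point appearing with positive weight in its decomposition \eqref{eq:PmfExtremalPoints} to be supported on $\AAA^{*}$, hence $\Sigma$-countermonotonic. Your presentation is slightly more explicit---you separately verify that the $\Sigma$-countermonotonic extremal points of $\mathcal{B}_d(\pp)$ remain extremal in the subset, and your closing remark that $\mathcal{B}_d^{\Sigma}(\pp)$ is a \emph{face} of $\mathcal{B}_d(\pp)$ (as the zero set of a nonnegative linear functional) is a clean way to get the vertex identification in one stroke---but the underlying argument is the same.
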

	\begin{proof}
		If $f_{\II} \in \BBB_d(\pp)$, then there exist $\alpha_1,\dots,\alpha_{n_{d}(\pp)} \geq 0$ summing up to one such that $f_{\II} = \sum_{k=1}^{n_{d}(\pp)} \alpha_k f_{\RRR_k}$, where $f_{\RRR_k}$, $k = 1,\dots,{n_{d}(\pp)}$, are the extremal points of $\BBB_d(\pp)$ and ${n_{d}(\pp)}$ is their number.
		A pmf $f_{\II} \in \BBB_d(\pp)$ is a $\Sigma_{cx}$-smallest element of $\BBB_d(\pp)$ if and only if its support is contained in $\AAA_{d,p_{\bullet}}$, if $p_{\bullet}$ is integer, or if it is contained in $\AAA_{d,m} \cup \AAA_{d,m+1}$, if $p_{\bullet} \in (m,m+1)$.
		Therefore, $f_{\II} \in \BBB_d(\pp)$ is a $\Sigma_{cx}$-smallest element of $\BBB_d(\pp)$ if and only if $\alpha_k = 0$ for every $k \in \{1,\dots,{n_{d}(\pp)}\}$ such that $f_{\RRR_k}$ is not $\Sigma_{cx}$-smallest.
		Since every Fréchet class admits a $\Sigma_{cx}$-smallest element, this also implies that there exists a $\Sigma_{cx}$-smallest extremal point.
		In other terms, $f_{\II} \in \BBB_d(\pp)$ is $\Sigma_{cx}$-smallest if and only if there exist $\alpha_1,\dots,  \alpha_{{n^{\Sigma}_{d}(\pp)}} \geq 0$, $n^{\Sigma}_{d}(\pp) \in \{1,\dots,{n_{d}(\pp)}\}$, with $\sum_{k=1}^{n^{\Sigma}_{d}(\pp)} \alpha_k = 1$, such that $f_{\II} = \sum_{k=1}^{n^{\Sigma}_{d}(\pp)} \alpha_k f_{\RRR_k}^{\Sigma}$, where $f_{\RRR_k}^{\Sigma}$, $k = 1,\dots,n^{\Sigma}_{d}(\pp)$, are the $\Sigma_{cx}$-smallest extremal points of $\BBB_d(\pp)$.
		The result follows by Theorem \ref{thm:EquivalentDef} with the equivalence of $\Sigma_{cx}$-minimality and $\Sigma$-countermonotonicity.
	\end{proof}

	We now study the link between $\mathcal{B}_d^{\Sigma}(\pp)$ and the strongly Rayleigh property. 
	The strongly Rayleigh property does not imply the $\Sigma$-countermonotonicity property; in fact, any independent Bernoulli random vector has the strongly Rayleigh property. 
	The following counterexample shows that the reverse implication is not true either. 
	In particular, $\Sigma$-countermonotonicity does not implies p-NC. 
	\begin{example} \label{ex:ctexample1}
		Consider a Bernoulli random vector $\II =  (I_1,I_2,I_3)$ with $f_{\II} \in \BBB_3(\frac{2}{5})$ whose values are provided in the following table:
		\begin{table}[H]
			\centering
			\begin{tabular}{c|cccccccc}
				\toprule
				$\ii$ & (0,0,0) & (1,0,0) & (0,1,0) & (1,1,0) & (0,0,1) & (1,0,1) & (0,1,1) & (1,1,1) \\ 
				\midrule
				$f_{\II}(\ii)$ & 0 & $\tfrac{1}{5}$ & $\tfrac{1}{5}$ & $\tfrac{1}{5}$ & $\tfrac{2}{5}$ & 0 & 0 & 0 \\ 
				\bottomrule
			\end{tabular}
		\end{table}
		We observe that $\II$ is $\Sigma$-countermonotonic; but $\II$ does not satisfy the p-NC property, indeed $\text{Cov}(I_1,I_2) = \tfrac{1}{25}>0$.
		It follows that $\Sigma$-countermonotonicity does not imply all the negative dependence properties stronger than p-NC, including the strongly Rayleigh property.
	\end{example}
	Example \ref{ex:ctexample1} shows that $\Sigma$-countermonotonic Bernoulli random vectors can have positively correlated pairs of random variables.
	This can be explained observing that if $\II\in \mathcal{B}_d(\pp)$, the sum of all the cross moments of order two depends only on the distribution of the sum $S_{\II}$, i.e., 
	\begin{equation} \label{eq:CrossMomentsSum}
		\sum_{1\leq j_1 < j_2 \leq d} E(I_{j_1}I_{j_2})
		=
		\frac{1}{d(d-1)}\sum_{y=2}^d y(y-1) \mathbb{P}(S_{\II}=y) = E[\psi_2(S_{\II})],
	\end{equation}
	where $\psi_2(y) = \tfrac{y(y-1)}{d(d-1)}$.
	The expression in \eqref{eq:CrossMomentsSum} takes the same value for every Bernoulli vector $\II \in \BBB_d^{\Sigma}(\pp)$ since
	\begin{equation*}
		\sum_{1\leq j_1 < j_2 \leq d} E(I_{j_1}I_{j_2})
		=
		\frac{1}{d(d-1)}\sum_{y=2}^d y(y-1) s_{p_{\bullet}}(y) = m(2 p_{\bullet} - m -1),
	\end{equation*}
	where $m$ is the highest integer smaller than or equal to $p_{\bullet}$.
	Since $\psi_2(y)$ is a convex function, this is the minimum value in the Fréchet class $\BBB_d(\pp)$.
	It can also be computed with the mean bivariate correlation that is constant for every Bernoulli random vector $\II \in \BBB_d^{\Sigma}(\pp)$.
	For this reason, if $\II \in \BBB_d^{\Sigma}(\pp)$ has some countermonotonic pairs, it will have other pairs with higher --- possibly positive --- correlation, to keep the mean correlation equal to the constant value of the class.
	
	Under which conditions the class $\BBB_d^{\Sigma}(\pp)$, $\pp\in (0,1)^d$ supports pairwise countermonotonicity is an important issue, see \cite{lauzier2023pairwise}.
	If $\II^* \in \BBB_d^{\Sigma}(\pp)$ is pairwise countermonotonic, then every pair $(I^*_{j_1},I^*_{j_2})$ is countermonotonic, and the bivariate correlation, given the marginal means $p_{j_1}$ and $p_{j_2}$, takes the lowest possible value.
	Consequently, it is impossible to increase or decrease a bivariate correlation while keeping the mean correlation constant. 
	Thus, if a class $\BBB_d^{\Sigma}(\pp)$ admits a pairwise countermonotonic Bernoulli random vector $\II^{*}$, its distribution is the unique one in the class.
	There are two cases when $\BBB_d^{\Sigma}(\pp)$ has a unique distribution: when the lower Fréchet bound $F_L$ is a distribution, i.e., when $p_{\bullet} \leq 1$ or $p_{\bullet} \geq d-1$. 
	A random vector with distribution $F_L$ is pairwise countermonotonic. 
	Therefore, a Fréchet class $\BBB_d(\pp)$ admits a pairwise countermonotonic Bernoulli random vector if and only if $p_{\bullet} \leq 1$ or $p_{\bullet} \geq d-1$.
	We have verified that $\Sigma$-countermonotonicity and strongly Rayleigh are neither equivalent nor hierarchical with proper counterexamples. In the next section, we study the strongly Rayleigh property within the class $\mathcal{B}_d^{\Sigma}(\pp)$. We conclude this section with the following result, that shows that the NLC property is implied both from the $\Sigma$-countermonotonicity and the strongly Rayleigh properties.
	\begin{proposition}\label{prop:NLC}
		If $\II \in \BBB^{\Sigma}_d$, then $\II$ has the NLC property.
	\end{proposition}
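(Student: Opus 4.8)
The plan is to show that the defining inequality of the NLC, namely $f_{\II}(\ii)f_{\II}(\kk)\ge f_{\II}(\ii\wedge\kk)f_{\II}(\ii\vee\kk)$ for all $\ii,\kk\in\{0,1\}^d$, holds automatically once we know that $\II\in\BBB_d^{\Sigma}$ has support contained in $\AAA_{d,m}\cup\AAA_{d,m+1}$ for some $m$ (Remark \ref{remark:astar}). The key observation is that the componentwise operations $\wedge$ and $\vee$ preserve the total number of ones only in very restricted situations: in general $|\ii\wedge\kk|+|\ii\vee\kk|=|\ii|+|\kk|$, where $|\cdot|$ denotes the number of ones, so if $\ii\in\AAA_{d,a}$ and $\kk\in\AAA_{d,b}$ then $\ii\wedge\kk\in\AAA_{d,a'}$ and $\ii\vee\kk\in\AAA_{d,b'}$ with $a'\le\min(a,b)$, $b'\ge\max(a,b)$, and $a'+b'=a+b$.

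First I would reduce to the case in which the right-hand side is nonzero, since otherwise the inequality is trivial because $f_{\II}\ge 0$. So assume $f_{\II}(\ii\wedge\kk)>0$ and $f_{\II}(\ii\vee\kk)>0$. Then both $\ii\wedge\kk$ and $\ii\vee\kk$ lie in the support of $f_{\II}$, hence in $\AAA_{d,m}\cup\AAA_{d,m+1}$, so $a'=|\ii\wedge\kk|$ and $b'=|\ii\vee\kk|$ both belong to $\{m,m+1\}$; in particular $b'-a'\le 1$. Combined with $a'\le\min(|\ii|,|\kk|)\le\max(|\ii|,|\kk|)\le b'$ and $a'+b'=|\ii|+|\kk|$, this forces $\{|\ii|,|\kk|\}\subseteq\{a',b'\}$, and since also $a'+b'=|\ii|+|\kk|$ we get $\{|\ii|,|\kk|\}=\{a',b'\}$ as multisets. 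Thus $|\ii|$ and $|\kk|$ themselves lie in $\{m,m+1\}$, so $\ii$ and $\kk$ are in the support region as well.

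Second I would argue that in fact either $\ii\wedge\kk=\ii$ and $\ii\vee\kk=\kk$ (or vice versa), i.e.\ $\ii$ and $\kk$ are comparable in the coordinatewise order. Indeed, if $|\ii|=|\kk|$ then from $a'\le|\ii|$ and $a'+b'=2|\ii|$ and $b'-a'\le 1$ we get $a'=b'=|\ii|=|\kk|$, which together with $a'+b'=|\ii|+|\kk|$ and $\ii\wedge\kk\le\ii,\kk\le\ii\vee\kk$ forces $\ii\wedge\kk=\ii=\kk=\ii\vee\kk$, so $\ii=\kk$ and the NLC inequality is an equality. If instead $|\ii|\ne|\kk|$, say $|\ii|=m$ and $|\kk|=m+1$, then $a'=m$, $b'=m+1$, so $\ii\wedge\kk$ has exactly $m$ ones while $\ii\wedge\kk\le\ii$ and $|\ii|=m$; hence $\ii\wedge\kk=\ii$, and similarly $\ii\vee\kk=\kk$. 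In every case $\{\ii\wedge\kk,\ii\vee\kk\}=\{\ii,\kk\}$ as multisets, so the NLC inequality holds with equality. The main thing to be careful about is the bookkeeping with the multiset identities $a'+b'=|\ii|+|\kk|$ together with the one-sided bounds $a'\le\min$, $b'\ge\max$; once that combinatorial squeeze is in place the probabilistic content is essentially vacuous, which is exactly why $\Sigma$-countermonotonicity (equivalently $\Sigma_{cx}$-minimality, via Theorem \ref{thm:EquivalentDef}) implies the NLC.
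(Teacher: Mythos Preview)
Your proof is correct and follows essentially the same approach as the paper's: both arguments use that the support of $f_{\II}$ is contained in $\AAA_{d,m}\cup\AAA_{d,m+1}$, reduce to the case where $\ii\wedge\kk$ and $\ii\vee\kk$ lie in this support, and then use the level identity $|\ii\wedge\kk|+|\ii\vee\kk|=|\ii|+|\kk|$ together with $\ii\wedge\kk\preceq\ii,\kk\preceq\ii\vee\kk$ to conclude $\{\ii\wedge\kk,\ii\vee\kk\}=\{\ii,\kk\}$. Your write-up is a bit more explicit about the combinatorial squeeze (making the multiset identity precise), whereas the paper handles the same two cases more tersely, but there is no substantive difference.
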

	\begin{proof}
		If $\II \in \BBB^{\Sigma}_d$, then there exists $m \in \{0,\dots,d-1\}$ such that $f_{\II}(\ii) = 0$ for every $\ii \notin \AAA^* = \AAA_{d,m} \cup \AAA_{d,m+1}$.
		Let $\ii, \kk \in \{0,1\}^d$ and let $\ww = \ii \wedge \boldsymbol{k}$ and $\zz = \ii \vee \boldsymbol{k}$. 
		We want to prove that the condition in \eqref{eq:mtp2} is verified for every $\ii, \kk \in \{0,1\}^d$.
		Without any restriction, we can assume $i_{\bullet} \leq k_{\bullet}$.
		We have $w_{\bullet} \leq i_{\bullet} \leq k_{\bullet} \leq z_{\bullet}$, and $\ww \preceq \zz$, where $\preceq$ denotes the usual partial order in $\RR^d$, i.e. $\ww \preceq \zz$ means that $w_j \leq z_j$ for every $j \in \{1,\dots,d\}$.
		If $\ww \notin \mathcal{A}^*$ or $\zz \notin \mathcal{A}^*$, then $f(\ww)f(\zz) = 0$ and the condition in \eqref{eq:mtp2} is verified.
		Consider now  $\ww, \zz \in \mathcal{A}^*$. 
		If $w_{\bullet} = z_{\bullet}$, then $\ww \preceq \zz$ implies $\ww = \zz$, and $\ww = \ii = \kk = \zz$; thus, the condition in \eqref{eq:mtp2} is verified.
		It is verified also if $w_{\bullet} = m$ and $z_{\bullet} = m+1$, since $\ww \preceq \zz$ implies $\ww = \ii$ and $\zz = \kk$.
	\end{proof}
	
	\subsection{$\Sigma$-Countermonotonicity and the strongly Rayleigh property}
	This section discusses the strongly Rayleigh property within the convex polytope $\mathcal{B}_d^{\Sigma}(\pp)$, $\pp\in(0,1)^d$, of $\Sigma$-countermonotonic Bernoulli vectors. 
	
	Assume that $\mathcal{B}_d(\pp)$ admits a Fr\'echet lower bound and let $\II^C\in\mathcal{B}_d(\pp)$ be the Bernoulli random vector such that $F_{\II^C}$ corresponds to the lower Fr\'echet bound $F_L$. Obviously, $\II^C$ is also minimal in all the dependence orders that satisfy Condition \ref{cond:bound}. 
	Given a Fréchet class $\BBB_d(\pp)$, if $p_{\bullet} \leq 1$, the $\Sigma$-countermonotonic vector $\II^C$ has sums with support on $\{0,1\}$. 
	Its pgf is a linear polynomial $\mathcal{P}^C(\zz)=a_0 + \sum_{j=1}^d a_j z_j$, where $a_0=f(\boldsymbol{0})$ and $a_j=f(\ee_j)$, $j \in \{1,\dots,d\}$, where $\boldsymbol{0}$ is a vector of all zeros and $(\ee_1,\dots,\ee_d)$ is the canonical basis of $\RR^d$. 
	As mentioned in Remark~\ref{ex:lin}, linear polynomials are stable. Therefore, $\II^C$ is strongly~Rayleigh. The case $p_{\bullet} \geq d-1$ is similar.
	
	\begin{example}
		We start with a simple example in the special case $\sum_{j=1}^d p_j\leq 1$, where the $\Sigma$-countermonotonic vector is the random vector $\II^C$ with the lower Fréchet bound as distribution. 
		In this case, the pgf $\PPP^C$ of $\II^C$ is given by
		\begin{equation*}
			\PPP^C(z_1,\dots,z_5) = \frac{3}{20} 
			+ \frac{1}{20}(3z_1 + z_2 + 5z_3 + 6z_4 + 2z_5),
		\end{equation*}
		and it is stable. 
		Then, $\boldsymbol{I}^C$ is pairwise countermonotonic and it satisfies the strongly Rayleigh property.
	\end{example}
	The general case is more challenging as $\BBB_d^{\Sigma}(\pp)$ is a convex polytope; meaning that there are plenty $\Sigma$-countermonotonic vectors in this class. The following proposition proves that there are pmfs in $\mathcal{B}_d^{\Sigma}$ that satisfy the strongly Rayleigh property.
	
	\begin{proposition}\label{exch}
		If $\II\in \mathcal{B}_d^{\Sigma}$ is exchangeable, then it satisfies the strongly Rayleigh property.
	\end{proposition}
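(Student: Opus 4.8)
The plan is to use exchangeability to force a very rigid form on the probability generating function of $\II$, and then to invoke the stability facts about elementary symmetric polynomials already recorded in the excerpt. First I would note that, since $\II\in\mathcal{B}_d^{\Sigma}$, Remark~\ref{remark:astar} gives an integer $m\in\{0,\dots,d-1\}$ such that $f_{\II}$ is supported on $\AAA^{*}=\AAA_{d,m}\cup\AAA_{d,m+1}$, so $\mathcal{P}_{\II}$ has the shape displayed in \eqref{pgfS}. Exchangeability means $f_{\II}(\ii)$ depends only on $i_{\bullet}=i_1+\dots+i_d$; hence there are constants $a,b\ge 0$, not both zero, with $a_{\ii}=a$ for all $\ii\in\AAA_{d,m}$ and $a_{\ii}=b$ for all $\ii\in\AAA_{d,m+1}$. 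Using the elementary identities $\sum_{\ii\in\AAA_{d,m}}\zz^{\ii}=\EEE_{d,m}(\zz)$ and $\sum_{\ii\in\AAA_{d,m+1}}\zz^{\ii}=\EEE_{d,m+1}(\zz)$ (immediate from the definitions of $\AAA_{d,m}$ and of the elementary symmetric polynomial), this rewrites as $\mathcal{P}_{\II}(\zz)=a\,\EEE_{d,m}(\zz)+b\,\EEE_{d,m+1}(\zz)$.

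Next I would recall the identity $\EEE_{d,m+1}+z_{d+1}\EEE_{d,m}=\EEE_{d+1,m+1}$ noted right after Theorem~\ref{thm:LinearComb_StrongRayleigh}, together with the fact from Remark~\ref{ex:lin} that every elementary symmetric polynomial is real stable. Consequently $\EEE_{d,m}$ and $\EEE_{d,m+1}$ satisfy condition~2 of Theorem~\ref{thm:LinearComb_StrongRayleigh}, so every non-zero polynomial in the family $\{\gamma_1\EEE_{d,m}+\gamma_2\EEE_{d,m+1}:\gamma_1,\gamma_2\in\RR\}$ is real stable. In particular $\mathcal{P}_{\II}=a\,\EEE_{d,m}+b\,\EEE_{d,m+1}$ is real stable, which is precisely the strongly Rayleigh property for $\II$. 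When $p_{\bullet}$ is an integer, so that $\II$ is a joint mix, one has $b=0$ (or, if $m=0$ is excluded, the analogous degenerate case $a=0$) and the conclusion follows directly from the stability of $\EEE_{d,m}$ in Remark~\ref{ex:lin}; but this is already subsumed in the statement of Theorem~\ref{thm:LinearComb_StrongRayleigh}, which asserts stability of \emph{every} non-zero member of the two-parameter family.

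I do not expect a real obstacle here: the proof is essentially a one-line reduction to Theorem~\ref{thm:LinearComb_StrongRayleigh}. The only points deserving a moment of care are (i) verifying that exchangeability genuinely makes $f_{\II}$ constant on each $\AAA_{d,m}$, which is immediate because permuting coordinates permutes $\AAA_{d,m}$ transitively, and (ii) making sure the normalisation constants $a,b$ are allowed to range over all non-negative reals rather than some restricted set — this causes no difficulty since Theorem~\ref{thm:LinearComb_StrongRayleigh} covers the full real span of the two polynomials.
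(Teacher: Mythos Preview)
Your argument is correct. You reduce $\mathcal{P}_{\II}$ to the form $a\,\EEE_{d,m}+b\,\EEE_{d,m+1}$ and then feed the pair $(\EEE_{d,m},\EEE_{d,m+1})$ into Theorem~\ref{thm:LinearComb_StrongRayleigh} via the identity $\EEE_{d,m+1}+z_{d+1}\EEE_{d,m}=\EEE_{d+1,m+1}$; this is sound and entirely self-contained with respect to the tools already stated in the paper.

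The paper takes a different, though equally short, route: it observes that the pgf of $S_{\II}$ is the univariate polynomial $s(m)z^{m}+s(m+1)z^{m+1}$ (or $z^{m}$ in the joint-mix case), which obviously has only real zeros, and then appeals to Theorem~3.8 of \cite{borcea2009negative}, which says that a \emph{symmetric} multi-affine polynomial is real stable if and only if its diagonal specialisation $z_1=\dots=z_d=z$ has only real zeros. So the paper exploits a characterisation of stability specific to symmetric polynomials, whereas you exploit the ``proper position'' machinery behind Theorem~\ref{thm:LinearComb_StrongRayleigh}. Your approach has the modest advantage of using only results explicitly recorded in the present paper; the paper's approach has the advantage of being conceptually transparent (real-rootedness of a single univariate polynomial) and of foreshadowing the role of the distribution of $S_{\II}$ that recurs later.
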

	
	\begin{proof}
		Since $\II\in \mathcal{B}_d^{\Sigma}$ is exchangeable, its pgf $\PPP_{\II}$ is symmetric.
		It also implies tat $S_{\II}$ is a joint mix or it has support on two consecutive points $m$ and $m+1$. 
		Consider the pgf of $S_{\II}$, that is the univariate polynomial $\PPP_S(z)=\sum_{j=0}^d s(j) z^j$, where $s$ 
		is the pmf of $S_{\II}$. 
		Case 1. $\II$ is joint mix. Then, there is $m \in \{0,\ldots, d\}$ such that $s(m)=1$. 
		It implies that the univariate polynomial $\PPP_S(z)=z^m$ has only real roots. 
		Since the pgf of $\II$ is a symmetric polynomial, by Theorem 3.8 in \cite{borcea2009negative}, $\II$ satisfies the strongly Rayleigh property. 
		Case 2. $S_{\II}$ has support on two consecutive points $m$ and $m+1$. 
		The univariate polynomial $\PPP_S(z)=s(m) z^m + s(m+1) z^{m+1}$ has only real roots. 
		By Theorem 3.8 in \cite{borcea2009negative}, since the pgf of $\II$ is a symmetric polynomial, $\II$ satisfies the strongly Rayleigh property.
	\end{proof}
	
	Let $\BBB_d(p)$ be the Fréchet class $\BBB_d(p,\dots,p)$, Proposition \ref{exch} also implies that for any $p\in(0,1)$, there is a pmf in $\mathcal{B}_d^{\Sigma}(p) =  \BBB_d^{\Sigma}\cap \BBB_d(p)$ that satisfies the strongly Rayleigh property.
	
	\begin{example}
		Consider the polynomial $\mathcal{P}(z_1,z_2,z_3)=\frac{s_2}{3}(z_1z_2+z_1z_3+z_2z_3)+z_1z_2z_3$. Let $\II\in \mathcal{B}_3(\frac{2s_2}{3}+s_3)$ be a Bernoulli random vector such that $\mathcal{P}_{\II}=\mathcal{P}.$ Then, $\II$ is exchangeable and, by Proposition \ref{exch}, it satisfies the strongly Rayleigh property. 
		If we consider the pmf in Table \ref{tab:SR}, its pgf given by $\mathcal{P}_{\II'}(z_1,z_2,z_3,z_4)=\mathcal{P}(z_1, z_2, z_3)$. 
		Then, it is strongly Rayleigh, but it is not exchangeable. 
		However, in this case $E[I'_4]=0$, and $\II'\in \mathcal{B}_4(\frac{2s_2}{3}+s_3, \frac{2s_2}{3}+s_3, \frac{2s_2}{3}+s_3,0)$ and $p_4\notin (0,1)$.  
		\begin{table}[H]
			\centering
			\begin{tabular}{c|cccc}
				\toprule
				$\ii$ & (1,1,0,0) &   (1,0,1,0) & (0,1,1,0) & (1,1,1,0) \\ 
				\midrule
				$f_{\II'}(\ii)$ & $\frac{s_2}{3}$ & $\frac{s_2}{3}$ &$\frac{s_2}{3}$ & $s_3$ \\
				\bottomrule
			\end{tabular}
			\caption{\emph{Values of $f\in \mathcal{B}_4^{\Sigma}(\frac{2s_2}{3}+s_3, \frac{2s_2}{3}+s_3, \frac{2s_2}{3}+s_3,0)$ with the strongly Rayleigh property}}
			\label{tab:SR}
		\end{table}
	\end{example}
	We prove below that if $f_{\II} \in \mathcal{B}_d^{\Sigma}(\pp)$ maximizes the entropy in $\mathcal{B}_d^{\Sigma}(\pp)$, then it satisfies the strongly Rayleigh property. 
	We recall that the  entropy $H(f_{\II})$ of a pmf $f_{\II} \in \mathcal{B}_d$ is defined as 
	\begin{equation}
		H(f_{\II}) = -\sum_{\ii \in \{0,1\}^d}
		f_{\II}(\ii) \ln(f_{\II}(\ii)).
		\label{eq:Entropy0}
	\end{equation}
	If $\II$ has pmf $f_{\II}$, we say that $\II$ has entropy $H(f_{\II})$.
	
	We prove that the distribution of $\II$ with maximal entropy pmf in $\mathcal{B}_d^{\Sigma}(\pp)$ belongs to the family of conditional Bernoulli distributions defined as follows.
	Consider a vector $\boldsymbol{K} = (K_1,\dots, K_d)$ of independent Bernoulli random variables with marginal means $\ppi = (\pi_1,\dots,\pi_d) \in (0,1)^d$.
	\begin{enumerate}
		\item
		For $m \in \{0,\dots,d\}$, let $\boldsymbol{I}^{(m)} = (I^{(m)}_1,\dots, I^{(m)}_d)$ be a Bernoulli random vector with joint pmf $f_{\ppi}^{(m)}$ given by
		\begin{equation}
			f_{\ppi}^{(m)}(\ii) = \mathbb{P} (\KK = \ii \vert K_1 + \dots K_d = m), \quad \ii \in \{0,1\}^d.
			\label{eq:ConditionningOneValue}
		\end{equation}
		
		\item
		For $m \in \{0,\dots,d-1\}$, let $\boldsymbol{I}^{(m+)} = (I^{(m+)}_1,\dots, I^{(m+)}_d)$ be a Bernoulli random vector with joint pmf $f_{\ppi}^{(m+)}$ given by
		\begin{equation}
			f_{\ppi}^{(m+)}(\ii) 
			= \mathbb{P}(\KK = \ii \vert m \leq K_1 + \dots K_d \leq m+1), \quad \ii \in \{0,1\}^d.
			\label{eq:ConditionningTwoValues}  
		\end{equation}
		
	\end{enumerate}
	
	As in \cite{chen2000general} (and references therein) or {Example 5.1 (Conditioned Bernoulli sampling)} of \cite{ghosh2017multivariate}, we provide the following definition.
	\begin{definition}
		The distribution of a Bernoulli random vector with pmf $f_{\II}$ that can be written in the form of \eqref{eq:ConditionningOneValue} or in the form of \eqref{eq:ConditionningTwoValues} is called a conditional Bernoulli distribution. A Bernoulli random vector $\II$ that follows a conditional Bernoulli distribution is referred to as a conditional Bernoulli vector.
	\end{definition}
	
	Conditional Bernoulli distributions have been extensively studied in the framework of sampling with unequal probabilities without replacement. 
	In the literature of this research field, they are also called conditional Poisson sampling; see Chapter 5 of \cite{tille2006sampling}.
	It is possible to state the following Theorem.
	Let us denote by $f^H$ the pmf of the class $\mathcal{B}_d^{\Sigma}(\pp)$ with the maximum entropy and by $\II^H=(I_1^H,\ldots, I_d^H)$ the corresponding Bernoulli random vector.
	\begin{theorem}\label{thm:maxentr}
		For any $\pp \in (0,1)^d$, there exists $\II \in \BBB_d^{\Sigma}(\pp)$ with pmf $f_{\II}$ that maximizes the entropy in the class $\BBB_d^{\Sigma}(\pp)$ that follows a conditional Bernoulli distribution and that satisfies the strongly Rayleigh property.
	\end{theorem}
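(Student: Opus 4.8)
The plan is to reduce Theorem~\ref{thm:maxentr} to three facts: that a unique entropy maximizer in $\BBB_d^{\Sigma}(\pp)$ exists, that it has a product (Gibbs) form on its support set and is therefore a conditional Bernoulli pmf, and that pgfs of this form are real stable because they are affine combinations of consecutive elementary symmetric polynomials in rescaled variables. First I would fix $m=\lfloor p_{\bullet}\rfloor$ and $\AAA^{*}=\AAA_{d,m}\cup\AAA_{d,m+1}$ and note, using Theorem~\ref{thm:EquivalentDef} together with Remark~\ref{remark:astar}, that
\[
  \BBB_d^{\Sigma}(\pp)=\Bigl\{\,f\colon\{0,1\}^d\to[0,1]\ :\ \textstyle\sum_{\ii}f(\ii)=1,\ \operatorname{supp}(f)\subseteq\AAA^{*},\ \sum_{\ii}f(\ii)\,i_j=p_j\ \ (j=1,\dots,d)\,\Bigr\}.
\]
(When $p_{\bullet}=m$ is an integer this degenerates to $\AAA^{*}=\AAA_{d,m}$ and to joint mixes; indeed, any $f$ supported on $\AAA^{*}$ has $S_{\II}\in\{m,m+1\}$ almost surely, so the constraint $E[S_{\II}]=p_{\bullet}$ forces the sum distribution to equal $s_{p_{\bullet}}$ of \eqref{eq:ExtrPoints_D}.) This description exhibits $\BBB_d^{\Sigma}(\pp)$ as a nonempty compact convex polytope, so the entropy $H$ of \eqref{eq:Entropy0}, being continuous and strictly concave, attains its maximum at a unique pmf $f^{H}$.

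The heart of the argument is to show that $f^{H}(\ii)=c\prod_{j=1}^{d}w_{j}^{\,i_j}$ for $\ii\in\AAA^{*}$ and some weights $w_{1},\dots,w_{d}>0$. I would obtain this from classical maximum-entropy duality: maximizing $H$ over the affine slice above, the only way the nonnegativity constraints can fail to be inactive is if $f^{H}(\ii_0)=0$ for some $\ii_0\in\AAA^{*}$, but then moving $f^{H}$ a little toward any full-support feasible pmf strictly increases $H$, since $\partial H/\partial f(\ii_0)\to+\infty$ as $f(\ii_0)\to0^{+}$; hence $f^{H}$ has full support on $\AAA^{*}$ and the stationarity conditions for the equality-constrained problem give $\ln f^{H}(\ii)=\lambda-1+\sum_{j}\theta_j i_j$ on $\AAA^{*}$, i.e. the claimed form with $w_j=e^{\theta_j}>0$. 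The input needed here --- existence of a feasible pmf with full support on $\AAA^{*}$, equivalently finiteness of the Lagrange multipliers --- is exactly where the hypothesis $\pp\in(0,1)^d$ enters: it places $\pp$ in the relative interior of the moment polytope $\operatorname{conv}(\AAA^{*})=\{\xx\in[0,1]^d:x_{\bullet}\in[m,m+1]\}$, and the (well known) fact that the mean maps of the conditional Bernoulli families $\{f^{(m+)}_{\ppi}:\ppi\in(0,1)^d\}$ and $\{f^{(m)}_{\ppi}:\ppi\in(0,1)^d\}$ are onto the respective relative interiors then supplies the required full-support feasible pmf, see \cite{chen2000general}. Finally, setting $\pi_j=w_j/(1+w_j)\in(0,1)$ gives $w_j=\pi_j/(1-\pi_j)$, and comparing with \eqref{eq:ConditionningTwoValues} (or with \eqref{eq:ConditionningOneValue} when $p_{\bullet}$ is an integer) shows $f^{H}=f^{(m+)}_{\ppi}$ (respectively $f^{(m)}_{\ppi}$), so $\II^{H}$ follows a conditional Bernoulli distribution.

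It remains to check that $\II^{H}$ is strongly Rayleigh. From the product form of $f^{H}$ and the identity $\sum_{\ii\in\AAA_{d,k}}\prod_j(w_jz_j)^{i_j}=\EEE_{d,k}(w_1z_1,\dots,w_dz_d)$, its pgf is
\[
  \PPP_{\II^{H}}(\zz)=c\,\bigl[\EEE_{d,m}(w_1z_1,\dots,w_dz_d)+\EEE_{d,m+1}(w_1z_1,\dots,w_dz_d)\bigr],
\]
the second summand being absent when $p_{\bullet}$ is an integer. By Remark~\ref{ex:lin}, $\EEE_{d,m}$ and $\EEE_{d,m+1}$ are real stable; since $\EEE_{d,m+1}+z_{d+1}\EEE_{d,m}=\EEE_{d+1,m+1}$ is real stable, Theorem~\ref{thm:LinearComb_StrongRayleigh} gives that every nonzero polynomial in $\{\gamma_1\EEE_{d,m+1}+\gamma_2\EEE_{d,m}\}$ is real stable, in particular $\EEE_{d,m}+\EEE_{d,m+1}$; and real stability is preserved by the positive rescaling $z_j\mapsto w_jz_j$ and by multiplication by $c\neq0$, by point~\ref{point:scaling} of Lemma~\ref{lemma:StabilityPreservers}. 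Hence $\PPP_{\II^{H}}$ is real stable, i.e. $\II^{H}$ has the strongly Rayleigh property. The main obstacle is the middle step: rigorously pinning down the Gibbs/exponential-family form of $f^{H}$ with \emph{finite} parameters and full support on $\AAA^{*}$ --- once this is in place, identifying $f^{H}$ as a conditional Bernoulli pmf and deducing real stability of its pgf are routine given Remark~\ref{ex:lin}, Theorem~\ref{thm:LinearComb_StrongRayleigh}, and Lemma~\ref{lemma:StabilityPreservers}.
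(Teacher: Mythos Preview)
Your proposal is correct and its overall architecture matches the paper's: existence of the entropy maximizer on the compact polytope $\BBB_d^{\Sigma}(\pp)$, identification of $f^H$ with a conditional Bernoulli pmf via exponential-family/Gibbs reasoning (the paper invokes Theorem~3.6 of \cite{brown1986fundamentals} for the bijectivity of the mean map, you use Lagrange stationarity plus the surjectivity of the mean map from \cite{chen2000general} to secure full support and finite multipliers), and finally the strongly Rayleigh property. The one genuinely different choice is in this last step. The paper's proof obtains the strongly Rayleigh property in one line by observing that the independence vector $\KK$ is strongly Rayleigh and that Corollary~4.18 of \cite{borcea2009negative} says conditioning on $\{S_{\KK}=m\}$ or $\{S_{\KK}\in\{m,m+1\}\}$ preserves the property. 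You instead compute $\PPP_{\II^H}$ explicitly as $c\,[\EEE_{d,m}(w_1z_1,\dots,w_dz_d)+\EEE_{d,m+1}(w_1z_1,\dots,w_dz_d)]$ and deduce stability from Remark~\ref{ex:lin}, the identity $\EEE_{d,m+1}+z_{d+1}\EEE_{d,m}=\EEE_{d+1,m+1}$ together with Theorem~\ref{thm:LinearComb_StrongRayleigh}, and the rescaling in Lemma~\ref{lemma:StabilityPreservers}\ref{point:scaling}. The paper in fact records exactly your argument as an alternative proof in the remark following Corollary~\ref{cor:pgf_fH}; its virtue is that it uses only tools already set up in the paper, while the paper's main route is shorter because it outsources the stability to the conditioning-closure result of \cite{borcea2009negative}.
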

	\begin{proof}
		Since the entropy $H(f_{\II})$, defined in \eqref{eq:Entropy0}, is a continuous function of $f_{\II}$ and $\BBB_d^{\Sigma}(\pp)$ is a convex polytope, $f^H$ always exists. Consider the conditional Bernoulli vectors $\II^{(m)}$ and $\II^{(m+)}$ with pmfs $f_{\ppi}^{(m)}$ in \eqref{eq:ConditionningOneValue} and $f_{\ppi}^{(m+)}$ in \eqref{eq:ConditionningTwoValues}, respectively, and let $p_j = E[I_j^{(m)}]$ and $p'_j = E[I_j^{(m+)}]$, for $j = 1,\dots,d$.
		By construction, it is clear that the Bernoulli random vector $\II^{(m)}$ is a joint mix with $\mathbb{P}(\sum_{j=1}^d I^{(m)}_j = m) = 1$ and the Bernoulli random vector $\II^{(m+)}$ is $\Sigma$-countermonotonic, i.e. the sum $\sum_{j=1}^d I^{(m+)}_j$ takes value $m$ with probability $m+1-p_{\bullet}'$ and $m+1$ with probability $p_{\bullet}'-m$, where $p_{\bullet}' = \sum_{j=1}^d p'_j$.
		Since $\KK = (K_1,\dots,K_d)$ is a vector of independent Bernoulli random variables, it satisfies the strongly Rayleigh property.
		Therefore, from Corollary 4.18 of \cite{borcea2009negative}, $\II^{(m)}$ and $\II^{(m+)}$ are strongly Rayleigh.
		It is easily proved that that the Bernoulli random vector $\II^{(m)}$ with pmf $f_{\ppi}^{(m)} \in \BBB_d^{\Sigma}(p_1,\dots,p_d)$ maximizes the entropy in the class $\BBB_d^{\Sigma}(p_1,\dots,p_d)$ (see Section 1 of \cite{chen1994weighted}).
		A straightforward generalization of this result shows that this also holds for $\II^{(m+)}$ with pmf $f_{\ppi}^{(m+)} \in \BBB_d^{\Sigma}(p'_1,\dots,p'_d)$.
		The thesis follows if we prove that for every $\pp \in (0,1)^d$, the pmf $f^H \in \BBB_d^{\Sigma}(\pp)$ with the maximum entropy is a conditional Bernoulli distribution of the form of \eqref{eq:ConditionningOneValue} or \eqref{eq:ConditionningTwoValues}, for some $\ppi \in (0,1)^d$.
		Fix $\pp \in (0,1)^d$.
		If $p_{\bullet} = m \in \{0,\dots,d\}$, Theorem 1 of \cite{chen1994weighted} ensures that there exists $\ppi = (\pi_1,\dots,\pi_d) \in (0,1)^d$ --- unique up to rescaling the ratios $\tfrac{\pi_j}{1-\pi_j}$ --- and a vector $\KK=(K_1,\dots,K_d)$ of independent random variables with marginal means $(\pi_1,\dots,\pi_d)$ such that $f^H$ is given by \eqref{eq:ConditionningOneValue}. 
		Let us now consider the case $f^H \in \BBB_d^{\Sigma}(\pp)$ with $p_{\bullet} \in (m, m+1)$, for some $m \in (0,\dots,d-1)$.
		It is straightforward to verify conditional Bernoulli distribution $f_{\ppi}^{(m+)}$ of the form of \eqref{eq:ConditionningTwoValues} can be written as follows:
		\begin{equation*}
			f_{\ppi}^{(m+)}(\ii) = f^{(m+)}_{\boldsymbol{\theta}}(\ii) = \exp \bigg\{ \sum_{j=1}^d \theta_j i_j - \psi(\theta_1,\dots,\theta_d) \bigg\}, \quad \ii \in \AAA^*,
		\end{equation*}
		where $\theta_j = \log \tfrac{\pi_j}{1-\pi_j}$, for $j \in \{1,\dots,d\}$, $\psi(\boldsymbol{\theta}) = \log \left( \sum_{\ii \in \mathcal{A}^*} \exp \{ \sum_{j=1}^d \theta_j i_j\} \right)$, and $\mathcal{A}^* = \mathcal{A}_{d,m} \cup \mathcal{A}_{d,m+1}$.
		This shows that the class of conditional Bernoulli distributions belongs to the family of exponential distributions.
		Moreover, it can be shown that, for every $m \in (0,\dots,d-1)$, the class of conditional Bernoulli distributions with pmfs of the form of \eqref{eq:ConditionningTwoValues} is a minimal regular standard exponential family, see \cite{brown1986fundamentals}.
		Therefore, by Theorem 3.6 of \cite{brown1986fundamentals}, the map between the natural parameters $\theta_1,\dots,\theta_d$ and the marginal means $p_1,\dots,p_d$ is one to one. 
		This implies that, there exists a unique $\ppi^* = (\pi^*_1,\dots,\pi^*_d) \in (0,1)^d$ such that $f^H = f_{\ppi}^{(m+)}$.
	\end{proof}

	Since every Fréchet class supports a $\Sigma$-countermonotonic vector, Theorem \ref{thm:maxentr} implies that every Fréchet class admits a Bernoulli random vector that satisfies both the strongly Rayleigh and the $\Sigma$-countermonotonicity property.
	\begin{remark} 
		\label{rmk:ConditionalBernoulli}
		We observe the following facts from the proof of Theorem \ref{thm:maxentr}. The Bernoulli random vector  $\boldsymbol{I}^{(m)}$  satisfies the strongly Rayleigh and the joint mixability properties. Thus, it is $\Sigma$-countermonotonic.
		Moreover, $\II^{(m)}$ has the maximal entropy pmf in the class $\BBB_d^{\Sigma}(\pp)$, where $\pp = (p_1,\dots,p_d)$ and $p_j = E[I^{(m)}_j]$, for $j \in \{1,\dots,d\}$.
		The vector, $\boldsymbol{I}^{(m+)}$ satisfies the strongly Rayleigh property and it is the $\Sigma$-countermonotonic Bernoulli random vector with the maximal entropy pmf in the class $\BBB_d^{\Sigma}(\pp')$, where $\pp' = (p'_1,\dots,p'_d)$ and $p'_j = E[I^{(m+)}_j]$, for $j \in \{1,\dots,d\}$. 
		
	\end{remark}
	We investigate below the construction of $f^H$ as a conditional Bernoulli distribution.
	We explicitly derive the expression of a conditional Bernoulli pmf in the two cases specified by \eqref{eq:ConditionningOneValue} and \eqref{eq:ConditionningTwoValues}.
	Let $\boldsymbol{K} = (K_1,\dots,K_d)$ be a vector of $d$ independent Bernoulli random variables with $K_j \sim Bern(\pi_j)$ with $0 < \pi_j < 1$ for $j \in \{1,\dots,d\}$.
	The joint pmf $g$ of the random vector $\KK$ is given by
	\begin{equation*}
		g(\ii; \ppi) 
		=
		\prod_{j=1}^d
		\pi_j^{i_j} (1-\pi_j)^{(1-i_j)},
		\quad
		\text{for } \ii \in \{0,1\}^d.
	\end{equation*}
	Moreover, we denote by $\gamma$ the univariate pmf of $\sum_{j=1}^d K_j$, that is
	\begin{equation*}
		\gamma(m;\ppi)
		= \sum_{\ii \in \mathcal{A}_{d,m}}
		g(\ii; \ppi),
		\quad
		\text{for } m \in \{0,\dots,d\}.
	\end{equation*}
	
	Fix $m \in \{0,\dots,d\}$.
	From \eqref{eq:ConditionningOneValue}, the values of the conditional Bernoulli distribution $f_{\ppi}^{(m)}$ of a conditional Bernoulli vector $\II^{(m)}$ with the joint mixability property is given by 
	\begin{align}
		f_{\ppi}^{(m)}(\ii) 
		= \begin{cases}
			\frac{1}{\gamma(m;\ppi)} g(\ii;\ppi),
			&  \ii \in \mathcal{A}_{d,m}  \\
			0, & \ii \notin \mathcal{A}_{d,m}.
		\end{cases}
		\label{eq:PmfofVectorI}
	\end{align}
	
	The conditional Bernoulli pmf $f_{\ppi}^{(m)}$ that we obtain in \eqref{eq:PmfofVectorI} belongs to a class $\BBB_d^{\Sigma}(\pp)$, for some $\pp = (p_1,\dots,p_d)$ such that $p_{\bullet} = m$.
	
	Now, fix two consecutive integers, $m \in \{0,\dots,d-1\}$ and $m+1$. 
	From \eqref{eq:ConditionningTwoValues}, the values of the conditional Bernoulli distribution $f_{\ppi}^{(m+)}$ of a conditional Bernoulli vector $\II^{(m+)}$ with the $\Sigma$-countermonotonicity property is given by 
	\begin{align}
		f_{\ppi}^{(m+)}(\ii) 
		= \begin{cases}
			\frac{1}{\gamma(m;\ppi) + \gamma(m+1;\ppi)} g(\ii;\ppi),
			&  \ii \in \mathcal{A}_{d,m} \cup \mathcal{A}_{d,m+1}  \\
			0, & \ii \notin (\mathcal{A}_{d,m} \cup \mathcal{A}_{d,m+1})
		\end{cases}.
		\label{eq:PmfofVectorI+}
	\end{align}
	The conditional Bernoulli pmf $f_{\ppi}^{(m+)}$ that we obtain in \eqref{eq:PmfofVectorI+} is a member of a class $\BBB_d^{\Sigma}(\pp)$, for some $\pp = (p_1,\dots,p_d)$ such that $p_{\bullet} \in (m,m+1)$.
	
	By the proof of Theorem \ref{thm:maxentr}, given an integer $m$ or a couple of consecutive integers $m$ and $m+1$, the map between the marginal means $\ppi$ of the independence vector $\KK$ and the marginal means $\pp$ of the conditional Bernoulli distribution is one-to-one (up to rescaling when we condition on a single value $m$).
	Given $\ppi \in (0,1)^d$, the values $\pp$ can be computed by evaluating the marginal means of the conditional Bernoulli pmfs in \eqref{eq:PmfofVectorI} or \eqref{eq:PmfofVectorI+}.
	
	However, for a fixed class $\BBB_d^{\Sigma}(\pp)$, finding the values of the marginal means $\ppi^*$ that generate the conditional Bernoulli that belongs to this class is not straightforward. 
	The authors of \cite{chen1994weighted} proposed a geometrically convergent algorithm, while here we present the solution of this problem as a solution of an optimization problem.
	Given any $\pp \in (0,1)^d$, let us define a conditional Bernoulli pmf $f^{\text{CB}}_{\ppi}$ by
	\begin{equation*}
		f^{\text{CB}}_{\ppi} =
		\begin{cases}
			f^{(m)}_{\ppi},  &\text{if } p_{\bullet} = m \in \{0,\dots,d\}
			\\
			f^{(m+)}_{\ppi},  &\text{if } p_{\bullet} \in (m,m+1) \text{ for } m \in \{0,\dots,d-1\}
		\end{cases}.
	\end{equation*}
	We define the following maximization problem,
	\begin{equation}
		\ppi^*=\text{arg}\max_{\ppi \in(0,1)^d} H(f_{\ppi}^{\text{CB}}),
		\label{eq:MaxProblem}
	\end{equation}
	subject to the constraints
	\begin{equation}
		\sum_{\ii \in \{0,1\}^d} x_j f_{\ppi}^{\text{CB}}(\ii) = p_j,
		\quad j = 1,\dots,d.
		\label{eq:constraints}
	\end{equation}
	Notice that for $f_{\ppi}^{\text{CB}} \in \mathcal{B}_d^{\Sigma}(\pp)$, the expression of the entropy in \eqref{eq:Entropy0} becomes
	\begin{equation} \label{eq:Entropy}
		H(f_{\ppi}^{\text{CB}})=-\sum_{\ii \in \mathcal{A}^*}
		f_{\ppi}^{\text{CB}}(\ii) \ln(f_{\ppi}^{\text{CB}}(\ii)).
	\end{equation}
	where $\AAA^* = \AAA_{d,m}$, if $p_{\bullet} = m$, or $\AAA^* = \AAA_{d,m} \cup \AAA_{d,m+1}$, if $p_{\bullet} \in (m,m+1)$.
	Therefore, given a Fréchet class $\BBB_d(\pp)$, a Bernoulli random vector that satisfies both the strongly Rayleigh property and the $\Sigma$-countermonotonicity property is the maximum entropy Bernoulli random vector $\II^H$ with pmf given by $f^H = f^{\text{CB}}_{\ppi^*}$.
	
	Finally, since Proposition \ref{prop:polcx} proves that $\mathcal{B}_d^{\Sigma}(\pp)$ is a convex polytope with $n_d^{\Sigma}(\pp)$ extremal points, we have also the following representation:
	\begin{equation}
		f^H(\ii)
		=
		\sum_{k=1}^{n_d^{\Sigma}(\pp)}
		\alpha_k f_{\RRR_k}(\ii),
		\quad
		\ii \in \{0,1\}^d,
		\label{eq:ConvexCombi}
	\end{equation}
	where $f_{\RRR_k},\, k=1,\ldots n_d^{\Sigma}(\pp)$, are the extremal points  of $\mathcal{B}_d^{\Sigma}(\pp)$ and $\sum_{k=1}^{n_d^{\Sigma}(\pp)} \alpha_k = 1$ with $ \alpha_k \geq 0$ for every $k = 1,\ldots, n_d^{\Sigma}(\pp)$.
	We illustrate this methodology in the following example.
	
	\begin{example} 
		Consider the convex polytope $\BBB_4^{\Sigma}(\pp)$ with $\pp = \left(\tfrac{7}{20},\tfrac{9}{20},\tfrac{10}{20},\tfrac{14}{20}\right)$. 
		Given that $p_{\bullet}=2$, the class $\BBB_4^{\Sigma}(\pp)$ is the set of joint pmfs of the vectors of four Bernoulli random variables satisfying the joint mixability property, i.e., whose sum is equal to $2$ with probability one.
		This convex polytope has three extremal points, denoted by $f_{\RRR_{1}}$, $f_{\RRR_{2}}$, and $f_{\RRR_{3}}$ (as defined in \eqref{eq:PmfExtremalPoints}), that are provided in Table~\ref{tab:ExtremalPtsJointMix}.
		The pmf $f^{H}$ of the Bernoulli random vector $\II^{H}$ satisfying both the joint mixability property and the strongly Rayleigh property is obtained with \eqref{eq:PmfofVectorI}; it maximizes \eqref{eq:Entropy} with the constraints in \eqref{eq:constraints}. The values of $f^{H}$ are provided in Table~\ref{tab:ExtremalPtsJointMix}.
		Any pmf $f_{\II} \in \BBB_4^{\Sigma}(\pp)$ admits the following representation:
		\begin{equation}
			f_{\II}(\xx)
			=
			\alpha_1 f_{\RRR_{1}}(\ii)
			+
			\alpha_2 f_{\RRR_{2}}(\ii)
			+
			\alpha_3 f_{\RRR_{3}}(\ii),
			\quad
			\ii \in \mathcal{A}_{d,m},
			\label{eq:ConvexCombi_example}
		\end{equation}
		where $\alpha_1, \alpha_2, \alpha_3 \geq 0$ and $\alpha_1 + \alpha_2 + \alpha_3 = 1$. 
		For $f^{H}$, the values of the coefficients in \eqref{eq:ConvexCombi_example} are $(\alpha_1, \alpha_2, \alpha_3) = (0.4235, 0.3090, 0.2675)$.
		
		\begin{table}[H]
			\centering
			\begin{tabular}{c|cccccc|c}
				\toprule
				$\ii$ & (1,1,0,0) & (1,0,1,0) & (0,1,1,0) & (1,0,0,1) & (0,1,0,1) & (0,0,1,1) & H \\ 
				\midrule
				$f_{\RRR_{1}}(\ii)$ & 0 & 0 & $\tfrac{3}{10}$ & $\tfrac{7}{20}$ & $\tfrac{3}{20}$ & $\tfrac{1}{5}$ & $1.3351$ \\
				$f_{\RRR_{2}}(\ii)$ & 0 & $\tfrac{3}{10}$ & 0 & $\tfrac{1}{20}$ & $\tfrac{9}{20}$ & $\tfrac{1}{5}$ & $1.1922$ \\
				$f_{\RRR_{3}}(\ii)$ & $\tfrac{3}{10}$ & 0 & 0 & $\tfrac{1}{20}$ & $\tfrac{3}{20}$ & $\tfrac{1}{2}$ & $1.1421$  \\
				$f^{H}(\ii)$ & 
				$0.0802$ & $0.0927$ 
				& $ 0.1771$ & $0.1271$ 
				& $ 0.2427$ & $0.2803$ & $1.6917$ \\
				\bottomrule
			\end{tabular}
			\caption{\emph{Values of the 3 extremal points of $\BBB_4^{\Sigma}(\pp)$, where $\pp = \left(\tfrac{7}{20},\tfrac{9}{20},\tfrac{10}{20},\tfrac{14}{20}\right)$, and values of $f^{H}$.}}
			\label{tab:ExtremalPtsJointMix}
		\end{table}
		
	\end{example}
	
	The following example considers the case with $p_1= \dots = p_d = p$ leading to an explicit form of the pmf maximizing the entropy in \eqref{eq:Entropy0}, that is, the pmf of an exchangeable Bernoulli random vector in the class $\mathcal{B}_d^{\Sigma}(p)$, already proven to satisfy the strongly Rayleigh property in Proposition \ref{exch}.
	
	\begin{example} \label{ex:CaseEqualp}
		Let $\boldsymbol{K}$ be a vector of independent Bernoulli random variables with $K_i\sim Bern (\pi)$, for $i=1,\ldots, d$. To recover a pmf satisfying both the strongly Rayleigh and the joint mixability properties, fix $m \in \{0,\dots,d\}$ and for $\ii \in \mathcal{A}_{d,m}$, we have
		\begin{equation*}
			\begin{split}
				f_{\pi}^{(m)}(\ii) 
				&= \mathbb{P}(K_1 = i_1, \dots, K_d = i_d \vert K_1 + \dots + K_d = m)\\
				&=\frac{\pi^m(1-\pi)^{d-m}}{\sum_{\ii\in\mathcal{A}_{d,m}}\pi^m(1-\pi)^{d-m}}=\frac{\pi^m(1-\pi)^{d-m}}{\binom{d}{m}\pi^m(1-\pi)^{d-m}},
			\end{split}
		\end{equation*}
		which becomes
		\begin{equation} \label{eq:pmf_fm_example}
			f_{\pi}^{(m)}(\ii) =\frac{1}{\binom{d}{m}}.
		\end{equation}
		As discussed in Remark \ref{rmk:ConditionalBernoulli}, $f_{\pi}^{(m)}$ is the maximum entropy pmf in  $\BBB_d^{\Sigma}(p)$, where $p = \frac{m}{d}$. Notice that $p$ and $f_{\pi}^{(m)}$ in \eqref{eq:pmf_fm_example} do not depend on the initial probability $\pi$.
		Let us now consider the $\Sigma$-countermonotonic case. 
		Fix $m \in \{0,\dots,d-1\}$, then from \eqref{eq:ConditionningTwoValues} we have
		\begin{equation*}
			f_{\pi}^{(m+)}(\ii) 
			=
			\begin{cases}
				\mathbb{P}(K_1 = i_1, \dots, K_d = i_d \vert m \leq K_1 + \dots + K_d \leq m+1),
				& \ii \in \mathcal{A}_{d,m} \cup \mathcal{A}_{d,m+1} \\
				0, & \text{otherwise}
			\end{cases}.
		\end{equation*}
		In this case, the sum $\sum_{j=1}^d K_j$ takes value $m$ with probability $\gamma(m;\pi) = \binom{d}{m} \pi^m (1-\pi)^{d-m}$ and value $m+1$ with probability $\gamma(m+1;\pi) = \binom{d}{m+1} \pi^{m+1} (1-\pi)^{d-m-1}$.
		Therefore, we have
		\begin{equation*}
			f_{\pi}^{(m+)}(\ii) 
			=
			\begin{cases}
				\frac{\pi^m (1-\pi)^{d-m}}{\gamma(m;\pi) + \gamma(m+1;\pi)},
				& \ii \in \mathcal{A}_{d,m} \\
				\frac{\pi^{m+1} (1-\pi)^{d-m-1}}{\gamma(m;\pi) + \gamma(m+1;\pi)},
				& \ii \in \mathcal{A}_{d,m+1} \\
				0, & \ii \notin (\mathcal{A}_{d,m} \cup \mathcal{A}_{d,m+1})
			\end{cases},
		\end{equation*}
		which becomes
		\begin{equation*}
			f_{\pi}^{(m+)}(\ii) 
			=
			\begin{cases}
				\frac{1-\pi}{\eta},
				& \ii \in \mathcal{A}_{d,m} \\
				\frac{\pi}{\eta},
				& \ii \in \mathcal{A}_{d,m+1} \\
				0, & \ii \notin (\mathcal{A}_{d,m} \cup \mathcal{A}_{d,m+1})
			\end{cases},
		\end{equation*}
		where $\eta = \binom{d}{m}(1-\pi) + \binom{d}{m+1} \pi$.
		This is the maximum entropy pmf in the class $\BBB_d^{\Sigma}(p)$, where 
		\begin{equation*}
			p = \frac{m+1}{d} \frac{m(1-\pi) + (d-m)\pi}{(m+1)(1-\pi) + (d-m)\pi}.
		\end{equation*}
		In this case, the final marginal mean $p$ depends on the starting probability $\pi$.
		The values of  the pmf $s_{\pi}^{(m+)}$ of $S^{(m+)} = \sum_{j=1}^d I_j^{(m+)}$, where $\II^{(m+)} = (I_1^{(m+)}, \dots, I_d^{(m+)})$ is a Bernoulli vector with pmf $f_{\pi}^{(m+)}$, is given by
		\begin{equation*}
			s_{\pi}^{(m+)}(m) = \frac{\binom{d}{m}(1-\pi)}{\binom{d}{m}(1-\pi) + \binom{d}{m+1}\pi},
			\quad \text{and} \quad
			s_{\pi}^{(m+)}(m+1) = \frac{\binom{d}{m+1}\pi}{\binom{d}{m}(1-\pi) + \binom{d}{m+1}\pi},
		\end{equation*}
		and zero elsewhere.
		The parameter $\pi$ can be interpreted as the parameter that moves the expected value of $S^{(m+)}$ between $m$ and $m+1$.
		Notice that the pmf $s_{\pi}^{(m+)}$ coincides with $s_{p_{\bullet}}$ in \eqref{eq:ExtrPoints_D}, with $p_{\bullet} = pd$.
		
	\end{example}
	
	The following Corollary characterizes the pgf associated to the maximal entropy pmf $f^H$ in terms of its pgf in  \eqref{pgfS} and \eqref{pgfS2}.
	\begin{corollary} \label{cor:pgf_fH}
		Let us consider the maximal entropy pmf $f^H$ in $\BBB_d^{\Sigma}(\pp)$.
		Then, there exists $\aa = (a_1,\ldots, a_d)$, with $a_j >0$, such that the pgf is given by
		\begin{equation} \label{eq:pgf_maxentr}
			\mathcal{P}^H(\zz)\propto\sum_{\ii \in \mathcal{A}^*}\aa^{\ii}\zz^{\ii},
		\end{equation}
		where $\mathcal{A}^* = \mathcal{A}_{d,p_{\bullet}}$ if $p_{\bullet}$ is integer, or $\mathcal{A}^* = \mathcal{A}_{d,m} \cup \mathcal{A}_{d,m+1}$ if $p_{\bullet} \in (m,m+1)$, for $m \in (0,\dots,d-1)$.
	\end{corollary}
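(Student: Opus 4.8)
The plan is to reduce the statement to the explicit description of $f^H$ already obtained in Theorem \ref{thm:maxentr}. That theorem guarantees that $f^H$ is a conditional Bernoulli pmf, so there exists $\ppi = (\pi_1,\dots,\pi_d) \in (0,1)^d$ such that $f^H = f_{\ppi}^{(m)}$, given by \eqref{eq:PmfofVectorI}, when $p_{\bullet} = m$ is an integer, and $f^H = f_{\ppi}^{(m+)}$, given by \eqref{eq:PmfofVectorI+}, when $p_{\bullet} \in (m,m+1)$. In either case, on the support $\mathcal{A}^*$ (which equals $\mathcal{A}_{d,m}$ or $\mathcal{A}_{d,m}\cup\mathcal{A}_{d,m+1}$) the value $f^H(\ii)$ is a fixed positive multiple of $g(\ii;\ppi) = \prod_{j=1}^d \pi_j^{i_j}(1-\pi_j)^{1-i_j}$; the multiplicative constant is $1/\gamma(m;\ppi)$ or $1/(\gamma(m;\ppi)+\gamma(m+1;\ppi))$ and does not depend on $\ii$.

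Next I would factor $g(\cdot\,;\ppi)$. Writing $\prod_{j=1}^d \pi_j^{i_j}(1-\pi_j)^{1-i_j} = \big(\prod_{j=1}^d (1-\pi_j)\big)\prod_{j=1}^d \big(\tfrac{\pi_j}{1-\pi_j}\big)^{i_j}$ and setting $a_j := \tfrac{\pi_j}{1-\pi_j}$, which is strictly positive because $\pi_j \in (0,1)$, gives $g(\ii;\ppi) = \big(\prod_{j=1}^d (1-\pi_j)\big)\,\aa^{\ii}$. The bracketed factor is a positive constant independent of $\ii$, so $f^H(\ii)$ is proportional to $\aa^{\ii}$ for every $\ii \in \mathcal{A}^*$. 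Since $f^H$ vanishes outside $\mathcal{A}^*$, the pgf is
\begin{equation*}
\mathcal{P}^H(\zz) = \sum_{\ii\in\{0,1\}^d} f^H(\ii)\,\zz^{\ii} = \sum_{\ii\in\mathcal{A}^*} f^H(\ii)\,\zz^{\ii} \propto \sum_{\ii\in\mathcal{A}^*} \aa^{\ii}\zz^{\ii},
\end{equation*}
which is exactly \eqref{eq:pgf_maxentr}.

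There is no genuine obstacle here: the substance of the corollary is already contained in Theorem \ref{thm:maxentr} and in formulas \eqref{eq:PmfofVectorI}--\eqref{eq:PmfofVectorI+}, and what remains is the algebraic observation that an independent-Bernoulli weight restricted to one or two consecutive ``levels'' of $\{0,1\}^d$ is, up to a global constant, the pure monomial weight $\aa^{\ii}$. The only point I would check with some care is that in the two-level case $p_{\bullet}\in(m,m+1)$ the same vector $\aa$ serves simultaneously on $\mathcal{A}_{d,m}$ and on $\mathcal{A}_{d,m+1}$; this holds because the factorization of $g(\ii;\ppi)$ above is valid for every $\ii\in\{0,1\}^d$ irrespective of $i_\bullet$, the differing normalizations of the two levels being absorbed into the single proportionality constant.
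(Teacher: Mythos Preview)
Your proof is correct and follows essentially the same route as the paper: both invoke Theorem~\ref{thm:maxentr} to write $f^H$ as a conditional Bernoulli pmf, set $a_j=\pi_j/(1-\pi_j)$, and observe that the independent-Bernoulli weight factors as a constant times $\aa^{\ii}$ so that restricting to $\mathcal{A}^*$ gives \eqref{eq:pgf_maxentr}. The only cosmetic difference is that the paper phrases the factorization at the level of the pgf $\mathcal{P}^{\perp}$ of $\KK$ rather than at the level of $g(\ii;\ppi)$, but the content is identical.
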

	\begin{proof}
		From Theorem \ref{thm:maxentr}, there exists a vector $\KK = (K_1,\dots,K_d)$ of $d$ independent Bernoulli random variables with marginal means $\ppi = (\pi_1,\dots,\pi_d)$ such that $f^H$ is given by \eqref{eq:ConditionningOneValue} or by \eqref{eq:ConditionningTwoValues}.
		The pgf of $\KK$ is given by
		\begin{equation} \label{eq:easteregg}
			\mathcal{P}^{\perp}(\zz) =  \prod_{j=1}^d (\pi_j z_j + 1 - \pi_j) = \prod_{j=1}^d (1-\pi_j) \sum_{\ii \in \{0,1\}^d} \aa^{\ii}\zz^{\ii},
		\end{equation}
		where $a_j = \frac{\pi_j}{1-\pi_j}$, for $j \in \{1,\dots,d\}$, and $\aa = (a_1,\dots,a_d)$.
		Since $f^H$ is the pmf of $\KK$ conditioned to taking values on the set $\mathcal{A}^*$, we have that its pgf $\PPP^H$ is proportional to the terms of $\PPP^{\perp}$ of order $m$ if $p_{\bullet} = m$, or of orders $m$ and $m+1$ if $p_{\bullet} \in (m,m+1)$. $\PPP^H$ is normalized such that the sum of the coefficients is one.
		Therefore, we have \eqref{eq:pgf_maxentr}, with $a_j = \frac{\pi_j}{1-\pi_j}$, for $j \in \{1,\dots,d\}$.
	\end{proof}
	
	\begin{example}
		Let $a_1, a_2,a_3 > 0$. The expression of $\mathcal{P}^{\perp}$ defined in \eqref{eq:easteregg} is given by
		\begin{equation*}
			\begin{split}
				\mathcal{P}^{\perp}(z_1,z_2,z_3) &\propto  (a_1z_1 + 1) (a_2z_2 + 1) (a_3z_3 + 1)
				\\
				&=
				1 + a_1z_1 + a_2z_2 + a_1a_2z_1z_2 + a_3z_3 + a_1a_3z_1z_3 + a_2a_3z_2z_3 + a_1a_2a_3z_1z_2z_3
				\\
				&= \sum_{\ii \in \{0,1\}^3} \aa^{\ii}\zz^{\ii}.
			\end{split}
		\end{equation*}
		Consider the Fréchet class $\mathcal{B}_3(p_1,p_2,p_3)$ with $p_1+p_2+p_3 \in (1,2)$.
		By Theorem \ref{thm:maxentr}, the maximum entropy pmf $f^H \in \mathcal{B}_3^{\Sigma}(p_1,p_2,p_3)$ is the conditional Bernoulli pmf $f^{(1+)}_{\ppi^*}$, with $\ppi^*$ the solution to the optimization problem in \eqref{eq:MaxProblem} subject to the three constraints specified by \eqref{eq:constraints}.
		Let $a_j^* = \frac{\pi^*_j}{1-\pi^*_j}$, for $j=1,2,3$. Then, the pgf associated to $f^H$ is given by
		\begin{equation*}
			\mathcal{P}^H(\zz) \propto a_1^* z_1 + a_2^* z_2 + a_1^* a_2^* z_1z_2 + a_3^* z_3 + a_1^* a_3^* z_1z_3 + a_2^* a_3^* z_2z_3
			= \sum_{\ii \in \mathcal{A}^*}(\aa^*)^{\ii}\zz^{\ii},
		\end{equation*}
		where $\AAA^* = \AAA_{3,1} \cup \AAA_{3,2}$.
	\end{example}
	
	\begin{remark}
		The pgf $\PPP^H$ associated to the maximum entropy pmf $f^H$ is such that
		\begin{equation*}
			\PPP^H \bigg(\frac{z_1}{a_1},\dots,\frac{z_d}{a_d} \bigg) \propto \sum_{\ii \in \AAA^*} \zz^{\ii} = \EEE_{p_{\bullet}}
		\end{equation*}
		where $\EEE_{p_{\bullet}} = \EEE_{d,p_{\bullet}}$ if $p_{\bullet}$ is integer, otherwise $\EEE_{p_{\bullet}} = \EEE_{d,m} + \EEE_{d,m+1}$ if $p_{\bullet} \in (m,m+1)$, and where $\EEE_{d,m}$ is the elementary symmetric polynomial of order $m$.
		This is another way to show that $f^H$ satisfies the strongly Rayleigh property. As recalled in Remark \ref{ex:lin}, $\EEE_{d,m}$ is stable for every $m \in \{0,\dots,d\}$. Moreover, by Theorem \ref{thm:LinearComb_StrongRayleigh}, $\EEE_{d,m}+\EEE_{d,m+1}$ is stable because $z_{d+1}\EEE_{d,m}+\EEE_{d,m+1} = \EEE_{d+1,m+1}$ is stable.
		Therefore, $\EEE_{p_{\bullet}}$ is a stable polynomial for every $p_{\bullet} \in (0,d)$, and from Point \ref{point:scaling} of Lemma \ref{lemma:StabilityPreservers}, $\PPP^H$ is stable.
	\end{remark}
	In general, the maximal entropy pmf in $\mathcal{B}_d^{\Sigma}(\pp)$ is not the unique pmf in $\mathcal{B}_d^{\Sigma}(\pp)$ with the strongly Rayleigh property.
	Beyond Theorem \ref{thm:maxentr}, there are other approaches to find Bernoulli random vectors that enjoy both the strongly Rayleigh and the $\Sigma$-countermonotonicity properties.
	The following example shows that it is possible to find strongly Rayleigh pmfs in $\BBB_d^{\Sigma}(\pp)$ by slightly modifying $f^H$, using its representation as convex combination of extremal points in \eqref{eq:ConvexCombi}.

	\begin{example}
		Consider the class $\mathcal{B}_3^{\Sigma}(\tfrac{2}{5})$ for which the extremal points $f_{\RRR_k}$, $k=1,2,3$ are reported in Table \ref{ex:contSR+}. The maximum entropy pmf $f^H \in \mathcal{B}_3^{\Sigma}(\tfrac{2}{5})$ is the exchangeable pmf in Table \ref{ex:contSR+}, and it is such that
		\begin{equation} \label{eq:IndepConvexComb}
			f^H(\ii) = \frac{1}{3} f_{\RRR_1}(\ii) + \frac{1}{3} f_{\RRR_2}(\ii) + \frac{1}{3} f_{\RRR_3}(\ii),
		\end{equation}
		for all $\ii \in \{0,1\}^3$.
		In order to find another pmf in the class $\mathcal{B}_3^{\Sigma}(\tfrac{2}{5})$ that satisfies the strongly Rayleigh property, we slightly perturb the coefficient of the convex linear combination in \eqref{eq:IndepConvexComb}.
		Consider the pmf, $\tilde{f} \in \mathcal{B}_3^{\Sigma}(\tfrac{2}{5})$ with $\tilde{f}(\ii)=\sum_{k=1}^3\alpha_{k}f_{\RRR_k}(\ii)$, for $\ii \in \{0,1\}^3$, and  $\alpha_1= 0.333$, $\alpha_2=0.335$, $\alpha_3=0.332$ given in Table \ref{ex:contSR+}.
		The pgf associated to $\tilde{f}$ is $\tilde{\mathcal{P}}(\zz)=0.2664z_1+0.267z_2+0.2666z_3+0.0666z_1z_2+0.067z_1z_3+0.0664 z_2z_3$  satisfies the conditions for the strongly Rayleigh property in  \eqref{eq:CondStrongRayleigh} for all $1 \leq j_1, j_2 \leq 3$ and $\xx \in \RR^3$.
		Therefore, $\tilde{f}$ satisfies the strongly Rayleigh and $\Sigma$-countermonotonicity properties, but it does not have the maximum entropy in the class $\mathcal{B}_3^{\Sigma}(\tfrac{2}{5})$.
		\begin{table}[bt]
			\centering
			\begin{tabular}{c|ccc|cc}
				\toprule
				$\ii$ & $f_{\RRR_1}(\ii)$ & $f_{\RRR_2}(\ii)$ & $f_{\RRR_3}(\ii)$ & $f^H(\ii)$ & $\tilde{f}(\ii)$ \\
				\midrule
				(0,0,0) & 0 & 0 & 0 & 0 & 0 \\ 
				(1,0,0) & 0.2 & 0.2 & 0.4 & $\tfrac{4}{15}$ & 0.2664 \\ 
				(0,1,0) & 0.2 & 0.4 & 0.2 & $\tfrac{4}{15}$ & 0.267 \\ 
				(1,1,0) & 0.2 & 0 & 0 & $\tfrac{1}{15}$ & 0.0666 \\ 
				(0,0,1) & 0.4 & 0.2 & 0.2 & $\tfrac{4}{15}$ & 0.2666 \\ 
				(1,0,1) & 0 & 0.2 & 0 & $\tfrac{1}{15}$ & 0.067 \\ 
				(0,1,1) & 0 & 0 & 0.2 & $\tfrac{1}{15}$ & 0.0664 \\ 
				(1,1,1) & 0 & 0 & 0 & 0 & 0 \\
				\midrule
				Entropy & 1.33217904 & 1.33217904 & 1.33217904 & 1.599014712 & 1.599012963 \\
				\bottomrule
			\end{tabular}
			\caption{\emph{$f_{\RRR_1}$, $f_{\RRR_2}$, and $f_{\RRR_3}$ are the extremal points of $\mathcal{B}_3^{\Sigma}(\tfrac{2}{5})$, $f^H$ is the maximum entropy pmf in $\mathcal{B}_3^{\Sigma}(\tfrac{2}{5})$, and $\tilde{f} \in \mathcal{B}_3^{\Sigma}(\tfrac{2}{5})$ is strongly Rayleigh and $\Sigma$-countermonotonic.}}
			\label{ex:contSR+}
		\end{table}
	\end{example}
	
	A second approach is based on the polarization of a stable polynomial, see Theorem 4.6 and Corollary 4.7 of \cite{borcea2009negative}. 
	However, this construction requires some conditions on the Fréchet class.
	
	\begin{proposition} \label{prop:Polarization}
		Let $\pp \in (0,1)^d$.
		Suppose that there exists a partition $(\Lambda_1,\dots,\Lambda_D)$ of $\{1,\dots,d\}$ with cardinality $(\lambda_1,\dots,\lambda_D)$ such that, for all $h \in \{1,\dots,D\}$, we have that $p_j = \tilde{p}_h \leq \frac{1}{\lambda_h}$ for every $j \in \Lambda_h$.
		For every $h \in \{1,\dots,D\}$, let $\KK_h \in \BBB_{\lambda_h}(\frac{1}{\lambda_h})$ be a random vector with distribution the lower Fréchet bound of $\BBB_{\lambda_h}(\frac{1}{\lambda_h})$.
		Let $\II^H$ be the maximum entropy $D$-dimensional Bernoulli random vector of $\BBB_D^{\Sigma}(\lambda_1\tilde{p}_1, \dots, \lambda_D \tilde{p}_D)$ and assume that $\KK_1,\dots,\KK_D$, and $\II^H$ are independent. 
		Then, the random vector $\boldsymbol{J} = (\JJ_1,\dots,\JJ_D)$, where, for every $h \in \{1,\dots,D\}$,
		\begin{equation*}
			\JJ_h = I_h^H \KK_h,
		\end{equation*}
		has distribution in the class $\BBB_d(\pp)$ and satisfies both the strongly Rayleigh and the $\Sigma$-countermonotonicity properties.
	\end{proposition}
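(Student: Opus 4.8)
The plan is to verify the three claims in order, exploiting that the map $\JJ_h=I_h^H\KK_h$ replaces the $h$-th coordinate of $\II^H$ by a block of $\lambda_h$ coordinates, exactly one of which is switched on precisely when $I_h^H=1$. Throughout I would write $\KK_h=(K_{h,j})_{j\in\Lambda_h}$, so that $J_j=I_h^H K_{h,j}$ for $j\in\Lambda_h$, and set $\zz^{(h)}=(z_j:j\in\Lambda_h)$. First I would settle the membership $\JJ\in\BBB_d(\pp)$: each $J_j$ is a product of two $\{0,1\}$-valued variables, hence Bernoulli; and since the $\lambda_h$ marginal means of $\BBB_{\lambda_h}(\tfrac1{\lambda_h})$ sum to $1$, its lower Fréchet bound is a (unique) distribution whose coordinate sum equals $1$ almost surely, i.e.\ it is one-hot, which together with marginal means $\tfrac1{\lambda_h}$ gives $\sum_{j\in\Lambda_h}K_{h,j}=1$ a.s.\ and $\PPP_{\KK_h}(\zz^{(h)})=\tfrac1{\lambda_h}\sum_{j\in\Lambda_h}z_j$. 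By the assumed independence, for $j\in\Lambda_h$ one gets $E[J_j]=E[I_h^H]\,E[K_{h,j}]=\lambda_h\tilde{p}_h\cdot\tfrac1{\lambda_h}=\tilde{p}_h=p_j$, so $\JJ\in\BBB_d(\pp)$. (In the boundary case $\tilde{p}_h=\tfrac1{\lambda_h}$ one has $\lambda_h\tilde{p}_h=1$, so $I_h^H\equiv1$ and Theorem~\ref{thm:maxentr} is applied only on the non-degenerate blocks; this changes nothing below.)

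Next I would check $\Sigma$-countermonotonicity: using $\sum_{j\in\Lambda_h}K_{h,j}=1$ a.s., one obtains $S_{\JJ}=\sum_{h=1}^D I_h^H\sum_{j\in\Lambda_h}K_{h,j}=\sum_{h=1}^D I_h^H=S_{\II^H}$ almost surely. Since $\II^H\in\BBB_D^{\Sigma}(\lambda_1\tilde{p}_1,\dots,\lambda_D\tilde{p}_D)$, the law of $S_{\II^H}$ is the convex-order minimal law $s_{p_{\bullet}'}$ of \eqref{eq:ExtrPoints_D} with $p_{\bullet}'=\sum_{h=1}^D\lambda_h\tilde{p}_h=\sum_{j=1}^d p_j=p_{\bullet}$; hence $S_{\JJ}$ has law $s_{p_{\bullet}}$, which makes $\JJ$ a $\Sigma_{cx}$-smallest element of $\BBB_d(\pp)$, and therefore $\Sigma$-countermonotonic by Theorem~\ref{thm:EquivalentDef}.

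Finally, the strongly Rayleigh property, which is the substantive part. Conditioning on $\II^H$ and using the independence of $\II^H,\KK_1,\dots,\KK_D$ gives $E[(\zz^{(h)})^{I_h^H\KK_h}\mid\II^H]=\PPP_{\KK_h}(\zz^{(h)})^{I_h^H}$, whence $\PPP_{\JJ}(\zz)=\PPP_{\II^H}\!\big(\PPP_{\KK_1}(\zz^{(1)}),\dots,\PPP_{\KK_D}(\zz^{(D)})\big)$. Since each $\PPP_{\KK_h}(\zz^{(h)})=\tfrac1{\lambda_h}\sum_{j\in\Lambda_h}z_j$ is a linear form with positive coefficients, $\PPP_{\JJ}$ is exactly the iterated polarization of the real stable polynomial $\PPP_{\II^H}$ (each affine variable $y_h$ blown up into the variables $\{z_j:j\in\Lambda_h\}$), and polarization preserves real stability by Theorem~4.6 and Corollary~4.7 of \cite{borcea2009negative}. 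A self-contained alternative that avoids quoting polarization: if $\zz\in\HHH^d$ then $\Im\,\PPP_{\KK_h}(\zz^{(h)})=\tfrac1{\lambda_h}\sum_{j\in\Lambda_h}\Im(z_j)>0$, so the argument of $\PPP_{\II^H}$ lies in $\HHH^D$ and $\PPP_{\JJ}(\zz)\neq0$; since $\PPP_{\JJ}$ has real coefficients and is not identically zero, it is real stable in the sense of Definition~\ref{def:stab}, i.e.\ $\JJ$ satisfies the strongly Rayleigh property.

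I expect the only genuine work to lie in this last step: first deriving the identity $\PPP_{\JJ}=\PPP_{\II^H}\circ(\PPP_{\KK_h})_h$ carefully from the conditional-independence structure together with the one-hot law of $\KK_h$, and then recognizing the substitution of the linear forms $\PPP_{\KK_h}$ as a polarization so that \cite{borcea2009negative} applies, the half-plane computation above being the fallback. The other point to handle cleanly is the degenerate-block case $\tilde{p}_h=\tfrac1{\lambda_h}$, where the hypotheses of Theorem~\ref{thm:maxentr} must be read on the non-degenerate coordinates only.
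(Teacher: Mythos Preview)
Your proposal is correct and follows essentially the same approach as the paper: both use $\sum_{j\in\Lambda_h}K_{h,j}=1$ a.s.\ to get $S_{\JJ}=S_{\II^H}$ for $\Sigma$-countermonotonicity, derive the composition $\PPP_{\JJ}=\PPP_{\II^H}\circ(\PPP_{\KK_h})_h$ by conditioning, and invoke Corollary~4.7 of \cite{borcea2009negative} for stability. You add explicit verification of the marginal means of $\JJ$ (which the paper omits), a self-contained half-plane argument as an alternative to polarization, and a remark on the degenerate block $\tilde p_h=\tfrac{1}{\lambda_h}$; these are refinements rather than a different route.
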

	\begin{proof}
		The Fréchet class $\BBB_{\lambda_h}(\frac{1}{\lambda_h})$ always admits the lower Fréchet bound as a distribution because the sum of the marginal means is equal to 1.
		Since $\mathbb{P}(\sum_{l=1}^{\lambda_h} K_{h,l} = 1) = 1$, we have $\mathbb{P}(\sum_{l=1}^{\lambda_h} J_{h,l} = I_h^H) = 1$ for every $h \in \{1,\dots,D\}$, and 
		$\PP(\sum_{j=1}^d J_j = \sum_{h=1}^D I_h^H) = 1$.
		Therefore, since $\II^H$ is $\Sigma$-countermonotonic, also $\JJ$ is $\Sigma$-countermonotonic.
		We now prove that $\JJ$ satisfies the strongly Rayleigh property.
		For every $h \in \{1,\dots, D\}$, the pgf of $\KK_h$ is
		\begin{equation*}
			\PPP_{\KK_h} (z_1,\dots,z_{\lambda_h})= \frac{1}{\lambda_h} \EEE_{\lambda_h,1}(z_1,\dots,z_{\lambda_h}),
		\end{equation*}
		and, by Theorem \ref{thm:maxentr}, the pgf $\PPP^H(z_1,\dots,z_D)$ of $\II^H$ is stable.
		Due to the independence assumption, it is possible to show that the pgf of $\JJ$ can be written as
		\begin{equation*}
			\PPP_{\JJ}(z_1,\dots,z_d) = \PPP^H(\PPP_{\KK_1}(z_j, j \in \Lambda_1),\dots,\PPP_{\KK_D}(z_j, j \in \Lambda_D)). 
		\end{equation*}
		Thus, by Corollary 4.7 of \cite{borcea2009negative}, since $\PPP^H$ is stable, $\PPP_{\JJ}$ is also stable and $\JJ$ satisfies the strongly Rayleigh property.
	\end{proof}
	We notice that for every Fréchet class $\BBB_d(p)$ with $p \leq \frac{1}{2}$ and $d$ even, the construction in Proposition \ref{prop:Polarization} is always feasible.
	We present the details of this construction in Example \ref{ex:Polarization}.
	\begin{example} \label{ex:Polarization}
		Let $\boldsymbol{I}^H = (I^H_1,\dots,I^H_D)$ be a Bernoulli random vector with the maximum entropy pmf $f^H \in \BBB_D^{\Sigma}(p')$, with $p' \in (0,1)$, that satisfies the $\Sigma$-countermonotonicity property and, by Theorem \ref{thm:maxentr}, the strongly Rayleigh property. 
		Let $\KK_h=(K_{h,1},K_{h,2})$ be a countermonotonic Bernoulli random vector with marginal means $\frac{1}{2}$, for every $h\in \{1,\dots,D\}$. 
		Assume that $\KK_1, \dots, \KK_D$, and $\II^H$ are independent. Define $\JJ = (\JJ_1,\dots,\JJ_D)$, where
		\begin{align*}
			\JJ_h &= I^H_h \KK_h, \quad h \in \{1,\dots,D\}.
		\end{align*}
		We have $\JJ \in \BBB_{d}(p)$, where $d=2D$ and $p = \frac{p'}{2}$, and the joint pgf of $\JJ$ is given by 
		\begin{align*}
			\mathcal{P}_{\JJ}(z_1,\dots,z_{d})
			& =
			E \bigg[ \prod_{j=1}^{d} z_j^{J_j} \bigg] =
			E \bigg[ \prod_{h=1}^D z_{2h-1}^{I^H_h K_{h,1}}  z_{2h}^{I^H_h K_{h,2}} \bigg] 
		\end{align*}
		Conditioning on $\boldsymbol{J}$ and given the assumption of independence, we obtain
		\begin{align*}
			\mathcal{P}_{\JJ}(z_1,\dots,z_d)
			& = 
			E \bigg(
			E \bigg[ \prod_{h=1}^D z_{2h-1}^{I^H_h K_{h,1}}  z_{2h}^{I^H_hK_{h,2}} 
			\bigg\vert
			(I^H_1,\dots,I^H_D)
			\bigg] \bigg)
			= E \bigg(
			\prod_{h=1}^D E \bigg[  z_{2h-1}^{K_{h,1}}z_{2h}^{K_{h,2}} \bigg] ^{I_h^H}
			\bigg)
			\\ 
			& =
			\mathcal{P}^H
			(\mathcal{P}_{\KK_1}(z_1,z_2),\dots,\mathcal{P}_{\KK_D}(z_{d-1},z_d)),
		\end{align*}
		From Example \ref{ex:CaseEqualp}, we can derive the expression for the pgf of $\II^H$ in the following two cases.
		If $p'D$ is integer, we have
		\begin{equation*}
			\PPP^H(z_1,\dots,z_{D}) = \frac{1}{\binom{D}{p'D}} \EEE_{d,p'D}(z_1,\dots,z_{D}),
		\end{equation*}
		while if $p'D \in (m,m+1)$, for some $m \in \{0,\dots,D-1\}$, the expression of $\PPP^H$ is
		\begin{equation*}
			\PPP^H(z_1,\dots,z_{D}) = 
			\frac{1-\pi}{\binom{D}{m}(1-\pi) + \binom{D}{m+1}\pi} \EEE_{D,m}(z_1,\dots,z_{D})
			+
			\frac{\pi}{\binom{D}{m}(1-\pi) + \binom{D}{m+1}\pi} \EEE_{D,m+1}(z_1,\dots,z_{D}),
		\end{equation*}
		where $\pi \in (0,1)$ is the value of the marginal means of the random vector $\KK$ with independent marginals such that $f^H$ is given by \eqref{eq:ConditionningTwoValues}.
		Therefore, since $p'D=pd$, we find
		\begin{align*}
			\mathcal{P}_{\JJ}(z_1,\dots,z_d)
			=
			\frac{1}{\binom{d/2}{pd}} \EEE_{d/2,pd} \bigg(
			\frac{1}{2}(z_1+z_2),\dots,\frac{1}{2}(z_{d-1}+z_d)
			\bigg),
		\end{align*}
		if $pd$ is integer, and
		\begin{align*}
			\mathcal{P}_{\JJ}(z_1,\dots,z_d)
			=
			&\frac{1-\pi}{\binom{d/2}{m}(1-\pi) + \binom{d/2}{m+1}\pi} 
			\EEE_{d/2,m}\bigg( \frac{1}{2}(z_1+z_2),\dots,\frac{1}{2}(z_{d-1}+z_d) \bigg)
			\\
			+
			&\frac{\pi}{\binom{d/2}{m}(1-\pi) + \binom{d/2}{m+1}\pi} 
			\EEE_{d/2,m+1}\bigg( \frac{1}{2}(z_1+z_2),\dots,\frac{1}{2}(z_{d-1}+z_d) \bigg),
		\end{align*}
		if $pd \in (m,m+1)$.
		In both cases, by Proposition \ref{prop:Polarization}, $\PPP_{\JJ}$ is a stable polynomial. $\PPP_{\JJ}$ is not symmetric and $\JJ$ is not exchangeable, thus its pmf $f_{\JJ}$ has not the maximal entropy in $\BBB_d(p)$.
	\end{example}

	\section{Dependence ordering in the class of $\mathcal{B}_d^{\Sigma}(\pp)$} \label{sect:WeakAssociationOrder}
	
	Since two Bernoulli random vectors $\II$ and $\II'$ in the same Fréchet class $\mathcal{B}_d(\pp)$ are not comparable in~$\unlhd$, we study the link between the strongly Rayleigh property and the dependence orders, $\preceq_{w-assoc}$ and $\preceq_{sm}$. 
	Proposition \ref{prop:fHminSM} and its Corollary \ref{corr:Conditioning} consider the simpler case of equal means, i.e. the class $\mathcal{B}_d^{\Sigma}(p)$.
	
	\begin{proposition} \label{prop:fHminSM}
		The maximum entropy pmf $f^H \in \mathcal{B}_d^{\Sigma}(p)$ that satisfies the strongly Rayleigh property is minimal under the supermodular order in the class of exchangeable Bernoulli pmfs with mean $p$.
	\end{proposition}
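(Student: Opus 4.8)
The plan is to reduce the assertion to a one–dimensional convex–order comparison of the sums, exploiting the elementary fact that an exchangeable Bernoulli vector is determined by the law of its sum, and that the expectation of a supermodular function then only depends on that law.

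First I would record the reduction. If $f_{\II}$ is exchangeable and lies in $\BBB_d(p)$, then $f_{\II}(\ii)$ depends only on $i_\bullet$, so $f_{\II}(\ii)=\mathbb{P}(S_{\II}=i_\bullet)/\binom{d}{i_\bullet}$ with $S_{\II}=\sum_j I_j\in\DDD(pd)$; hence, for every $\phi\colon\{0,1\}^d\to\RR$,
\[
E[\phi(\II)]=\sum_{k=0}^d\mathbb{P}(S_{\II}=k)\,\bar\phi(k),\qquad \bar\phi(k):=\frac{1}{\binom{d}{k}}\sum_{\ii\in\AAA_{d,k}}\phi(\ii).
\]
The crux is the claim that \emph{if $\phi$ is supermodular, the sequence $k\mapsto\bar\phi(k)$ is convex on $\{0,\dots,d\}$}, i.e.\ $\bar\phi(k)\le\tfrac12\bar\phi(k-1)+\tfrac12\bar\phi(k+1)$ for $1\le k\le d-1$. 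I would prove this by double counting: for a pair $\uu\in\AAA_{d,k-1}$, $\vv\in\AAA_{d,k+1}$ with $\uu\le\vv$, the two points $\ii,\jj\in\AAA_{d,k}$ with $\uu\le\ii,\jj\le\vv$ satisfy $\ii\wedge\jj=\uu$, $\ii\vee\jj=\vv$, so supermodularity gives $\phi(\ii)+\phi(\jj)\le\phi(\uu)+\phi(\vv)$. Summing over all such pairs and bookkeeping the multiplicities — each $\ii\in\AAA_{d,k}$ sits between $k(d-k)$ such pairs, each $\uu\in\AAA_{d,k-1}$ is the meet of $\binom{d-k+1}{2}$ of them, and each $\vv\in\AAA_{d,k+1}$ the join of $\binom{k+1}{2}$ of them — yields $k(d-k)\sum_{\AAA_{d,k}}\phi\le\binom{d-k+1}{2}\sum_{\AAA_{d,k-1}}\phi+\binom{k+1}{2}\sum_{\AAA_{d,k+1}}\phi$; dividing through and substituting the binomial ratios $\binom{d}{k-1}/\binom{d}{k}=k/(d-k+1)$ and $\binom{d}{k+1}/\binom{d}{k}=(d-k)/(k+1)$ collapses this to exactly the claimed convexity inequality.

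To finish, let $\II'$ be any exchangeable Bernoulli vector with mean $p$, so $S_{\II'}\in\DDD(pd)$. By Example \ref{ex:CaseEqualp}, $\II^H$ is exchangeable and $S_{\II^H}$ has law $s_{pd}$ from \eqref{eq:ExtrPoints_D}, which is the $\preceq_{cx}$–smallest element of $\DDD(pd)$; hence $S_{\II^H}\preceq_{cx}S_{\II'}$. For any supermodular $\phi$, the claim makes $\bar\phi$ convex, so $E[\bar\phi(S_{\II^H})]\le E[\bar\phi(S_{\II'})]$, which by the reduction above is precisely $E[\phi(\II^H)]\le E[\phi(\II')]$. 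Thus $\II^H\preceq_{sm}\II'$ for every such $\II'$, so $f^H$ is the least — and in particular a minimal — element for $\preceq_{sm}$ in the class of exchangeable Bernoulli pmfs of mean $p$.

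I expect the main obstacle to be the convexity claim: showing that averaging a supermodular function over the level sets $\AAA_{d,k}$ produces a convex sequence in $k$. Once that is established, the reduction step is routine, and the final comparison only quotes the already–stated $\preceq_{cx}$–minimality of $s_{p_\bullet}$ in $\DDD(p_\bullet)$.
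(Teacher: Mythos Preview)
Your proof is correct. The paper's own proof is a single citation: it notes that $f^H$ is exchangeable and then invokes Proposition~5 of Frostig (2001), which asserts that among exchangeable Bernoulli vectors with the same mean, supermodular order coincides with convex order of the sums.

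What you have done is give a self-contained proof of (the relevant direction of) that cited result. Your double-counting argument for the convexity of $k\mapsto\bar\phi(k)$ is clean and the multiplicities are exactly right: the identity $\binom{d-k+1}{2}\binom{d}{k-1}=\binom{k+1}{2}\binom{d}{k+1}=\tfrac{1}{2}k(d-k)\binom{d}{k}$ is what makes the inequality collapse to midpoint convexity. The only place one might want a word more is the passage from a convex \emph{sequence} $\bar\phi$ on $\{0,\dots,d\}$ to a convex-order comparison; this is standard (extend by linear interpolation, or note that $\preceq_{cx}$ on $\{0,\dots,d\}$-valued variables can be tested on convex sequences), so it is not a gap. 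Your route buys self-containment and an explicit mechanism; the paper's route buys brevity at the cost of a black-box citation.
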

	\begin{proof}
		The maximum entropy pmf $f^H\in \mathcal{B}_d^{\Sigma}(p)$ is exchangeable. By  Proposition 5 in \cite{frostig2001comparison} it is minimal under the supermodular order in the class of exchangeable pmfs in $\mathcal{B}_d(p)$.
	\end{proof}
	The strongly Rayleigh property is closed under marginalization, but the $\Sigma$-countermonotonicity property is not. We provide an example of a Bernoulli random vector that satisfies the strongly Rayleigh property, that is not $\Sigma$-countermonotonic, and that is different from independence.
	The following Corollary \ref{corr:Conditioning} states that by marginalization of the $d$-dimensional maximum entropy pmf, $f^H_d\in \mathcal{B}_d^{\Sigma}(p)$, we obtain a $k$-dimensional pmf $f_{d|k}^H \in \mathcal{B}_k(p)$ that satisfies the strongly Rayleigh property but that is not $\Sigma$-countermonotonic and greater under the supermodular order than the $k$-dimensional maximum entropy pmf $f_k^H \in \mathcal{B}^{\Sigma}_k(p)$.
	\begin{corollary} \label{corr:Conditioning}
		Let $f_d^H$ be the maximum entropy pmf in the $d$-dimensional class $\mathcal{B}_d^{\Sigma}(p)$. Then, for $k \leq d$, the $k$-dimensional marginal pmf $f_{d|k}^H \in \mathcal{B}_k(p)$ of $f_d^H$ satisfies the strongly Rayleigh property and $f_k^H \preceq_{sm} f_{d|k}^H$, where $f^H_k \in \mathcal{B}^{\Sigma}_k(p)$ is the maximum entropy pmf in dimension $k$.
	\end{corollary}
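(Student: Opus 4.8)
The plan is to obtain both assertions of the corollary by chaining three facts that are already available: closure of the strongly Rayleigh property under marginalization, closure of exchangeability under marginalization, and the supermodular minimality of the maximum entropy exchangeable pmf.

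First I would recall that the maximum entropy pmf $f_d^H \in \mathcal{B}_d^{\Sigma}(p)$ satisfies the strongly Rayleigh property; this is Theorem~\ref{thm:maxentr}, and since for equal means $f_d^H$ is exchangeable it also follows from Proposition~\ref{exch}. By Point~\ref{point:specialization} of Lemma~\ref{lemma:StabilityPreservers}, the strongly Rayleigh property is closed under marginalization, so the $k$-dimensional marginal $f_{d|k}^H$ is strongly Rayleigh as well. Because $f_d^H$ has all one-dimensional marginals equal to Bernoulli$(p)$, the same holds for any of its subvectors, hence $f_{d|k}^H \in \mathcal{B}_k(p)$; this proves the first claim.

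For the second claim, I would note that a subvector of an exchangeable random vector is exchangeable, so $f_{d|k}^H$ is an exchangeable Bernoulli pmf with mean $p$ in dimension $k$. Applying Proposition~\ref{prop:fHminSM} in dimension $k$, the maximum entropy pmf $f_k^H \in \mathcal{B}_k^{\Sigma}(p)$ is minimal under $\preceq_{sm}$ in the class of exchangeable Bernoulli pmfs with mean $p$; since $f_{d|k}^H$ belongs to that class and to the same Fréchet class $\mathcal{B}_k(p)$, we conclude $f_k^H \preceq_{sm} f_{d|k}^H$.

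I do not expect a genuine obstacle here: the argument is a short composition of the three facts above, and the only point deserving a line of care is that $f_{d|k}^H$ really lies in $\mathcal{B}_k(p)$ and in the exchangeable subclass, which is immediate from the marginals of $f_d^H$ being Bernoulli$(p)$ and from exchangeability passing to subvectors. If one additionally wants the accompanying observation that $f_{d|k}^H$ need not be $\Sigma$-countermonotonic, it suffices to compute a single low-dimensional marginal of a joint mix $f_d^H$ --- for instance the two-dimensional marginal of the uniform pmf on $\mathcal{A}_{d,m}$ --- whose component sum has support on more than two consecutive integers.
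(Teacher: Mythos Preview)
Your proposal is correct and follows essentially the same route as the paper's proof: use Lemma~\ref{lemma:StabilityPreservers}\ref{point:specialization} for closure of the strongly Rayleigh property under marginalization, note that exchangeability passes to subvectors, and then invoke Proposition~\ref{prop:fHminSM} in dimension $k$. The only cosmetic difference is that the paper first disposes of the boundary cases $pd \leq 1$ or $pd \geq d-1$ separately (where $f_d^H$ is the lower Fr\'echet bound, which is closed under marginalization), but your uniform argument covers those cases as well.
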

	\begin{proof}
		If $pd \leq 1$ or $pd \geq d-1$, the result is trivial since the lower Fréchet bound is closed under marginalization. Otherwise,
		Lemma \ref{lemma:StabilityPreservers} implies that $f_{d|k}^H \in \mathcal{B}_k(p)$ satisfies the strongly Rayleigh property. 
		Given that $f_d^H$ is exchangeable, $f_{d|k}^H \in \mathcal{B}_k(p)$ is also exchangeable.
		By Proposition \ref{prop:fHminSM}, $f_k^H \preceq_{sm} f_{d|k}^H$.
	\end{proof}

	The following theorem proves that the class $\mathcal{B}_d^{\Sigma}(\pp)$ verifies a minimality property with respect to the supermodular order and, therefore, also with respect to the weak association order. 
	We recall that an antichain in a partially ordered set $(\mathcal{X}, \preceq_*)$ is a subset $A \subseteq \mathcal{X}$ such that any two elements in $A$ are not comparable according to $\preceq_*$. A chain is a subset $C \subseteq \mathcal{X}$ such that if $x,y\in C$ then $x\preceq_*y$ or $y\preceq_*x$ (see Chapter 12 in \cite{west2021combinatorial}).
	\begin{theorem} \label{thm:notorder}
		The following holds true:
		\begin{enumerate}
			\item\label{point1} The class $\mathcal{B}_d^{\Sigma}(\pp)$ is an antichain in the partially ordered set $(\mathcal{B}_d(\pp), \preceq_{sm})$.
			\item \label{point2} For any $f\in \mathcal{B}_d(\pp) \setminus \mathcal{B}_d^{\Sigma}(\pp)$, there exists either $f^{\Sigma}\in \mathcal{B}_d^{\Sigma}(\pp)$ such that $f^{\Sigma}\preceq_{sm}f$ or $f$ and $f^{\Sigma}$ are not comparable under the supermodular order.
		\end{enumerate}
	\end{theorem}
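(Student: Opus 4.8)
The plan is to prove \ref{point1} first by showing that $f \preceq_{sm} g$ for $f, g \in \mathcal{B}_d^{\Sigma}(\pp)$ forces $f = g$; the antichain property then follows at once, since the same implication applied with the roles of $f$ and $g$ swapped rules out $g \preceq_{sm} f$ as well. The starting observation is that for every $A \subseteq \{1,\dots,d\}$ the monomial $\ii \mapsto \prod_{j \in A} i_j$ is supermodular on $\{0,1\}^d$ (for $|A| \geq 2$ this needs a short verification, most easily by writing the monomial as the indicator of the up-set $\{\ii : i_j = 1 \text{ for all } j \in A\}$ and checking the defining inequality case by case on whether $\ii$ and $\kk$ belong to this up-set). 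Hence $f \preceq_{sm} g$ yields $E_f[\prod_{j\in A} I_j] \leq E_g[\prod_{j\in A} I_j]$ for every $A$. The crucial step is then to sum these inequalities over all $A$ of a fixed cardinality $k$: using $\sum_{|A|=k}\prod_{j\in A} i_j = \binom{\sum_{j} i_j}{k}$ for $\{0,1\}$-valued $i_j$, the two sides become $E_f[\binom{S_{\II}}{k}]$ and $E_g[\binom{S_{\II}}{k}]$ with $S_{\II} = \sum_j I_j$. By Theorem \ref{thm:EquivalentDef} both $f$ and $g$ are $\Sigma_{cx}$-smallest, so $S_{\II}$ has the same law $s_{p_{\bullet}}$ of \eqref{eq:ExtrPoints_D} under $f$ and under $g$; consequently the summed inequality is an equality for every $k$, which forces $E_f[\prod_{j\in A} I_j] = E_g[\prod_{j\in A} I_j]$ for each individual $A$. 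Since a Bernoulli pmf is determined by its joint moments $\{E[\prod_{j\in A}I_j]\}_{A \subseteq \{1,\dots,d\}}$ via inclusion--exclusion (M\"obius inversion on the Boolean lattice), this gives $f = g$.

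For \ref{point2}, I would fix $f \in \mathcal{B}_d(\pp)\setminus\mathcal{B}_d^{\Sigma}(\pp)$ and prove the slightly stronger statement that $f \not\preceq_{sm} f^{\Sigma}$ for \emph{every} $f^{\Sigma} \in \mathcal{B}_d^{\Sigma}(\pp)$; since $f \neq f^{\Sigma}$ (because $f \notin \mathcal{B}_d^{\Sigma}(\pp)$), this immediately gives the dichotomy in the statement, namely that each such $f^{\Sigma}$ is either $\preceq_{sm}$-below $f$ or incomparable with $f$, so in particular such an $f^{\Sigma}$ exists. The argument is by contradiction: if $f \preceq_{sm} f^{\Sigma}$, then testing against the supermodular functions $\ii\mapsto \varphi(\sum_j i_j)$ with $\varphi$ convex (supermodular, as recalled after Definition \ref{def:SigmaConvex}) gives $S_f \preceq_{cx} S_{f^{\Sigma}}$. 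But $S_{f^{\Sigma}}$ has law $s_{p_{\bullet}}$, the $\preceq_{cx}$-smallest element of $\mathcal{D}_d(p_{\bullet})$, while $S_f \in \mathcal{D}_d(p_{\bullet})$ because $f$ has the marginals $\pp$, so also $s_{p_{\bullet}} \preceq_{cx} S_f$. Convex order in both directions forces $S_f \stackrel{d}{=} s_{p_{\bullet}}$, so $f$ is $\Sigma_{cx}$-smallest, hence $\Sigma$-countermonotonic by Theorem \ref{thm:EquivalentDef} --- contradicting $f \notin \mathcal{B}_d^{\Sigma}(\pp)$. (That $\mathcal{B}_d^{\Sigma}(\pp) \neq \varnothing$, so the statement is not vacuous, is guaranteed because every Fr\'echet class admits a $\Sigma_{cx}$-smallest element.)

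Both parts reduce to choosing the right supermodular test functions --- monomials $\prod_{j\in A}i_j$ for \ref{point1}, and $\varphi(\sum_j i_j)$ for \ref{point2} --- and then invoking facts already established in the paper, so I do not expect a serious obstacle. The one genuinely new ingredient is the averaging-over-equal-cardinality-subsets trick in \ref{point1}, which upgrades the family of moment inequalities to moment equalities by exploiting that the law of $S_{\II}$ is pinned to $s_{p_{\bullet}}$ throughout $\mathcal{B}_d^{\Sigma}(\pp)$; the only mildly delicate routine point is the supermodularity check for the monomials $\prod_{j\in A} i_j$ with $|A|\ge 2$.
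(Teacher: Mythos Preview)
Your proof of both parts is correct. For Point~\ref{point2} your argument is essentially the paper's own, just spelled out in more detail: the paper simply observes that $\Sigma$-countermonotonicity coincides with $\Sigma_{cx}$-minimality (Theorem~\ref{thm:EquivalentDef}), and that the latter is defined by minimizing the supermodular functions $\ii\mapsto\varphi(\sum_j i_j)$ with $\varphi$ convex --- exactly the test functions you use to derive the contradiction.

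For Point~\ref{point1}, however, your route is genuinely different from the paper's. The paper invokes the characterization of the supermodular order via finite sequences of supermodular transfers (Theorem~2.5.4 of M\"uller, \emph{Duality theory}): if $f,f'\in\mathcal{B}_d^{\Sigma}(\pp)$ both have support in $\mathcal{A}^*=\mathcal{A}_{d,m}\cup\mathcal{A}_{d,m+1}$, then any transfer $\eta(\tfrac12\delta_{\ii}+\tfrac12\delta_{\kk})\to\eta(\tfrac12\delta_{\ii\wedge\kk}+\tfrac12\delta_{\ii\vee\kk})$ that keeps the support inside $\mathcal{A}^*$ must be the identity, so no nontrivial supermodular improvement is possible within the class. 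Your argument instead stays entirely at the level of test functions: you use that the monomials $\prod_{j\in A}i_j$ are supermodular, sum the resulting moment inequalities over all $A$ of a fixed size, and exploit that the sum collapses to an expectation of a function of $S_{\II}$ --- which is pinned to the common law $s_{p_{\bullet}}$ --- to upgrade all inequalities to equalities; M\"obius inversion then gives $f=g$. Your approach is more self-contained (it avoids importing the transfer characterization) and the averaging-to-equality trick is a nice idea; the paper's approach is shorter once the external lemma is granted and perhaps makes the structural reason for the antichain property more transparent.
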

	\begin{proof}
		To prove Point \ref{point1}, let $f, f'\in \mathcal{B}_d^{\Sigma}(\pp)$ and $f\neq f'$.
		By Theorem 2.5.4 in \cite{muller2013duality}, $f\preceq_{sm}f'$ if and only if $f'$ can be obtained from $f$ by a finite number of supermodular transfers.
		Let $\ii, \kk \in \{0,1\}^d$, with $\sum_{j=1}^d i_j \leq \sum_{j=1}^d k_j$, and define $\ww = \ii \wedge \kk$ and $\zz = \ii \vee \kk$.  
		A simple supermodular transfer is a transfer given by $\eta (\frac{1}{2} \delta_{\ii} + \frac{1}{2} \delta_{\kk}) \to \eta (\frac{1}{2} \delta_{\ww} + \frac{1}{2} \delta_{\zz})$, where $\delta_{\xx}$ denotes the point mass in $\xx \in \{0,1\}^d$. If $f, f'\in \mathcal{B}_d^{\Sigma}(\pp)$, they have support on $\AAA^* =\mathcal{A}_m\cup\mathcal{A}_{m+1}$ (see Remark~\ref{remark:astar}). 
		Thus, since we must have $\ww,\zz \in \AAA^*$, and $\ww \preceq \zz$, it follows that $\ww = \ii$ and $\zz = \kk$. 
		Therefore, the only admissible supermodular transfer is the identity. Point \ref{point2} is a direct consequence of Theorem \ref{thm:EquivalentDef}. In fact, the class of $\Sigma$-countermonotonic pmfs is the class of $\Sigma_{cx}$-smallest pmfs defined by minimizing a class of supermodular functions.
	\end{proof}
	
	\begin{remark}
		Notice that Theorem \ref{thm:notorder} cannot be improved. In fact, there exists $f\in \mathcal{B}_d(\pp) \setminus \mathcal{B}_d^{\Sigma}(\pp)$ such that $f$ is not comparable with any $f^{\Sigma} \in \BBB_d^{\Sigma}(\pp)$.
		Indeed, the author of \cite{frostig2001comparison} proposes an algorithm to find sequences of random vectors decreasing in supermodular order, and in Example 3 of \cite{frostig2001comparison}, the algorithm stops at a Bernoulli pmf that is not $\Sigma_{cx}$-smallest. The proposed algorithm considers all supermodular transfers of Theorem 2.5.4 of \cite{muller2013duality}. 
		Thus, the algorithm stops at a pmf that is not comparable with any $f^{\Sigma} \in \BBB_d^{\Sigma}(\pp)$.
	\end{remark}
	
	From Theorem \ref{thm:notorder} it follows that the Bernoulli random vector $\JJ \in \mathcal{B}_d^{\Sigma}(\pp)$ in Proposition \ref{prop:Polarization}, that satisfies the strongly Rayleigh property, is not comparable in supermodular order with the maximum entropy $f^H \in \mathcal{B}_d^{\Sigma}(\pp)$.
	Using $\II^H$ and $\II^{\perp}$ in a given Fréchet class $\BBB_d(\pp)$, we can construct a parametric family of Bernoulli random vectors ordered in supermodular order, from a  $\Sigma$-countermonotonic pmf to independence.
	
	As we have seen in Example~\ref{ex:ctexample1}, since the $\Sigma$-countermonotonicity property does not imply negative association, the inequalities $\boldsymbol{I} \preceq_{w-assoc} \boldsymbol{I}^{\perp}$ and $\boldsymbol{I} \preceq_{sm} \boldsymbol{I}^{\perp}$ do not hold for all $\boldsymbol{I} \in \mathcal{B}_d^{\Sigma}(\pp)$. 
	Obviously, both $\boldsymbol{I}^{H} \preceq_{w-assoc} \boldsymbol{I}^{\perp}$ and $\boldsymbol{I}^{H} \preceq_{sm} \boldsymbol{I}^{\perp}$ hold since the strongly Rayleigh property implies negative association.

	\begin{proposition} \label{prop:FamilyNSDBernoulli}
		For $\alpha \in [0,1]$, let $\II^{(\alpha)}$ be a Bernoulli random vector with joint pmf $f^{(\alpha)}$ defined by 
		\begin{equation}
			f^{(\alpha)}(\ii)
			=
			(1-\alpha) f^H(\ii)
			+ 
			\alpha  f^{\perp}(\ii), 
			\quad \text{for }
			\ii \in \{0,1\}^d,
			\label{eq:FamilyNSDBernoulli}
		\end{equation}
		where $f^{\perp} \in \BBB_d(\pp)$ is the pmf of the Bernoulli random vector $\II^{\perp}$ with independent components and $f^H$ is the maximum entropy pmf of the class $\BBB_d^{\Sigma}(\pp)$.
		Then, the class $\{f^{(\alpha)} : \alpha \in [0,1] \}$ is a chain in the partially ordered set ($\mathcal{B}_d(\pp), \preceq_{sm}$).
		In particular, $f^{(\alpha)}$ satisfies the NSD property for every $\alpha \in [0,1]$, and, for every $0 \leq \alpha_1 \leq \alpha_2 \leq 1$, we have
		\begin{equation} \label{eq:BernoulliIncreasingInSM}
			\II^H \preceq_{sm} \II^{(\alpha_1)} \preceq_{sm} \II^{(\alpha_2)} \preceq_{sm} \II^{\perp},
		\end{equation}
		where $\II^H$ is a Bernoulli random vector with pmf $f^H$.
	\end{proposition}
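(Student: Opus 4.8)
The plan is to reduce everything to the linearity of the map $\alpha \mapsto E[\phi(\II^{(\alpha)})]$ for a fixed supermodular $\phi$, after first checking that all the $f^{(\alpha)}$ lie in the common Fréchet class so that $\preceq_{sm}$ comparisons are meaningful. Since $\BBB_d(\pp)$ is a convex polytope (see \cite{fontana2018representation}) and both $f^H \in \BBB_d^{\Sigma}(\pp) \subseteq \BBB_d(\pp)$ and $f^{\perp} \in \BBB_d(\pp)$, the convex combination $f^{(\alpha)} = (1-\alpha) f^H + \alpha f^{\perp}$ belongs to $\BBB_d(\pp)$ for every $\alpha \in [0,1]$; this takes care of Condition \ref{cond:FC} and makes $\{f^{(\alpha)} : \alpha \in [0,1]\}$ a legitimate subset of the poset $(\BBB_d(\pp), \preceq_{sm})$.

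Next I would record the one substantive input: the vector $\II^H$ with the maximum entropy pmf of $\BBB_d^{\Sigma}(\pp)$ satisfies the strongly Rayleigh property by Theorem \ref{thm:maxentr}, hence it is negatively associated, hence NSD, so $\II^H \preceq_{sm} \II^{\perp}$ (this is also noted in the discussion preceding the statement); equivalently $E[\phi(\II^H)] \le E[\phi(\II^{\perp})]$ for every supermodular $\phi \colon \{0,1\}^d \to \RR$, all such expectations being finite since the state space is finite. Then, for any such $\phi$, by linearity of the expectation in the mixture weight,
\begin{equation*}
	E[\phi(\II^{(\alpha)})] = (1-\alpha)\, E[\phi(\II^H)] + \alpha\, E[\phi(\II^{\perp})],
\end{equation*}
which is an affine, nondecreasing function of $\alpha$ on $[0,1]$. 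Consequently, for $0 \le \alpha_1 \le \alpha_2 \le 1$ we obtain $E[\phi(\II^{(\alpha_1)})] \le E[\phi(\II^{(\alpha_2)})]$ for every supermodular $\phi$, i.e. $\II^{(\alpha_1)} \preceq_{sm} \II^{(\alpha_2)}$; specializing to $\alpha_1 = 0$ (so $\II^{(0)} = \II^H$) or $\alpha_2 = 1$ (so $\II^{(1)} = \II^{\perp}$) yields the chain \eqref{eq:BernoulliIncreasingInSM}, and since any two members of the family are thereby comparable, $\{f^{(\alpha)} : \alpha \in [0,1]\}$ is a chain in $(\BBB_d(\pp), \preceq_{sm})$.

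Finally, the NSD assertion is immediate from the chain: for each $\alpha \in [0,1]$ we have $\II^{(\alpha)} \preceq_{sm} \II^{(1)} = \II^{\perp}$, which is exactly the defining condition of the NSD property. I do not expect a genuine obstacle here; the only ingredient that must be in place is the inequality $\II^H \preceq_{sm} \II^{\perp}$, and that is precisely what the strongly Rayleigh property of $f^H$ from Theorem \ref{thm:maxentr} buys us via the implications strongly Rayleigh $\Rightarrow$ NA $\Rightarrow$ NSD. The convexity of $\BBB_d(\pp)$ handles the Fréchet-class bookkeeping, and the remainder is the elementary monotonicity of a convex combination of two ordered real numbers.
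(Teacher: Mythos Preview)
Your argument is correct and follows essentially the same route as the paper: both use the linearity $E[\phi(\II^{(\alpha)})] = (1-\alpha)E[\phi(\II^H)] + \alpha E[\phi(\II^{\perp})]$ together with the inequality $E[\phi(\II^H)] \le E[\phi(\II^{\perp})]$ coming from the strongly Rayleigh (hence NSD) property of $\II^H$ to conclude monotonicity in $\alpha$. Your extra remark that $f^{(\alpha)} \in \BBB_d(\pp)$ by convexity is a helpful piece of bookkeeping that the paper leaves implicit.
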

	\begin{proof}
		From the definition of $f^{(\alpha)}$ in \eqref{eq:FamilyNSDBernoulli}, it follows that
		\begin{equation*}
			E[\phi(\II^{(\alpha)})] = (1 - \alpha) E[\phi(\II^H)] + \alpha E[\phi(\II^{\perp})]
		\end{equation*}
		for every real function $\phi \colon \{0,1\}^d \to \RR$, such that the expectations are finite.
		Given that $\II^H$ satisfies the strongly Rayleigh property, and also enjoys the NSD property, if $\phi$ is also supermodular, then $E[\phi(\II^H)] \leq E[\phi(\II^{\perp})]$.
		This implies that $E[\phi(\II^{(\alpha)})]$ increases as $\alpha$ increases. Therefore, we have \eqref{eq:BernoulliIncreasingInSM} and $f^{(\alpha)} \in NSD$ for every $\alpha \in [0,1]$.
	\end{proof}
	
	The random vector $\II^{(\alpha)}$ is not $\Sigma$-countermonotonic for $\alpha \in (0,1]$. The following counterexample shows that $\II^{(\alpha)}$ is not necessarily negatively associated; it does not, hence, always satisfy the strongly Rayleigh property.
	
	\begin{example}
		Let us consider the Fréchet class $\BBB_5(\tfrac{3}{10})$.
		The pmf $f^{\perp} \in \BBB_5(\tfrac{3}{10})$ of the vector $\II^{\perp}$ of independent random variables is  
		\begin{equation*}
			f^{\perp}(\ii) = \bigg( \frac{3}{10} \bigg)^{i_1+\dots+i_d} \bigg( \frac{7}{10} \bigg)^{d-(i_1+\dots+i_d)},
			\quad
			\text{for } \ii \in \{0,1\}^d,
		\end{equation*}
		while the maximum entropy pmf $f^H$ in the class $\BBB_5^{\Sigma}(\tfrac{3}{10})$ is the pmf of the Bernoulli exchangeable random vector $\II^H$ of the class $\BBB_5^{\Sigma}(\tfrac{3}{10})$.
		Since $p_{\bullet} = \frac{3}{2} \in (1,2)$, $f^H$ is given by:
		\begin{equation*}
			f^{H}(\ii) = 
			\begin{cases}
				\frac{1}{10}, &\text{for } \ii \in \mathcal{A}_{5,1}
				\\
				\frac{1}{20}, &\text{for } \ii \in \mathcal{A}_{5,2}
				\\
				0, &\text{elsewhere}
			\end{cases}.
		\end{equation*}
		Let us consider the random vector $\II^{(0.9)}$, whose pmf $f^{(0.9)}$ is defined by \eqref{eq:FamilyNSDBernoulli} with $\alpha = 0.9$, and consider the increasing functions $h_1(i_1,i_2) = e^{i_1+2i_2}$ and $h_2(i_3,i_4,i_5) = e^{3i_3+4i_4+6i_6}$.
		It is possible to check that $f^{(0.9)}$ is neither NA nor strongly Rayleigh, since
		\begin{equation*}
			Cov(h_1(I^{(0.9)}_1,I^{(0.9)}_2), h_2(I^{(0.9)}_3,I^{(0.9)}_4,I^{(0.9)}_5)) = 12.6715 > 0.
		\end{equation*}
	\end{example}

	\section{Acknowledgements}
	
	This work was partially supported by the Natural Sciences and Engineering Research Council of Canada (Cossette: 04273; Marceau: 05605). 
	P. Semeraro would like to thank \textit{\'Ecole d'actuariat, Université Laval,} for her delightful stay, during which part of this research was conducted.
	A. Mutti would like to thank \textit{\'Ecole d'actuariat, Université Laval} for the warm hospitality during his research stay.
	H. Cossette and E. Marceau would like to thank \textit{Dipartimento di Scienze Matematiche "G. L. Lagrange" (DISMA), Politecnico di Torino,} for their wonderful stay during which most of the paper was written. 
	
	\bibliographystyle{amsplain}
	\bibliography{ref}

\end{document}